\documentclass[10pt,twoside,english,reqno,a4paper]{amsproc}


\usepackage{geometry}
 \geometry{
 a4paper,
 total={170mm,257mm},
 left=25mm,
 right=25mm,
 top=25mm,
 bottom=25mm,
 }

\newlength{\defbaselineskip}
\setlength{\defbaselineskip}{\baselineskip}

\usepackage[bottom]{footmisc}
\makeatletter
\def\blfootnote{\gdef\@thefnmark{}\@footnotetext}
\makeatother

\usepackage{listings,graphicx,amsmath,varioref,amscd,amssymb,color,bm,stmaryrd}
\usepackage{mathpazo} 
\usepackage[scaled=.95]{helvet} 
\usepackage{courier}
\usepackage{amssymb}
\usepackage{amsmath}
\usepackage{esint}
\usepackage{amsthm}
\usepackage{amscd}
\usepackage{amssymb}
\usepackage{amsfonts}
\usepackage[english]{babel}
\usepackage[latin1]{inputenc}
\usepackage{booktabs}
\usepackage{datetime}



\definecolor{caribbeangreen}{rgb}{0.0, 0.8, 0.6}
\definecolor{darkpastelgreen}{rgb}{0.01, 0.75, 0.24}
\definecolor{green(pigment)}{rgb}{0.0, 0.65, 0.31}
\usepackage[dvipsnames]{xcolor}
\usepackage{hyperref} 
\hypersetup{
colorlinks=true,
linkcolor= red!70!black,
citecolor= orange,
urlcolor=orange, 
}

\addto\extrasenglish{%
}

\addto\extrasenglish{%
}

\addto\extrasenglish{%
}

\usepackage[capitalize,nameinlink,noabbrev,nosort]{cleveref} 

\crefname{section}{\sc Section}{\sc Sections}
\crefname{subsection}{\sc Subsection}{\sc Subsections}
\crefname{appendix}{\sc Appendix}{\sc Appendices}

\crefname{figure}{\sc Figure}{\sc Figures}

\crefname{definition}{\sc Definition}{\sc Definitions}
\crefname{theorem}{\sc Theorem}{\sc Theorems}
\crefname{proposition}{\sc Proposition}{\sc Propositions}
\crefname{corollary}{\sc Corollary}{\sc Corollaries}
\crefname{remark}{\sc Remark}{\sc Remarks}
\crefname{lemma}{\sc Lemma}{\sc Lemmata}
\crefname{theoa}{\sc Theorem}{\sc Theorems}
\crefname{lemb}{\sc Lemma}{\sc Lemmata}


\usepackage[titletoc,toc,title]{appendix}


\usepackage{graphics}
\usepackage{pgfplots}
\pgfplotsset{width=7cm,compat=newest} 
\usepackage{graphics}
\usepackage{tikz}
\usetikzlibrary{shapes.geometric}
\usepackage{tikz-cd}
\usetikzlibrary{patterns}
\usepackage{caption}
\usepackage{subcaption}

\usepackage{float}

\usepackage{pgfplotstable}
\usetikzlibrary{intersections}



\newtheoremstyle{mytheoremstyle} 
{.3em}                    
{0cm}                    
{\slshape}                   
{}                           
{\color{
red!70!black}\large\scshape}             
{.}                          
{.5em}                       
{}  

\theoremstyle{mytheoremstyle}

\newtheorem{theorem}{Theorem}[section]
\newtheorem{proposition}[theorem]{Proposition}

\newtheorem{lemma}[theorem]{Lemma}

\newtheorem{corollary}[theorem]{Corollary}

\newtheorem{definition}[theorem]{Definition}

\newtheorem{remark}[theorem]{Remark}


\setcounter{equation}{0}
\renewcommand{\theequation}{\thesection.\arabic{equation}}
\numberwithin{equation}{section}
\long\def\salta#1{\relax}

\makeatletter




\def\b{\beta}
\def\N{\nabla}
\def\ga{\gamma}
\def\vp{\varphi}
\def\al{\alpha}
\def\om{\omega}
\def\vare{\varepsilon}
\def\eps{\varepsilon}

\def\ro{\rho}
\def\te{\theta}

\newcommand{\de}{\delta}
\newcommand{\lm}{\lambda}
\newcommand{\si}{\sigma}
\newcommand{\D}{\Delta}
\newcommand{\car}[1]{\raise1pt\hbox{$\chi$}_{#1}}
\def\q{\quad}
\def\qq{\qquad}
\def\t{\text}


\newcommand{\integrale}{\int_\Omega}

\newcommand{\ds}{\displaystyle}

\def\dive{\text{div }}
\DeclareMathOperator{\R}{\mathbb{R}}

\def\og{\leavevmode\raise.3ex\hbox{$\scriptscriptstyle\langle\!\langle$~}}
\def\fg{\leavevmode\raise.3ex\hbox{~$\!\scriptscriptstyle\,\rangle\!\rangle$}}


\definecolor{bor}{cmyk}{0.21,0.93,0.86,0.12}
\definecolor{air}{rgb}{0.178, 0.51, 0.51}
\definecolor{air2}{cmyk}{.82, 0., .67, .11}
\definecolor{range}{cmyk}{0,0.599,1,0.188}
\definecolor{range2}{rgb}{0.599,1,0.188}
\definecolor{vio}{rgb}{.4,0,.4}
\definecolor{pan}{rgb}{.17,.87,.64}
\definecolor{seagreen}{rgb}{0.18, 0.55, 0.34}


\def\eqref#1{(\ref{#1})}

\def\theequation{\arabic{section}.\arabic{equation}}


\def\be{\begin{equation}}
\def\ee{\end{equation}}

\def\beac{\be\begin{array}{c}}
\def\eeac{\end{array}\ee}


\def\theequation{\arabic{section}.\arabic{equation}}

\def\og{\leavevmode\raise.3ex\hbox{$\scriptscriptstyle\langle\!\langle$~}}
\def\fg{\leavevmode\raise.3ex\hbox{~$\!\scriptscriptstyle\,\rangle\!\rangle$}}


%





\author[M. Magliocca]{Martina Magliocca}

\address[M. Magliocca]{Dipartimento di Matematica, Universit\`a degli Studi Tor Vergata, Via della Ricerca Scientifica 1, 00133 Rome, Italy. 
\\ \url{magliocc@mat.uniroma2.it}}

\keywords{Nonlinear parabolic equations, Unbounded data, Superlinear gradient, Regularity, Long and short time decay} \subjclass[2000]{35K55, 35B40, 35B65}

\begin{document}


\title[Regularity and decay results of a parabolic problem]{Regularizing effect and decay results for a parabolic problem with repulsive superlinear first order terms}

\begin{abstract}
We want to analyse both regularizing effect and long, short time decay concerning parabolic Cauchy-Dirichlet problems of the type
\begin{equation*}
\begin{cases}
\begin{array}{ll}
u_t-\dive (A(t,x)|\N u|^{p-2}\N u)=\ga |\N u|^q & \text{in}\,\,Q_T,\\ 
u=0  &\text{on}\,\,(0,T)\times\partial\Omega,\\
u(0,x)=u_0(x) &\text{in}\,\, \Omega.
\end{array}
\end{cases}
\end{equation*}
We assume that $A(t,x)$ is a coercive, bounded and measurable matrix, the growth rate $q$ of the gradient term is superlinear but still subnatural, $\ga>0$, the initial datum $u_0$ is an unbounded function belonging to a well precise Lebesgue space $L^\si(\Omega)$ for $\si=\si(q,p,N)$.
\end{abstract}

\maketitle
\tableofcontents

\section{Introduction} 

\setcounter{equation}{0}
\renewcommand{\theequation}{\thesection.\arabic{equation}}

\numberwithin{equation}{section}

The main goal of this work is proving regularity and decay results regarding {solutions} of a class of parabolic equations with superlinear (and subquadratic) growth. \\ The model we consider is the following:
\begin{equation}\label{pb}
\begin{cases}
\begin{array}{ll}
u_t-\dive (A(t,x)|\N u|^{p-2}\N u)=\ga |\N u|^q & \text{in}\,\,Q_T,\\ 
u=0  &\text{on}\,\,(0,T)\times\partial\Omega,\\
u(0,x)=u_0(x) &\text{in}\,\, \Omega,
\end{array}
\end{cases}
\end{equation}
where $\Omega$ is a bounded subset of $\Omega\subset\R^N$, $N\ge 2$, $Q_T=(0,T)\times \Omega$ is the parabolic cylinder, $1<p<N$ and $q<p$.\\
The problem in \eqref{pb} collects all the basic features which motivate our incoming study. Let us spend some words on the elements appearing in \eqref{pb}. \\
The matrix $A(t,x)$ is supposed to be bounded, coercive with only {measurable} coefficients. Then, the lack of regularity in the divergence operator prevents us to apply classical regularity estimates and we need to develop a suitable {nonlinear theory}. In particular, this means that nonlinear operators in divergence form are admitted as well.\\
The initial datum $u_0$ is supposed to be an {unbounded function} belonging to Lebesgue spaces and  the lack of boundedness implies that we cannot invoke maximum principles. \\
The $q$ power of the gradient makes such growth to be \emph{superlinear} (in some sense) but still subnatural $q<p$. To fix ideas, we assume that $q$ is strictly greater than a certain critical value $q_c$ which splits the interval $0<q<p$ between sublinear growths if $0<q\le q_c$ and superlinear ones $q_c<q<p$. \\
Finally, the coefficient $\ga$ is assumed to be strictly positive and then it gives a repulsive nature to the r.h.s.: roughly speaking,  the gradient term in the r.h.s. "fights against" the coercitivity of the l.h.s..\\


Let us give a brief overview on the literature behind problems of \eqref{pb} type.\\
As far as the case with {Laplace operator} in \eqref{pb} is concerned, regularizing effects and long time decays are dealt with in \cite{BDL,BASW,BD,PZ} regarding different notions of solutions (classical, mild and weak ones). In particular, when the initial datum is supposed to be continuous or simply bounded, decay estimates are proved when the gradient rate is positive $q>0$ with both repulsive and attractive nature (i.e. $\ga>0$ and $\ga<0$, respectively, in \eqref{pb}). In particular, \cite[Theorem $1.2$]{BDL} and \cite[Lemma $3.2$]{PZ} show that, if $1<q\le 2$, then the $L^\infty$-norms of both solutions and gradients decay to zero for large times with exponential rates: 
\[
\begin{array}{c}
\ds
\|u(t)\|_{L^\infty(\Omega)}\le K e^{-\lm_1 t},\\
[4mm]\ds
\|\N u(t)\|_{L^\infty(\Omega)}\le K (1+t^{-\frac{1}{2}})e^{-\lm_1 t},
\end{array}
\]
being $\lm_1$ the first eigenvalue of the Laplace operator with homogeneous Dirichlet boundary conditions. Note that this decay is sharp since it is satisfied also by the heat equation. We underline that the authors of \cite{BDL,PZ} can apply Bernstein's estimates, as well as linear semigroup theory or heat kernel estimates, which are not allowed in our general setting because of the assumptions on the matrix $A(t,x)$ in \eqref{pb}. 

\medskip

%
%

As already anticipated, our aim is dealing with {unbounded data in Lebesgue's spaces} 
\begin{equation}\label{d}
u_0\in L^\nu(\Omega)\q \t{for} \q \nu\ge 1,
\end{equation}
and thus, due to the presence of a superlinear term in \eqref{pb}, an explanation on the admissible values of $\nu$ is in order to be given. We underline that the need of taking care of the data regularity is due to the superlinear setting and does not depend on the nature of the superlinearity itself. 
For instance, we refer to \cite{BrCa} where the superlinearity has the form $|u|^q$, $q>1$ and to \cite{M} in our case.\\ 
As shown in \cite{BASW,BD} when $p=2$ and in \cite{M} for $1<p<N$ in a more general context, we need to fix 
$$
\nu\ge \max\left\{ 1,\frac{N(q-(p-1))}{p-q} \right\}
$$
in \eqref{d} in order to get an existence result when a superlinear growth in the gradient term occurs. The same compatibility condition was already observed in \cite{BASW} for the Cauchy problem with $p=2$. We remark that, when $q$ is superlinear, nonexistence counterexamples are proved if $1\le \nu<\frac{N(q-(p-1))}{p-q}$ in \cite[Subsection $3.2$]{BASW} for the Cauchy problem with Laplace operator in \eqref{pb} and in \cite[Section $7$]{MP} as far as the Cauchy-Dirichlet problem with $p=2$ in \eqref{pb} is concerned. 

\medskip

A {nonlinear approach}, aimed at studying the regularity and the behaviour in time of solutions of \eqref{pb} with $p=2$, is contained in \cite{MP}.
In particular, the main step relies on the proof of an a priori estimate for the level set function $G_k(u)=(|u|-k)_+\text{sign}(u)$ which has the form
\[
\begin{array}{c}
\ds
\sup_{t\in (0,T)}\|G_k(u(t))\|_{L^{\frac{N(q-1)}{2-q}}(\Omega)}^{\frac{N(q-1)}{2-q}}+\|\nabla\left[(1+|G_k(u)|)^{\frac{N(q-1)}{2(2-q)}}\right]\|_{L^2(Q_{T})}^2\le M\\
[4mm]\ds
\t{for}\q 2-\frac{N}{N+1}<q<2,
\end{array}
\]
where $k$ is taken large enough to have $\|G_k(u_0)\|_{L^{\frac{N(q-1)}{2-q}}(\Omega)}$ suitable small and with $M=M(\||u_0|\chi_{|u_0|>k}\|_{L^\nu(\Omega)})$.\\
Observing the inequality above, we deduce two important facts: first, we have that (morally) the function $G_k(u)$ acts like a subsolution of the coercive problem
\begin{equation}\label{pp}\tag{$\text{P}_{\text{c}}$}
\begin{cases}
\begin{array}{ll}
u_t-\dive a(t,x,u,\nabla u)=0 \qquad&\text{in}\,\,  Q_T,\\
u=0 \qquad &\text{on}\,\,(0,T)\times\partial\Omega,\\
u(0,x)=u_0(x)\qquad&\text{in}\,\,  \Omega,
\end{array}
\end{cases}
\end{equation} 
and so we expect that $G_k(u)$ inherits the own features of \eqref{pp}; moreover, looking at the energy term, we foresee that a well precise power $|u|^{\beta}$, $\beta=\beta(p,q,N)$, plays a certain role in the study of \eqref{pb}. \\
We are going to comment this last observation. Dealing with a general superlinear setting, then one has to require some regularity on the solutions in order to have the problem well posed. In this sense, we refer to \cite{BarPo,Po3} in the elliptic framework and \cite{MP,LeM} in the parabolic one. More precisely, a comparison result is proved in \cite[Section $6$]{MP} when the solution $u$ belongs to the regularity class
\[
\left\{u\,\,\text{solving \eqref{pb}}: \quad |u|^{\frac{N(q-1)}{2(2-q)}}\in  L^2(0,T;H_0^1(\Omega))
\right\}
\]
while nonuniqueness occurs (see \cite[Appendix A]{LeM}) if
\[
\left\{u\,\,\text{solving \eqref{pb}}: \quad |u|^\ro\in  L^2(0,T;H_0^1(\Omega))\quad\text{with}\,\,\ro<\frac{N(q-1)}{2(2-q)}
\right\}.
\]
See also \cite[Example $1.1$]{GMP} for an analogous observation in the elliptic framework.\\
In the same spirit, we quote \cite{ADAP} where \eqref{pb} is studied with $q=p=2$ and, due to the natural growth, the right class in which one has to study the problem is given by
\[
\left\{u\,\,\text{solving \eqref{pb} with } q=p=2: \quad (e^u-1)\in  L^2(0,T;H_0^1(\Omega))
\right\}.
\]


\medskip

We now recall some well known facts concerning coercive problems. 
Let us focus on \eqref{pp} for a while. We assume that $a(t,x,u,\xi):(0,T)\times\Omega\times \R\times\R^n\to \R^n$ verifies classical Leray-Lions structure conditions (see also \eqref{A}) and $u_0\in L^{\nu}(\Omega)$, $\nu \ge 1$.\\
We stress on the relation between the parameter $p$ and the Lebesgue summability $\nu$ of the initial datum.\\
If we consider values of $p$ that are smaller than the threshold $\frac{2N}{N+\nu}$, $\nu>1$, then  we \emph{cannot expect any regularizing effect} (see \cite[Theorem $1.2$]{Po3}).\\
On the contrary, as $\ds p>\frac{2N}{N+\nu}$ and $\nu\ge 1$, then \emph{a regularizing effect occurs}. Indeed, we have that (see \cite[Theorems $1.3$]{Po3})
\begin{equation}\label{S}
\|u(t)\|_{L^r(\Omega)}\le c\frac{\|u_0\|_{L^\nu(\Omega)}^{h_0}}{t^{h_1}}\qquad\text{a.e.} \,\,t\in (0,T),
\end{equation}
for $c=c(\al,r,p,\nu,N)$,
\begin{equation*}
h_0=\frac{\nu[2N-p(N+r)]}{r[2N-p(N+\nu)]}\qquad\text{and}\qquad h_1=\frac{N(\nu-r)}{r[2N-p(N+\nu)]}.
\end{equation*}
Furthermore, the case $r=\infty$ (\cite[Theorem $1.4$]{Po3}, \cite{Po2} and also \cite{CG} when $p=2$ and $\nu\ge 2$) is admitted and the decay estimate is given by
\begin{equation}\label{U}
\|u(t)\|_{L^\infty(\Omega)}\le c\frac{\|u_0\|_{L^\nu(\Omega)}^{\frac{p\nu}{p(N+\nu)-2N}}}{t^{\frac{N}{p(N+\nu)-2N}}}\qquad\text{a.e.} \,\,t\in (0,T),
\end{equation}
with $c=c(\al,p,\nu,N)$ and where the exponents follow from the limits
\[
\lim_{r\to \infty} h_0=\frac{p\nu}{p(N+\nu)-2N}\qquad\text{and}\qquad \lim_{r\to \infty}h_1=\frac{N}{p(N+\nu)-2N}.
\] 
Note that the above estimates, beyond the regularizing effect, can be read as decay estimates too. However, it is well known that \eqref{U} is not sharp in the sense that it can be refined with respect to great and small values of $t$ in bounded domains (see \cite{Di} and also the last part of \cite[Corollary $2.1$]{Po2} for $p>2$ and \cite{Fri} as $p=2$).\\
Finally, if either $\ds\frac{2N}{N+\nu}\le p<2$ and $\nu>1$ or $\ds\frac{2N}{N+1}< p<2$ and $\nu=1$, then \emph{extinction in finite time occurs} (see \cite[Theorems $1.5$ \&  $1.6$]{Po3}), i.e. there exists a time $\overline{T}$ such that 
\begin{equation}\label{E}
u(t,x)=0\qquad\forall t\ge \overline{T}.
\end{equation}

\section{Assumptions}\label{hp}

Let us present the problem we are going to study  in its generality.\\
We consider the following parabolic Cauchy-Dirichlet problem 
\begin{equation}\label{P} \tag{P}
\begin{cases}
\begin{array}{ll}
u_t-\dive a(t,x,u,\nabla u) =H(t,x,\nabla u) & \text{in}\,\,Q_T,\\ 
u=0  &\text{on}\,\,(0,T)\times\partial\Omega,\\
u(0,x)=u_0(x) &\text{in}\,\, \Omega,
\end{array}
\end{cases}
\end{equation}
assuming that the vectorial valued function $a(t,x,u,\xi):(0,T)\times\Omega\times \R\times\R^n\to \R^n$ satisfies classical Leray-Lions structure assumptions, namely
\begin{subequations}
\makeatletter
\def\@currentlabel{A}
\makeatother
\label{A} 
\renewcommand{\theequation}{A\arabic{equation}}
\begin{equation}\label{A1} 
\exists \alpha>0:\quad
\alpha|\xi|^p\le a(t,x,u,\xi)\cdot\xi,
\end{equation}
\begin{equation}\label{A2} 
\exists \lambda>0:\quad|a(t,x,u,\xi)|\le \lambda[|u|^{p-1}+|\xi|^{p-1}+h(t,x)]\quad\text{where }\,\, h\in L^{p'}(Q_T),
\end{equation}
\begin{equation}\label{A3} 
(a(t,x,u,\xi)-a(t,x,u,\eta))\cdot(\xi-\eta)> 0,
\end{equation}
\end{subequations}
for almost every $(t,x)\in Q_T$, for every $u\in \R$ and for every $\xi$, $\eta$ in $\R^N$ with $\xi\ne\eta$. \\
As far as the r.h.s. is concerned, we assume that it  grows at most as a power of the gradient
\begin{equation}\label{H0}\tag{H}
\exists  \gamma>0\,\,\text{s.t. }\quad
|H(t,x,\xi)|\le \gamma |\xi|^q 
\end{equation}
a.e. $(t,x)\in Q_T$, for all $\xi\in \mathbb{R}^N$, with superlinear $q$ rates belonging to the range
\begin{equation*}
\max\left\{\frac{p}{2},\frac{p(N+1)-N}{N+2} \right\}<q<p.
\end{equation*}
Note that this means that we are requiring $\ds q>\frac{p(N+1)-N}{N+2} $ if $p\ge 2$ and $\ds q>\frac{p}{2}$ as $p\le 2$.

We recall that the compatibility condition between the initial datum $u_0\in L^\nu(\Omega)$ and the $q$ growth of the gradient term is given by
\begin{equation}\label{compcond}
\nu=\max\left\{
1,\si\right\},\q \si=\frac{N(q-(p-1))}{p-q}.
\end{equation}
Then, if we have
\begin{equation}\label{Q1} \tag{$\text{Q}_\si$}
\max\left\{\frac{p}{2},p-\frac{N}{N+1} \right\}<q<p \quad\text{with} \quad 1<p<N
\end{equation}
in \eqref{H0}, we need to ask at least the following summability on the initial datum:
\begin{equation}\label{ID1} \tag{$\text{ID}_{\si}$}
u_0\in L^{\sigma}(\Omega)\quad\text{with}\quad \sigma=\frac{N(q-(p-1))}{p-q}.
\end{equation}
As the $q$ rate gets slower but keeps superlinear, i.e. 
\begin{equation}\label{Q2} \tag{$\text{Q}_1$}
\max\left\{\frac{p}{2},\frac{p(N+1)-N}{N+2} \right\}<q<p-\frac{N}{N+1}\quad\text{with}\quad \frac{2N}{N+1}<p<N,
\end{equation} 
we can consider $L^1(\Omega)$ data (see \eqref{compcond}):
\begin{equation}\label{ID2} \tag{$\text{ID}_1$}
u_0\in L^1(\Omega).
\end{equation}
We require  $\frac{2N}{N+1}<p$ in order to give sense to \eqref{Q2}. \\
The growth rates in \eqref{Q2} would allow us to deal even with measures data, since $\frac{N(q-(p-1))}{p-q}<1$. For further comments in this sense, we refer to \cite[Theorem $2.2$]{BASW}. However, we choose $L^1(\Omega)$ data in order to keep ourselves in the Lebesgue framework.

The particular case $q=p-\frac{N}{N+1}$ with $p>\frac{2N}{N+1}$ will be commented later with its own assumptions and, at this moment, we just observe that such a $q$ value is critical in the sense that it implies  that the value of $\si$ in \eqref{compcond} is exactly $1$. Note that such a $q$ growth represents the changing point between $L^\si(\Omega)$ and $L^1(\Omega)$ data.\\ 

Some words on the relation between the ranges of both $p$ and $q$, aimed at clarifying the data setting, are in order to be given. 
Let  us set
\begin{figure}[H]
\centering
\begin{tabular}{r|l}
\begin{tikzpicture}
\draw [very thick,dashed, color=red!70!black] (-1,0) to (2,0);
\draw [very thick, dashdotted, color=orange] (-1,-.2) to (2,-.2);
\end{tikzpicture}  
& $u_0\in L^\si(\Omega)$,
\\
\begin{tikzpicture}
\draw [very thick, color=yellow] (-1,0) to (2,0);
\end{tikzpicture}
& $u_0\in L^1(\Omega)$.
\end{tabular}
\captionsetup{labelformat=empty}
\caption{Colours legend}
\end{figure}
We sketch out our $q$ intervals on the real lines below with respect to the value of $p$, highlighting the cases $q=p-\frac{N}{N+1}$ and $q=p-\frac{N}{N+2}$ since they represent, respectively, the $L^1(\Omega)$ and the $L^2(\Omega)$ thresholds of the data (i.e. $\nu=\si=1$ if $q=p-\frac{N}{N+1}$ and $\nu=\si=2$ if $q=p-\frac{N}{N+2}$).\\
We have
\begin{figure}[H]
\centering
\begin{tikzpicture}
\draw [thin] (0,0) to (3,0);
\draw [->,thin] (10,0) -- (11,0);
\draw [very thick, dashed, color=red!70!black] (8,0) to (10,0);
\draw [very thick, dashdotted,color=orange] (6,0) to (8,0);
\draw [very thick, color=yellow] (3,0) to (6,0);
\fill (0,0) circle (2pt) node[below] 
{$0$};
\fill (1,0) circle (2pt) node[below] {$\frac{p}{2}$};
\fill (3,0) circle (2pt) node[below] {$\frac{p(N+1)-N}{N+2}$};
\fill (6,0) circle (2pt) node[below] {$p-\frac{N}{N+1}$};
\fill (8,0) circle (2pt) node[below] {$p-\frac{N}{N+2}$};
\fill (10,0) circle (2pt) node[below] {$p$};
\end{tikzpicture}
\captionsetup{labelformat=empty}
\caption{The case $2\le p<N$}
\label{fig:1}
\end{figure}
\begin{figure}[H]
\centering
\begin{tikzpicture}
\draw [thin] (0,0) to (4.75,0);
\draw [->,thin] (10,0) -- (11,0);
\draw [very thick,dashed,  color=red!70!black] (8,0) to (10,0);
\draw [very thick, dashdotted,color=orange] (6,0) to (8,0);
\draw [very thick, color=yellow] (4.75,0) to (6,0);
\fill (0,0) circle (2pt) node[below] 
{$0$};
\fill (3,0) circle (2pt) node[below] {$\frac{p(N+1)-N}{N+2}$};
\fill (4.75,0) circle (2pt) node[below] {$\frac{p}{2}$};
\fill (6,0) circle (2pt) node[below] {$p-\frac{N}{N+1}$};
\fill (8,0) circle (2pt) node[below] {$p-\frac{N}{N+2}$};
\fill (10,0) circle (2pt) node[below] {$p$};
\end{tikzpicture}
\captionsetup{labelformat=empty}
\caption{The case $\frac{2N}{N+1}<p< 2$}
\label{fig:2}
\end{figure}
As far as the cases $p-\frac{2N}{N+1}<\frac{p}{2}$ and $p-\frac{N}{N+2}<\frac{p}{2}$ are concerned, we have
\begin{figure}[H]
\centering
\begin{tikzpicture}
\draw [thin] (0,0) to (6.5,0);
\draw [->,thin] (10,0) -- (11,0);
\draw [very thick,dashed,  color=red!70!black] (8,0) to (10,0);
\draw [very thick,dashdotted, color=orange] (6.5,0) to (8,0);
\fill (0,0) circle (2pt) node[below] 
{$0$};
\fill (3,0) circle (2pt) node[below] {$\frac{p(N+1)-N}{N+2}$};
\fill (5,0) circle (2pt) node[below] {$p-\frac{N}{N+1}$};
\fill (6.5,0) circle (2pt) node[below] {$\frac{p}{2}$};
\fill (8,0) circle (2pt) node[below] {$p-\frac{N}{N+2}$};
\fill (10,0) circle (2pt) node[below] {$p$};
\end{tikzpicture}
\captionsetup{labelformat=empty}
\caption{The case $\frac{2N}{N+2}<p\le \frac{2N}{N+1}$}
\label{fig:3}
\end{figure}
\begin{figure}[H]
\centering
\begin{tikzpicture}
\draw [thin] (0,0) to (8,0);
\draw [->,thin] (10,0) -- (11,0);
\draw [very thick, dashed, color=red!70!black] (8,0) to (10,0);
\fill (0,0) circle (2pt) node[below] 
{$0$};
\fill (3,0) circle (2pt) node[below] {$\frac{p(N+1)-N}{N+2}$};
\fill (5,0) circle (2pt) node[below] {$p-\frac{N}{N+1}$};
\fill (6.85,0) circle (2pt) node[below] {$p-\frac{N}{N+2}$};
\fill (8,0) circle (2pt) node[below] {$\frac{p}{2}$};
\fill (10,0) circle (2pt) node[below] {$p$};
\end{tikzpicture}
\captionsetup{labelformat=empty}
\caption{The case $\frac{2N}{N+\si}<p\le \frac{2N}{N+2}$}
\label{fig:4}
\end{figure}

Looking at the real lines above we deduce that
\begin{equation*}
q>\frac{p}{2}\quad\Leftrightarrow\quad p>\max\left\{\frac{2N}{N+\si},\frac{2N}{N+1}\right\}=\frac{2N}{N+\nu},\q \nu  \t{ in \eqref{compcond}},
\end{equation*}
which, roughly speaking, means that we have an existence result in the superlinear setting if and only if we have $p$ great enough. Note that the $p$ threshold $\frac{2N}{N+\nu}$ is the same as the coercive case \eqref{pp}. 
This means that we \emph{cannot} fall in the range $1<p\le \frac{2N}{N+\nu}$ if we want to keep the superlinear character of \eqref{P}.\\ 
We synthesise the above comments saying that \emph{if we are in the superlinear framework and a solution of \eqref{P}  exists, then such a solution regularizes}.


We collect in the figure below our incoming decay results.
\begin{center}
	\begin{tabular}{ll}
		\begin{minipage}{9.5cm}
			\begin{figure}[H]
				\begin{center}
					\begin{tikzpicture}
					\fill [domain=1:5, pattern=
					north east lines
					, pattern color=air,
					variable=\x](1,1)
					-- plot ({\x}, {10/(5+\x)})
					-- (5,1)
					-- cycle;
					\draw[domain=1:5,smooth,very thick,variable=\x,red!70!black] (1,1) plot ({\x},{10/(5+\x)});
					\draw[very thick, dashed, red!70!black] (1,4.5) -- (5,4.5);
					\draw[very thick, red!70!black] (1,3) -- (5,3);
					\node[right] at (2.8,3.6) {\textcolor{orange}{S - U}};
					\node[right] at (2.5,2.3) {\textcolor{orange}{E - S - U}};
					\node at (2.3,1.7) {\textcolor{red!70!black}{$\frac{2N}{N+\si}$}};
					\draw[->] (1,1) -- (6,1) node[anchor=north west] {$\si$};
					\draw[->] (1,4.5) -- (1,5) node[left] {$p$};
					\draw[very thick,dashdotted,orange!80!black] (1,5/3) -- (1,4.5);
					\draw[very thick, dotted, air!80!black] (1,1) -- (1,5/3);
					\fill (5,1) circle (1.5pt) node[below] {$N$};
					\fill (1,3) circle (1.5pt) node[left] {$2$};
					\fill (1,4.5) circle (1.5pt) node[left] {$N$};
					\fill (1,5/3) circle (1.5pt) node[left] {$\frac{2N}{N+1}$};
					\fill (1,1) circle (1.5pt) node[below] {$1$};
					\fill (1,1) circle (1.5pt) node[left] {$1$};
					\end{tikzpicture}
					\begin{center}
\captionsetup{labelformat=empty}
						\caption{Regularizing effect estimates and long time decays w.r.t. $p$ and $\si$}
					\end{center}
				\end{center}
			\end{figure}
		\end{minipage}
		&
\hspace*{-2cm}
		\begin{minipage}{8cm}
			\begin{figure}[H]
				\begin{center}
					\parbox[b]{\textwidth}{
						\begin{tabbing}
							${\color{orange} S}=$ regularizing effect $L^{\si}-L^r$ for $u$\\ $\qquad$ with $r>\si$ (see\eqref{S})\\
							${\color{orange} U}=$ long time decay $L^{\si}-L^{\infty}$ for $u$ \\  $\qquad$(see \eqref{U})\\
							${\color{orange} E}=$ extinction for $u$ (see \eqref{E})\\
							\begin{tikzpicture}[scale=.5]
							\fill [domain=1:2, pattern=
							north east lines
							, pattern color=air,
							variable=\x](1,1)
							-- plot ({\x}, {10/(5+\x)})
							-- (2,1)
							-- cycle;
							\draw[domain=1:2,smooth,very thick,variable=\x,red!70!black] (1,1) plot ({\x},{10/(5+\x)});
							\end{tikzpicture}
							$=$ nonexistence for superlinear $q$\\
							\,\,
							\begin{tikzpicture}[scale=.5]
							\draw[very thick,dashdotted,orange!80!black] (1,1) -- (1,2);
							\end{tikzpicture}
							\,\,$=$ $ q>\max\left\{\frac{p}{2},\frac{p(N+1)-N}{N+2}\right\}$\\
							\,\,
							\begin{tikzpicture}[scale=.5]
							\draw[very thick,dotted,air!80!black] (1,1) -- (1,2);
							\end{tikzpicture}
							\,\,$=$ $ q\le\max\left\{\frac{p}{2},\frac{p(N+1)-N}{N+2}\right\}$\\
						\end{tabbing}
					}
				\end{center}
			\end{figure}
			
		\end{minipage}
	\end{tabular}
\end{center}

We point out that obtaining decays results in superlinear settings is not obvious: for instance, solutions of the superlinear power problem 
\[
u_t-\D u=|u|^q\qq \t{with} \q q>1
\]
may blow up in finite time (see \cite{BrCaYvRa,P2}).

\subsection*{Notation} We will represent by $c,\,C$ positive constants which may vary from line to line, specifying also its dependence on the parameters. We name $c_S$, $c_P$ and $c_{GN}$, respectively, the Sobolev embedding constant, the Poincar\'{e} constant and the  constant due to the Gagliardo-Nirenberg inequality. We also define the functions $G_k(z)$ and $T_k(z)$ as
\[
G_k(z)=(|z|-k)_+\text{sign}(z)\quad\text{and}\quad T_k(z)=\max\{-k, \min\{k, v\}\}\q\forall k>0 .
\]
Note that, from the above definitions, one has 
\begin{equation}\label{dec}
z=G_k(z)+T_k(z).
\end{equation}

\section{The growth range with $L^\si(\Omega)$ data}\label{Lsi}

\setcounter{equation}{0}
\renewcommand{\theequation}{\thesection.\arabic{equation}}

\numberwithin{equation}{section}

This Section is devoted to the growth case \eqref{Q1} which, we recall,  requires Lebesgue data satisfying at least \eqref{ID1}. \\
We point out that we could split the range \eqref{Q1}  into two main parts with respect to the value of $\si$. Indeed, problem \eqref{P}  admits solutions with finite energy (see \cite[Theorem $4.5$]{M}) if either
\be\begin{array}{c}\label{fe}
\ds
p-\frac{N}{N+2}\le q<p\quad\text{and}\quad \frac{2N}{N+2}< p <N\\[3mm]\ds
\text{or}\\[2mm]\ds
\frac{p}{2}<q<p\quad\text{and}\quad \frac{2N}{N+\si}<p\le \frac{2N}{N+2}
\end{array}\end{equation}
are in force, since such $q$ growths imply that \eqref{ID1} satisfies, respectively, 
 $\si\ge 2$ and $\si >2$. As $q$ gets smaller, so does the value of $\si$ and finite energy solutions are not allowed any more. In particular, when we consider
\begin{equation*}
\max\left\{ \frac{p}{2},p-\frac{N}{N+1}   \right\}<q<p-\frac{N}{N+2} 
\end{equation*}
then we have $1<\si<2$ in \eqref{ID1}.\\
With the aim to deal with the range \eqref{Q1}  at once, we here introduce a notion of solution which is inspired by the renormalized setting.
We first define $\mathcal{T}^{1,p}_0(Q_T)$ as the set of all measurable functions $u:Q_T\to \R$ almost everywhere finite and such that the truncated functions $T_k(u)$ belong to $ L^p(0,T;W^{1,p}_0(\Omega))$ for all $k>0$:
\begin{equation*}
\mathcal{T}^{1,p}_0(Q_T)=\left\{u:Q_T\to \R\quad\text{a.e. finite}\,\,:\,\, T_k(u)\in  L^p(0,T;W^{1,p}_0(\Omega))\quad \forall k>0  \right\}.
\end{equation*}

\begin{definition}\label{defrin1}
We say that a function $u\in {\mathcal{T}^{1,p}_0(Q_T)}$ is a solution of \eqref{P}  if satisfies \eqref{pot} and 
\begin{equation*}
H(t,x,\nabla u)\in L^1(Q_T), 
\end{equation*}
\begin{equation}\label{sr2}
\begin{array}{c}
\ds
-\integrale S(u_0)\vp (0,x)\,dx+\iint_{Q_T}-S(u)\vp_t+a(t,x,u,\N u)\cdot \N (S'(u)\vp)\,dx\,ds\\
[4mm]\ds
=\iint_{Q_T}H(t,x,\N u)S'(u)\vp\,dx\,ds
\end{array}
\end{equation}
for every $S\in W^{2,\infty}(\R)$ such that $S'(\cdot)$ has compact support and for every test function $\vp\in C_c^\infty([0,T)\times \Omega)$ such that $ S'(u)\vp\in L^p(0,T;W^{1,p}_0(\Omega))$ (i.e. $S'(u)\vp$ is equal to zero on $(0,T)\times\partial\Omega$).
\end{definition}

\medskip

Roughly speaking, the notion of renormalized solution moves the attention from the solution $u$ to its truncated function $T_k(u)$, which has now finite energy. For further comments on this notion of solution we refer to \cite{BDGM,BlMu,DMOP,Mu}.  We also underline that, unlike the above references do, we do not require any asymptotic condition on the energy term such as 
\begin{equation*}
	\lim_{n\to \infty}\frac{1}{n}\iint_{\{n\le |u|\le 2n\}}a(t,x,u,\N u)\cdot \N u=0,
\end{equation*}
since it is implied by the regularity class we are going to consider (see \eqref{pot} below).\\
Let us introduce our \emph{regularity class}:
\begin{equation}\label{pot}\tag{RC}
(1+|u|)^{\beta-1}u\in L^p(0,T;W^{1,p}_0(\Omega)),\quad\beta=\frac{\si+p-2}{p}.
\end{equation}
The existence of solutions of \eqref{pb} has been proved in \cite[Theorems $4.5$ \& $5.4$]{M}.
We underline that dealing with solutions which enjoy \eqref{pot} is crucial since it determines the \emph{well posedness class} of \eqref{P}.
We note also that, if $\si\ge 2$ (i.e.  \eqref{fe} hold), then $\beta\ge 1$ and so \eqref{pot} provides us with a stronger information than only knowing $u\in L^p(0,T;W^{1,p}_0(\Omega))$.\\ 

In order to deal with our current framework, we here define the function $\te_n(\cdot)$ as below:
\begin{center}
	\begin{tabular} {ll} 
\hspace*{-.5cm}
		\begin{minipage}{70mm} 
			\begin{figure}[H]
				\centering
				\begin{tikzpicture}
				\draw[->] (0,0) -- (3,0) node[anchor=north west] {$v$};
				\draw[->] (0,0) -- (0,2) node[left] {$\theta_n(v)$};
				\draw[-] (-3,0) -- (0,0);
				\draw[-] (0,-1) -- (0,0);
				\draw[very thick, red!70!black] (-1,1) -- (1,1);
				\draw[very thick, red!70!black] (1,1) -- (2,0);
				\draw[very thick, red!70!black] (-1,1) -- (-2,0);
				\draw[very thick, red!70!black] (2,0) -- (2.5,0);
				\draw[very thick, red!70!black] (-2,0) -- (-2.5,0);
				\draw[very thick, dashed, red!70!black] (2.5,0) -- (3,0);
				\draw[very thick, dashed, red!70!black] (-2.5,0) -- (-3,0);
				\draw[dashed] (1,1) -- (1,0);
				\draw[dashed] (-1,1) -- (-1,0);
				\fill (1,0) circle (1.5pt) node[below] {$n$};
				\fill (-1,0) circle (1.5pt) node[below] {$-n$};
				\fill (0,1) circle (1.5pt) node[above right] {$1$};
				\fill (2,0) circle (1.5pt) node[below] {$2n$};
				\fill (-2,0) circle (1.5pt) node[below] {$-2n$};
				\end{tikzpicture}
\captionsetup{labelformat=empty}
				\caption{The function $\te_n(v)$}
			\end{figure}
		\end{minipage} 
		\begin{minipage}{80mm} 
			\hypertarget{suppcmpt}
			{\begin{equation}\label{suppcmpt}
				\theta_n(v)=
				\begin{cases} 
				\begin{array}{lrl}
				\ds
				1& |v|&\ds \le n,\\
				\ds
				\frac{2n-|v|}{n} & n<|v|&\ds \le 2n,\\
				\ds
				0 &|v|&\ds >2n.
				\end{array}
				\end{cases}
				\end{equation}}
		\end{minipage} 
	\end{tabular} 
\end{center}
\medskip
Note that $\theta_n(v)$ is compactly supported and converging to $1$. 

\subsection{$L^{\si}(\Omega)-L^{\si}(\Omega)$ regularity }

\renewcommand{\theequation}{\thesection.\arabic{equation}}

\numberwithin{equation}{section}

Our first result contains the key point of our next ones and we will refer to this particular step as \emph{the $\de$ argument}. Roughly speaking, we prove that a contraction in the $L^\si$-norm, $\si>1$ as in \eqref{ID1}, holds for the level set function $G_k(u(t))$ provided that this is initially ($t=0$) not too big (i.e., $k$ is large). We underline that, when dealing with the $G_k(\cdot)$ function, \emph{no smallness conditions} on the initial datum are assumed, but eventually it is enough to take a large $k$.  An analogous $\de$ argument has already been used in \cite{MP} where \eqref{P}  is studied under the assumptions in \cref{hp} when $p=2$.  


\medskip

\begin{lemma}\label{lemGkp} 
Assume \eqref{ID1},  \eqref{A1}--\eqref{A2} with $p>\frac{2N}{N+\si}$  and
\eqref{H0} with  \eqref{Q1}. Moreover, let $u$ be a solution of \eqref{P}  in the sense of \cref{defrin1}. Then, there exists a positive value $\delta_0$ such that, for every $k>0$ and for every $\delta<\delta_0$ satisfying
\[
\|G_k(u_0)\|_{L^{\si}(\Omega)}^{\si}<\delta,
\]
we have
\begin{equation*}
\|G_k(u(t))\|_{L^{\sigma}(\Omega)}^{\si}<\delta\quad \forall  t\in[0,T].
\end{equation*}
\end{lemma}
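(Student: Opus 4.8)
The plan is to test the renormalized formulation against a function of $G_k(u)$ that interacts well with the $L^\sigma$ norm. More precisely, I would choose the renormalization $S=S_n$ so that $S_n'=\theta_n$ (with $\theta_n$ as in \eqref{suppcmpt}) and a test function built from $|G_k(u)|^{\sigma-1}\mathrm{sign}(u)$, truncated as needed so that it is admissible; after passing $n\to\infty$ (using the regularity class \eqref{pot} to kill the term on $\{n\le|u|\le 2n\}$, exactly as remarked after \cref{defrin1}), I expect to arrive formally at an inequality of the shape
\begin{equation*}
\frac{1}{\si}\frac{d}{dt}\|G_k(u(t))\|_{L^\si(\Omega)}^\si+\co(\si-1)\iint \frac{|\N G_k(u)|^p}{(1+|G_k(u)|)^{2-p}}\,|G_k(u)|^{\si-2}\,dx\,ds\le \ga(\si-1)\iint |\N u|^q\,|G_k(u)|^{\si-1}\,dx.
\end{equation*}
On the left the coercivity \eqref{A1} produces, up to constants, the full gradient of the power $(1+|G_k(u)|)^{\beta-1}G_k(u)$ appearing in \eqref{pot} (this is where $\beta=\frac{\si+p-2}{p}$ enters); equivalently it controls $\|\N w\|_{L^p}^p$ for $w=(1+|G_k(u)|)^{\beta-1}G_k(u)$, hence by Sobolev a norm of $w$ in $L^p(0,T;L^{p^*}(\Omega))$.

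The core of the argument — ``the $\de$ argument'' — is then to show that the right-hand side is absorbable by the left provided $\|G_k(u_0)\|_{L^\si(\Omega)}^\si$, and hence (a posteriori) $\sup_{t}\|G_k(u(t))\|_{L^\si}^\si$, stays below a threshold $\de_0$. First I would split $|\N u|^q=|\N T_k(u)+\N G_k(u)|^q$ on $\{|u|>k\}$ so that only $|\N G_k(u)|^q$ survives there; then Young's inequality with exponents $p/q$ and $p/(p-q)$ turns $|\N G_k(u)|^q|G_k(u)|^{\si-1}$ into $\eta|\N G_k(u)|^p(\text{weight}) + C_\eta |G_k(u)|^{(\si-1)\frac{p}{p-q}}(\text{weight})$. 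The first piece is absorbed into the coercive term for $\eta$ small. For the second, the choice $\si=\frac{N(q-(p-1))}{p-q}$ in \eqref{compcond} is exactly what makes the exponent of $|G_k(u)|$ match the Sobolev/Gagliardo–Nirenberg exponent coming from the left-hand side energy control of $w=(1+|G_k(u)|)^{\beta-1}G_k(u)$: after re-expressing everything in terms of $w$ and interpolating $L^p(0,T;L^{p^*})\cap L^\infty(0,T;L^{?})$, the remaining integral is bounded by $C\Big(\sup_{t}\|G_k(u(t))\|_{L^\si}^\si\Big)^{1+\kappa}$ for some $\kappa>0$, i.e. it is \emph{superlinear} in the quantity we are estimating.

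From there a standard continuity/bootstrap argument finishes it: set $\varphi(t)=\|G_k(u(t))\|_{L^\si(\Omega)}^\si$, which is (after the usual Steklov-averaging justification) absolutely continuous with $\varphi(0)<\de$; the differential inequality has the form $\varphi'(t)\le -\co\,\Psi(t)+C\,\Psi(t)\,\varphi(t)^{\kappa}$ (or, after integrating, $\sup_{[0,t]}\varphi\le \varphi(0)+C(\sup_{[0,t]}\varphi)^{1+\kappa}$), so choosing $\de_0$ with $C\de_0^{\kappa}<\co$ (resp.\ $C\de_0^{\kappa}<\tfrac12$) and invoking connectedness of $\{t:\varphi(t)<\de_0\}$ keeps $\varphi(t)<\de$ for all $t\in[0,T]$, provided $\de<\de_0$. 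I expect the main obstacle to be purely technical rather than conceptual: making the test function $|G_k(u)|^{\si-1}\mathrm{sign}(u)\,\theta_n(u)$ genuinely admissible in \cref{defrin1} when $1<\si<2$ (so $|G_k(u)|^{\si-1}$ is only Hölder, not Lipschitz) — this forces a secondary truncation $T_j(G_k(u))$, an extra limit $j\to\infty$, and careful handling of the time-derivative term via Landes-type regularization, together with tracking that all constants in the Young/Gagliardo–Nirenberg steps are independent of $k$ so that the threshold $\de_0$ does not degenerate as $k$ grows.
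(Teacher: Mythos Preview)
Your strategy is essentially the paper's: test with a power of $G_k(u)$, use the structure conditions to obtain an energy inequality in which the superlinear right-hand side can be absorbed provided $\sup_t\|G_k(u(t))\|_{L^\si}^\si$ stays below a threshold, and close by a continuity/bootstrap argument. The differences are in execution rather than in the idea.

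Two points are worth sharpening. First, the paper does \emph{not} use Young's inequality followed by Gagliardo--Nirenberg; it applies H\"older directly with the triple $\bigl(\tfrac{p}{q},\tfrac{p^*}{p-q},\tfrac{N}{p-q}\bigr)$ (after a preliminary H\"older with $\bigl(\tfrac{1}{p-q},\tfrac{1}{q-(p-1)}\bigr)$ on the inner integral), which immediately factors the right-hand side as
\[
\gamma c_S\Bigl(\sup_{s\in(0,t)}\|G_k(u(s))\|_{L^\si(\Omega)}^\si\Bigr)^{\frac{p-q}{N}}\int_0^t\|\N\Phi_\eps(G_k(u))\|_{L^p(\Omega)}^p\,ds,
\]
i.e.\ exactly (small factor)$\times$(energy). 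Your Young route can be pushed through, but you must land on the differential form $\varphi'(t)\le -c_0\Psi(t)+C\Psi(t)\varphi(t)^\kappa$ that you list second; the alternate integrated form $\sup_{[0,t]}\varphi\le\varphi(0)+C(\sup_{[0,t]}\varphi)^{1+\kappa}$ is not what a naive Young-plus-interpolation gives (a factor $T$ would appear), so be precise here. Second, for the singularity at $1<\si<2$ the paper avoids a secondary truncation $T_j$ by working with the $\eps$-regularized test function $S_{n,\eps}'(v)=\int_0^{T_n(v)}(\eps+|z|)^{\si-3}|z|\,dz$ and letting $\eps\to0$ at the end; this is slightly cleaner than stacking truncations, though your proposal is also standard.
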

\begin{proof}
We claim that the function $S'(\cdot)=S_{n,\eps}'(\cdot)$ defined as
\[
S_{n,\eps}'({G_k(u)})\vp=\int_0^{{T_n}(G_k(u))}(\eps+|v|)^{\si-3}|v|\,dv\quad\text{with} \quad \vp=1
\]
can be taken in \eqref{sr2}. Indeed, even if it is not compactly supported, the regularity assumption \eqref{pot} allows us to proceed by standard arguments for renormalized solutions (i.e., beginning with ${\te_h(G_k(u))}\,S_{n,\eps}'(G_k(u))\vp$ where $\te_h(\cdot)$ is defined in \eqref{suppcmpt}, recalling \eqref{A1}  and \eqref{H0} and then letting $h\to\infty$). Then, thanks also to the growth assumption in \eqref{H0}, we get
\[
\begin{array}{c}
\ds
\int_{\Omega} S_{n,\eps}(G_k(u(t)))\,dx+{\al}\int_0^t \|\nabla \Phi_\varepsilon (T_n(G_k(u(s))))\|_{L^p(\Omega)}^p\,ds\\
[4mm]\ds
\le \int_{\Omega} S_{n,\eps}(G_k(u_0))\,dx+\gamma  \iint_{Q_t} |\nabla G_k(u)|^q\biggl(\int_0^{G_k(u)}(\varepsilon+|z|)^{\sigma-3}|z|\,dz\biggr)\,dx\,ds,
\end{array} 
\]
where $\Phi_\varepsilon(v)=\int_0^v (\varepsilon+|z|)^{\frac{\sigma-3}{p}}|z|^{\frac{1}{p}}\,dz$.  The definition of $\Phi_\varepsilon(\cdot)$ allows us to estimate the second term in the above r.h.s. as 
\begin{equation*}\begin{array}{c}
\ds
\gamma  \iint_{Q_t} |\nabla G_k(u)|^q\biggl(\int_0^{G_k(u)}(\varepsilon+|z|)^{\sigma-3}|z|\,dz\biggr)\,dx\,ds\\
[3mm]\ds
\le\ga
\iint_{Q_t}|\N \Phi_\varepsilon(G_k(u))|^q\biggl(\int_0^{G_k(u)}(\varepsilon+|z|)^{(\sigma-3)\frac{p-q}{p}}|z|^{\frac{p-q}{p}}\,dz\biggr)\,dx\,ds\\
[3mm]\ds
\le\gamma  \iint_{Q_t} |\nabla \Phi_\varepsilon(G_k(u))|^q 
|\Phi_\varepsilon(G_k(u))|^{p-q}|G_k(u)|^{q-p+1}\,dx\,ds,
\end{array}\end{equation*}
where the last step is due to H\"older's inequality with indices $\left(\frac{1}{p-q},\frac{1}{q-(p-1)}\right)$ (we recall that $q>\max\left\{ \frac{p}{2},p-\frac{N}{N+1} \right\} >p-1$). An application of the H\"older inequality with $\left(\frac{p}{q},\frac{p^*}{p-q},\frac{N}{p-q}\right)$, Sobolev's embedding and the definition of $\si$ (we just recall here that $\si=
{N(q-(p-1))}/{(p-q)}$) give us
\begin{equation*}
\begin{array}{c}
\ds
\int_{\Omega} S_{n,\eps}(G_k(u(t)))\,dx+{\al}\int_0^t \|\nabla \Phi_\varepsilon (T_n(G_k(u(s))))\|_{L^p(\Omega)}^p\,ds\\
[4mm]\ds
\le \int_{\Omega} S_{n,\eps}(G_k(u_0))\,dx+\ga c_S\left(\sup_{s\in (0,t)}\|G_k(u(s))\|_{L^{\sigma}(\Omega)}^\si\right)^{\frac{p-q}{N}}\int_0^t \|\nabla \Phi_\varepsilon (G_k(u(s)))\|_{L^p(\Omega)}^p\,ds.
\end{array}
\end{equation*}
Being $\frac{p-q}{N}<1$ and thanks to \eqref{ID1}  and \eqref{Q1}, we deduce that $\int_{\Omega} S_{n,\eps}(G_k(u(t)))\,dx<\infty$ uniformly in $n$ and for fixed $\eps$. In particular,  we gain the boundedness of $\|G_k(u)\|_{L^\infty(0,T;L^\si(\Omega))}$.\\
 Such a result, combined with \eqref{pot} and \eqref{ID1}, allows us to consider the limit for $n\to \infty$ in the previous inequality getting 
\begin{equation}\label{in}
\begin{array}{c}
\ds
\int_{\Omega} S_{\eps}(G_k(u(t)))\,dx+{\al}\int_0^t \|\nabla \Phi_\varepsilon (G_k(u(s)))\|_{L^p(\Omega)}^p\,ds\\
[4mm]\ds
\le \int_{\Omega} S_{\eps}(G_k(u_0))\,dx+\ga c_S\left(\sup_{s\in (0,t)}\|G_k(u(s))\|_{L^{\sigma}(\Omega)}^\si\right)^{\frac{p-q}{N}}\int_0^t \|\nabla \Phi_\varepsilon (G_k(u(s)))\|_{L^p(\Omega)}^p\,ds,
\end{array}
\end{equation}
where $S_{\eps}(x)=\int_0^{x}\left(\int_0^{y}(\eps+|z|)^{\si-3}|z|\,dz\right)\,dy$. In particular, thanks again to \eqref{pot}, we deduce the convergence to zero of $\int_{\Omega} S_{\eps}(G_k(u(t)))\,dx$ for $k\to\infty$ which provides us the one of $\int_{\Omega}|G_k(u(t))|^{\si}\,dx\to 0$  for  $k\to \infty$. \\ 
Then, the continuity regularity $u\in C([0,T];L^{\si}(\Omega))$ follows combining this last convergence with \cite[Theorem $1.1$]{P1} (which implies that $u\in C([0,T];L^{1}(\Omega))$), the decomposition in \eqref{dec} and by an application of the Vitali Theorem. \\

\noindent
{\it The $\de$ argument.}\\
Let us focus on \eqref{in}. 
We choose a value $\de_0$ such that $\ds 0<
\ga c_S\de_0^{\frac{p-q}{N}}<\al$ and a value $ k_0$ large enough so that
\begin{equation}\label{u0reg}
\|G_k(u_0)\|_{L^{\sigma}(\Omega)}^{\sigma}<\de\quad\forall k\ge k_0
\end{equation} 
for fixed $\de<\de_0$.\\
Moreover, always considering $k\ge k_0$, we set
\begin{equation*}
T^*:=\sup\{s\in [0,T]: \,\|G_k(u(t))\|_{L^{\sigma}(\Omega)}^{\sigma}\le\de \q\forall  t\le s  \}
\end{equation*}
and we have that $T^*>0$ due to the continuity regularity just proved and to \eqref{u0reg}.\\
Choosing $t\le T^*$ in \eqref{in} and recalling the definition of $\de$, we manage to absorb the r.h.s. obtaining
\begin{equation}\label{dis}
\int_{\Omega} S_{\eps}(G_k(u(t)))\,dx+\left(\al-\ga c_S\de^{\frac{p-q}{N}}\right)\iint_{Q_t}|\N \Phi_\varepsilon(G_k(u))|^p\,dx\,ds
\le \int_{\Omega} S_{\eps}(G_k(u_0))\,dx.
\end{equation}
Moreover, since the convergence $S_{\eps}(G_k(u(s)))\underset{\vare\to 0}{\longrightarrow}\frac{|G_k(u(s))|^\sigma}{\sigma(\sigma-1)}$ holds, \eqref{dis} provides us with the contraction
\begin{equation}\label{datotp}
\int_{\Omega}|G_k(u(t))|^{\sigma}\,dx\le \int_{\Omega}|G_k(u_0)|^{\sigma}\,dx\q\forall k\ge k_0.
\end{equation}
The inequality \eqref{datotp} can be extended to the whole interval $[0,T]$ reasoning by contradiction. Let us suppose that $T^*<T$.
Then, the definition of $T^*$ and  \eqref{u0reg} lead to
\[
\de=\int_{\Omega}|G_k(u(T^*))|^{\sigma}\,dx\le \int_{\Omega}|G_k(u_0)|^{\sigma}\,dx<\de\q\forall k\ge k_0
\]
which is in contrast with the definition of $T^*$ because of continuity $u\in C([0,T];L^{\si}(\Omega))$.

\end{proof}

We here state an important consequence which derives from the $\de$ argument above.

\medskip

\begin{corollary}\label{corinftyp} 
Assume  $u_0\in L^{\infty}(\Omega)$, \eqref{A1}--\eqref{A2} and
\eqref{H0} with  \eqref{Q1}. Moreover, let $u$ be a solution of \eqref{P}  in the sense of \cref{defrin1}. Then, we have that $u\in L^{\infty}(Q_T)$. Moreover, the following contraction estimate holds:
\[
\|u(t)\|_{L^{\infty}(\Omega)}\le \|u_0\|_{L^{\infty}(\Omega)}\quad \forall  t\in[0,T].
\]
\end{corollary}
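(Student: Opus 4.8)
The plan is to deduce \cref{corinftyp} directly from \cref{lemGkp} by exploiting that the hypothesis $u_0\in L^\infty(\Omega)$ makes the level-set smallness condition automatic for $k$ large. First I would observe that, since $L^\infty(\Omega)\hookrightarrow L^\si(\Omega)$ on a bounded domain, the datum $u_0$ satisfies \eqref{ID1}, so \cref{lemGkp} applies. Now for any $k\ge \|u_0\|_{L^\infty(\Omega)}$ we have $G_k(u_0)=0$ almost everywhere, hence $\|G_k(u_0)\|_{L^\si(\Omega)}^\si=0<\delta$ for every $\delta<\delta_0$. Thus \cref{lemGkp} yields $\|G_k(u(t))\|_{L^\si(\Omega)}^\si<\delta$ for all $t\in[0,T]$; letting $\delta\to 0^+$ forces $G_k(u(t))=0$ a.e.\ in $\Omega$ for every $t$, i.e.\ $|u(t)|\le k$ a.e.\ in $Q_T$. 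Taking the infimum over admissible $k$ gives $\|u(t)\|_{L^\infty(\Omega)}\le\|u_0\|_{L^\infty(\Omega)}$ for all $t\in[0,T]$, which is exactly the contraction estimate, and in particular $u\in L^\infty(Q_T)$.

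There is one subtlety to address: \cref{lemGkp} is stated for $\delta<\delta_0$ and for $k$ with $\|G_k(u_0)\|_{L^\si(\Omega)}^\si<\delta$, but in the present situation the left side is genuinely zero, so \emph{every} $\delta\in(0,\delta_0)$ is admissible with the \emph{same} $k$. Hence I would fix $k=\|u_0\|_{L^\infty(\Omega)}$ (or any larger value), apply the lemma along a sequence $\delta_j\downarrow 0$, and conclude $\|G_k(u(t))\|_{L^\si(\Omega)}=0$ for each $t$. No largeness of $k$ beyond $\|u_0\|_{L^\infty(\Omega)}$ is needed, and no smallness hypothesis on $u_0$ is invoked, consistently with the remark preceding the lemma. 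I should also note that the constant $\delta_0$ in \cref{lemGkp} depends only on $\alpha$, $\gamma$, $c_S$, $q$, $p$, $N$ and not on $k$, so this repeated application is legitimate.

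The only mild obstacle is bookkeeping about which $p$-range is in force: \cref{lemGkp} explicitly assumes $p>\frac{2N}{N+\si}$, whereas \cref{corinftyp} as stated only lists \eqref{A1}--\eqref{A2}, \eqref{H0} and \eqref{Q1}. I would point out that \eqref{Q1} already encodes $q>\max\{p/2,\,p-\frac{N}{N+1}\}$, and by the discussion in \cref{hp} this is equivalent to $p>\frac{2N}{N+\nu}=\frac{2N}{N+\si}$ in the regime \eqref{Q1} where $\si>1$; so the extra hypothesis of \cref{lemGkp} is automatically satisfied and \cref{lemGkp} may be invoked without further assumptions. Everything else is an immediate passage to the limit, so I expect the proof to be short; the care is entirely in citing \cref{lemGkp} with its hypotheses correctly verified.
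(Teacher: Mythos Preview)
Your proposal is correct and follows essentially the same approach as the paper: take $k=\|u_0\|_{L^\infty(\Omega)}$ so that $G_k(u_0)=0$, and then use \cref{lemGkp} to force $G_k(u(t))=0$ for every $t$. The only cosmetic difference is that the paper invokes the contraction inequality \eqref{datotp} established \emph{inside} the proof of \cref{lemGkp} (which immediately gives $\|G_k(u(t))\|_{L^\si(\Omega)}^\si\le\|G_k(u_0)\|_{L^\si(\Omega)}^\si=0$), whereas you use only the lemma's \emph{statement} and then let $\delta\downarrow 0$; both routes land on the same conclusion in one line.
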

\begin{proof}
	The assertion can be easily deduced taking $k_0=\|u_0\|_{L^\infty(\Omega)}$ in \eqref{datotp}.
\end{proof}

Roughly speaking, this contraction result implies that if one manages to prove that $u$ is bounded at a certain time $\tau$, then it keeps bounded and the $L^\infty$-norm decreases in the time variable. 

\medskip
\begin{lemma}\label{propW11p}
Assume \eqref{ID1},  \eqref{A1}--\eqref{A2} with $p>\frac{2N}{N+\si}$ and
\eqref{H0} with  \eqref{Q1}. Moreover, let $u$ be a solution of \eqref{P} in the sense of \cref{defrin1} and consider $\Phi:\mathbb{R}\to \mathbb{R}$ be a $C^2$ convex function such that
\begin{equation}\label{ga}
\Phi'(0)=0\quad\text{and}\quad \Phi''(\xi)\le c(1+|\xi|)^{\si-2},
\end{equation}
for some constant $c>0$. Then the function $t\to\int_{\Omega}\Phi(u(t))\,dx$ belongs to $W^{1,1}(0,T)$ and satisfies 
\begin{equation}\label{point}
\frac{\text{d}}{\text{d}t}\int_{\Omega}\Phi(u(t))\,dx+\int_{\Omega}a(t,x,u,\nabla u)\cdot \nabla u\, \Phi''(u)\,dx
=\int_\Omega H(t,x,\nabla u)\,\Phi'(u)\,dx
\end{equation}
a.e. in $t\in(0,T)$.
\end{lemma}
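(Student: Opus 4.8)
The strategy is to use the renormalized formulation \eqref{sr2} with the test function built from $\Phi'$, check that it is admissible thanks to the regularity class \eqref{pot}, and then pass to a pointwise (in $t$) identity by a density/absolute-continuity argument.

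\medskip

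First I would verify the admissibility of $S'(u)=\Phi'(u)$ in \eqref{sr2}. Since $\Phi'$ need not have compact support, I would regularize by composing with the cut-off $\te_n(\cdot)$ from \eqref{suppcmpt}, i.e. work with $S_n'(u)=\Phi'(T_{2n}(u))\te_n(u)$ (or simply $\Phi'(u)\te_n(u)$), which does have compact support and belongs to $W^{2,\infty}(\R)$ up to the obvious truncation. Plugging this into \eqref{sr2} with a test function $\vp\in C_c^\infty([0,T)\times\Omega)$, and then letting $\vp\to\car{[0,\tau]}$ in time, yields the integrated identity on $(0,\tau)$. The growth bound $\Phi''(\xi)\le c(1+|\xi|)^{\si-2}$ is exactly what is needed to control the error terms: on the set $\{n<|u|<2n\}$ the extra contribution from $\te_n'$ is $\le \frac{c}{n}\iint_{\{n<|u|<2n\}} a(t,x,u,\N u)\cdot\N u\,(1+|u|)^{\si-1}$, and combining \eqref{A1} with \eqref{pot} — which gives $(1+|u|)^{\b-1}u\in L^p(0,T;W^{1,p}_0(\Omega))$ with $\b=\frac{\si+p-2}{p}$, hence $a(t,x,u,\N u)\cdot\N u\,(1+|u|)^{\si-2}\in L^1(Q_T)$ — shows this tends to $0$ as $n\to\infty$. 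Similarly $H(t,x,\N u)\,\Phi'(u)\te_n(u)\to H(t,x,\N u)\,\Phi'(u)$ in $L^1$ once one checks $H(t,x,\N u)\,\Phi'(u)\in L^1(Q_T)$, which again follows from \eqref{H0}, \eqref{pot} and the growth of $\Phi'$ (here $|\Phi'(u)|\lesssim (1+|u|)^{\si-1}$, and $|\N u|^q(1+|u|)^{\si-1}$ is integrable by the same Hölder/Sobolev bookkeeping used in the proof of \cref{lemGkp}). Passing $n\to\infty$ then gives
\[
\int_\Omega \Phi(u(\tau))\,dx-\int_\Omega\Phi(u_0)\,dx+\iint_{Q_\tau}a(t,x,u,\N u)\cdot\N u\,\Phi''(u)\,dx\,ds=\iint_{Q_\tau}H(t,x,\N u)\,\Phi'(u)\,dx\,ds
\]
for a.e. $\tau\in(0,T)$, where on the left the term $\iint a\cdot\N(\Phi'(u))=\iint a\cdot\N u\,\Phi''(u)$ has been rewritten using the chain rule valid on each truncation.

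\medskip

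Second, from this integral identity the $W^{1,1}(0,T)$ regularity and the pointwise equation \eqref{point} follow: the map $\tau\mapsto\int_\Omega\Phi(u(\tau))\,dx$ equals the constant $\int_\Omega\Phi(u_0)\,dx$ plus the integral over $(0,\tau)$ of the $L^1(0,T)$ function $s\mapsto \int_\Omega H(t,x,\N u)\Phi'(u)\,dx-\int_\Omega a(t,x,u,\N u)\cdot\N u\,\Phi''(u)\,dx$, hence is absolutely continuous with the stated derivative a.e. Here one must make sure the left-hand side is well defined pointwise: continuity $u\in C([0,T];L^\si(\Omega))$ (established in the proof of \cref{lemGkp}) together with $|\Phi(u)|\lesssim (1+|u|)^\si$ and a Vitali argument gives $\tau\mapsto\int_\Omega\Phi(u(\tau))\,dx\in C([0,T])$, so the identification of the weak derivative is legitimate.

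\medskip

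The main obstacle is the justification of the limit $n\to\infty$ in the truncated identity — specifically, showing that the "renormalization defect" $\frac{1}{n}\iint_{\{n<|u|<2n\}}a(t,x,u,\N u)\cdot\N u\,\Phi''(u)$-type term vanishes. This is precisely the point where the regularity class \eqref{pot} is indispensable (the excerpt already flags that \eqref{pot} is what makes the usual asymptotic-energy condition automatic), and the calibration $\b=\frac{\si+p-2}{p}$ is what makes $(1+|u|)^{\si-2}\,a\cdot\N u$ land in $L^1$: writing $a\cdot\N u\,(1+|u|)^{\si-2}\ge \al|\N u|^p(1+|u|)^{\si-2}=c'|\N[(1+|u|)^{\b-1}u]|^p$ and invoking \eqref{A2} for the upper bound, integrability over $Q_T$ is equivalent to \eqref{pot}. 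Once this defect is controlled, everything else is routine Hölder/Sobolev estimation of the type already performed in \cref{lemGkp}.
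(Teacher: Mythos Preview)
Your proposal is correct and follows essentially the same approach as the paper: the paper's (omitted) proof refers to \cite[Lemma 3.1]{MP} and sketches precisely the strategy you describe, namely inserting $S'(z)\vp=\te_h(z)\Phi'_n(z)$ in the renormalized formulation \eqref{sr2}, using \eqref{A1}--\eqref{H0} together with the growth bound $\Phi''(\xi)\le c(1+|\xi|)^{\si-2}$ (which plays the role of \eqref{pot}) to pass $h\to\infty$, and then reading off the $W^{1,1}(0,T)$ regularity. Your write-up is a faithful expansion of this sketch, including the key observation that the renormalization defect vanishes because $a(t,x,u,\N u)\cdot\N u\,(1+|u|)^{\si-2}\in L^1(Q_T)$ is equivalent to \eqref{pot}.
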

\begin{proof}
We omit the proof since it is very similar to the one proposed in \cite[Lemma $3.1$]{MP}. We just observe that the growth assumption \eqref{ga} plays the role of \eqref{pot}. In particular, \eqref{ga} is needed to justify the choice of $S_n'(\cdot)\vp=\Phi_n'(\cdot)$, $\vp=1$, in \eqref{sr2} (i.e., we begin with $S_n'(z)\vp={\te_h(z)}\Phi_n'(z)$, where $\te_h(\cdot)$ has been defined in \eqref{suppcmpt}; then, thanks to \eqref{A1}, \eqref{H0},  we let $h\to\infty$).

\end{proof}
Here we propose the generalization of \cite[Proposition $3.2$]{MP} in which the $L^\si(\Omega)-L^\si(\Omega)$ long time decay of \eqref{P} is proved with $p=2$ in \eqref{A}, \eqref{ID1} and \eqref{Q1}.

\medskip

\begin{proposition}\label{ex} 
Assume \eqref{ID1},  \eqref{A1}--\eqref{A2} with $p>\frac{2N}{N+\si}$ and 
\eqref{H0} with  \eqref{Q1}. Moreover, let $u$ be a solution of \eqref{P} in the sense of \cref{defrin1}. Then, for $k$ sufficiently large (say $k\ge k_0$ with $k_0$ as in \cref{lemGkp}), we have that
\begin{equation}\label{secp}
\frac{\text{d}}{\text{d}t}\int_{\Omega}|G_k(u(t))|^{\si}\,dx+\frac{\si}{\b^p}\left(\al-\ga c_S\de^{\frac{p-q}{N}}\right)
\int_{\Omega}  |\nabla [|G_k(u)|^{\beta}] |^p  \,dx
\le 0
\end{equation}
a.e. $t\in(0,T)$, for all $k\ge k_0$ (see \eqref{dis}).\\
Furthermore, for $\lm=\frac{c_S\,\si}{\b^p}\left(\al-\ga c_S\de^{\frac{p-q}{N}}\right)|\Omega|^{-\frac{N(p-2)+p\si}{N\si}}$ and $k\ge k_0$, we have that 
\begin{itemize}
\item  if $2<p<N$, then $\|G_k(u(t))\|_{L^{\si}(\Omega)}$ decreases in the time variable and the following polynomial decay holds:
\[
\|G_k(u(t))\|_{L^{\si}(\Omega)}\le \biggl( \|G_k(u_0)\|_{L^{\si}(\Omega)}^{-(p-2)}
+\lm\frac{p-2}{\si} t\biggr)^{-\frac{1}{p-2}}\q\forall  t\ge 0;
\] 
\item if $\frac{2N}{N+\si}< p<2$, there exists a positive time $\overline{T}$ such that
\[
G_k(u)=0\qquad\forall t\ge \overline{T}.
\]
In particular, such a value $\overline{T}$ is given by 
\[
\overline{T}=\frac{\si}{(2-p)\lm}\|G_k(u_0)\|_{L^{\si}(\Omega)}^{2-p}.
\]
\end{itemize}
\end{proposition}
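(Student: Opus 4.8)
The plan is to establish the differential inequality \eqref{secp} first and then integrate it, distinguishing the two ranges of $p$.

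\medskip

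\textbf{Step 1: the differential inequality.} I would apply \cref{propW11p} with $\Phi(\xi)=\Phi_k(\xi)$ a smooth convexification of $\frac{1}{\si(\si-1)}|G_k(\xi)|^\si$ — that is, a $C^2$ convex function with $\Phi_k'(0)=0$, $\Phi_k(\xi)=0$ for $|\xi|\le k$, and $\Phi_k''(\xi)\le c(1+|\xi|)^{\si-2}$, so that assumption \eqref{ga} holds. (One can obtain $|G_k(u)|^\si$ itself as a monotone limit of such $\Phi_k$ via a further regularization near $|\xi|=k$; alternatively, one works with $S_\eps$ as in \cref{lemGkp} and sends $\eps\to0$ at the end.) The identity \eqref{point} then gives, using coercivity \eqref{A1} to bound the diffusion term from below by $\al\int_\Omega|\N G_k(u)|^p|G_k(u)|^{\si-2}\,dx$ up to the constant $\si-1$, and the growth assumption \eqref{H0} on the right, exactly the computation already carried out inside the proof of \cref{lemGkp}: the gradient term on the right is absorbed into a fraction $\ga c_S\de^{(p-q)/N}$ of the diffusion term, using $\|G_k(u(t))\|_{L^\si(\Omega)}^\si<\de$ from \cref{lemGkp} (valid for $k\ge k_0$). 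Rewriting $|\N G_k(u)|^p|G_k(u)|^{\si-2}$ as $\b^{-p}|\N[\,|G_k(u)|^\b\,]|^p$ with $\b=\frac{\si+p-2}{p}$ yields \eqref{secp}.

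\medskip

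\textbf{Step 2: from \eqref{secp} to an ODE.} Set $y(t)=\|G_k(u(t))\|_{L^\si(\Omega)}^\si=\int_\Omega|G_k(u)|^\si\,dx$. I would estimate the dissipation term in \eqref{secp} from below in terms of $y$. Applying the Sobolev embedding $\|w\|_{L^{p^*}(\Omega)}\le c_S\|\N w\|_{L^p(\Omega)}$ to $w=|G_k(u)|^\b$ gives $\|\N[|G_k(u)|^\b]\|_{L^p(\Omega)}^p\ge c_S^{-p}\||G_k(u)|^\b\|_{L^{p^*}(\Omega)}^p = c_S^{-p}\|G_k(u)\|_{L^{\b p^*}(\Omega)}^{\b p}$. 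Since $\b p^* = \frac{(\si+p-2)N}{N-p}$, and comparing with $\|G_k(u)\|_{L^\si(\Omega)}$ via Hölder on the bounded domain $\Omega$ (noting $\b p^* \ge \si$ exactly when $p\ge 2$, equivalently $\b\ge1$, and the reverse for $p<2$), one produces a power-law lower bound $\int_\Omega|\N[|G_k(u)|^\b]|^p\,dx \ge c\, y^{\,\b p/\si}\,|\Omega|^{-(\cdots)}$, with the precise exponent $\frac{\b p}{\si}=\frac{\si+p-2}{\si}=1+\frac{p-2}{\si}$. This turns \eqref{secp} into $y' + \lm\, y^{1+\frac{p-2}{\si}} \le 0$ with exactly the $\lm$ stated (the normalization constant $|\Omega|^{-\frac{N(p-2)+p\si}{N\si}}$ is the bookkeeping from the Hölder step).

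\medskip

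\textbf{Step 3: integrate.} For $2<p<N$ we have $\frac{p-2}{\si}>0$, so the ODE $y'\le -\lm y^{1+\frac{p-2}{\si}}$ integrates (dividing by $y^{1+(p-2)/\si}$ and integrating, which is legitimate on any interval where $y>0$; where $y$ hits $0$ the bound is trivially true thereafter) to $y(t)^{-(p-2)/\si}\ge y(0)^{-(p-2)/\si}+\lm\frac{p-2}{\si}t$, i.e. $\|G_k(u(t))\|_{L^\si}\le(\|G_k(u_0)\|_{L^\si}^{-(p-2)}+\lm\frac{p-2}{\si}t)^{-1/(p-2)}$; monotone decrease of $y$ is immediate from $y'\le0$. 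For $\frac{2N}{N+\si}<p<2$ we have $\kappa:=\frac{p-2}{\si}\in(-1,0)$ (the lower bound on $p$ guarantees $\kappa>-1$, i.e. the exponent $1+\kappa$ stays positive so the dissipation really is a bona fide power of $y$), hence $y'\le -\lm y^{1+\kappa}$ with $0<1+\kappa<1$; integrating $\frac{d}{dt}y^{-\kappa}=-\kappa y^{-\kappa-1}y'\ge \lm(-\kappa)$ while $y>0$ gives $y(t)^{-\kappa}\le y(0)^{-\kappa}-\lm|\kappa|t$ (note $-\kappa=|\kappa|=\frac{2-p}{\si}$), and since the right side reaches $0$ at $\overline T=\frac{y(0)^{|\kappa|}}{\lm|\kappa|}=\frac{\si}{(2-p)\lm}\|G_k(u_0)\|_{L^\si}^{2-p}$, and $y$ cannot become negative, we must have $y\equiv0$ for $t\ge\overline T$, which is the claimed extinction.

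\medskip

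\textbf{Main obstacle.} The delicate point is Step 1 — rigorously justifying the choice of renormalization $\Phi_k$ approximating $\frac{1}{\si(\si-1)}|G_k(u)|^\si$ within the framework of \cref{defrin1}, i.e. controlling the truncation/regularization limits so that the formal computation of \cref{lemGkp} (absorption of the superlinear gradient term) transfers to the pointwise-in-time identity of \cref{propW11p}. Once one has \eqref{secp} cleanly, Step 2 (the Sobolev–Gagliardo–Nirenberg/Hölder chain producing the correct exponent $1+\frac{p-2}{\si}$ and the sharp constant $\lm$) is a careful but routine interpolation, and Step 3 is an elementary ODE integration; the only subtlety there is tracking that $\frac{2N}{N+\si}<p$ is precisely what keeps $1+\frac{p-2}{\si}>0$ so that the finite-time extinction mechanism is available rather than a mere algebraic decay.
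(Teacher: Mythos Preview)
Your approach is essentially the paper's: regularize via $S_\eps$ (or a smooth $\Phi_k$) inside \cref{propW11p}, absorb the superlinear term exactly as in \cref{lemGkp} to get \eqref{secp}, then turn the dissipation into a power of $y=\|G_k(u)\|_{L^\si}^\si$ via Sobolev plus Lebesgue inclusion and integrate the resulting ODE. Step~1 and Step~3 match the paper's argument.

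There is one genuine slip in Step~2, however. Your parenthetical ``$\beta p^*\ge\si$ exactly when $p\ge2$, equivalently $\beta\ge1$, and the reverse for $p<2$'' is wrong on both counts: $\beta\ge1$ is equivalent to $\si\ge2$ (not $p\ge2$), and more importantly
\[
\si<\beta p^*\quad\Longleftrightarrow\quad p>\frac{2N}{N+\si},
\]
which is the standing hypothesis. Hence $\beta p^*>\si$ holds \emph{uniformly} across both ranges $p>2$ and $\frac{2N}{N+\si}<p<2$, and the Lebesgue inclusion $\|G_k(u)\|_{L^{\beta p^*}}\ge |\Omega|^{-(\frac1\si-\frac{1}{\beta p^*})}\|G_k(u)\|_{L^\si}$ gives the needed lower bound in both cases with no reversal. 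If the direction truly flipped for $p<2$ as you suggest, you could not produce a lower bound for the dissipation in terms of $y$, and the extinction argument would collapse. So the proof still goes through, but only because your stated dichotomy is false; fix the parenthetical and the argument is clean.
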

\begin{proof}
The inequality in \eqref{secp} follows combining \cref{propW11p} with \cref{lemGkp}. Indeed, invoking \cref{propW11p} with $\Phi_\varepsilon'(G_k(u))=(\si-1)\int_0^{G_k(u)}(\eps+w)^{\si-2}\,dw$ and reasoning as in \cref{lemGkp} (see \eqref{dis}), we obtain
\[
\frac{\text{d}}{\text{d}t}\int_{\Omega} \Phi_\eps({G_k(u(t))})\,dx+\left(\al-\ga c_S\de^{\frac{p-q}{N}}\right)\int_{\Omega}|\N \Phi_\varepsilon(G_k(u))|^p\,dx 
\le 0
\]
and \eqref{secp} is recovered once we let $\eps$ vanish.\\

We go further observing that, by definitions of $\si$ and $\beta$, we have that 
\[
\si< \beta p^*\,\, \Leftrightarrow\,\, p> \frac{2N}{N+\si}
\]
and thus, thanks to Sobolev's embedding and to Lebesgue's spaces inclusion, we can estimate from below as follows:
\begin{align}
0&\ge\frac{\text{d}}{\text{d}t}\|G_k(u(t))\|_{L^{\si}(\Omega)}^{\si}+\frac{\si}{\b^p}\left(\al-\ga c_S\de^{\frac{p-q}{N}}\right)\|\nabla [|G_k(u(t))|^{\beta}]\|_{L^p(\Omega)}^p\notag\\
&\ge \frac{\text{d}}{\text{d}t}\|G_k(u(t))\|_{L^{\si}(\Omega)}^{\si}+\lm\|G_k(u(t))\|_{L^{\si}(\Omega)}^{\beta p}\label{betap}
\end{align}
for every $k\ge k_0$ and where 
$\lm=\frac{c_S}{\b^p}\left(\al-\ga c_S\de^{\frac{p-q}{N}}\right)|\Omega|^{-\frac{N(p-2)+p\si}{N\si}}$. We  set
\[
y(s)=\|G_k(u(s))\|_{L^{\si}(\Omega)}^{\si}
\]
and rewrite \eqref{betap} as
\begin{equation}\label{dis1}
y'(s)+\lm y(s)^{\frac{\beta p}{\si}}\le 0\qquad \forall k\ge k_0.
\end{equation}
We now split the rest of the proof with respect to the cases $p>2$ and $p<2$.

Let $\ds 2<p<N$: in this way, we have that $\frac{\beta p}{\si}=\frac{\si+p-2}{\si}>1$ and then Gronwall's type Lemma (see, e.g., \cite[Lemma $3.1$]{Po2}) provides us with
\[
y(t)\le \biggl(y(0)^{-\frac{p-2}{\si}}+\lambda\frac{p-2}{\si}t\biggr)^{-\frac{\si}{p-2}}\qquad\forall t\in (0,T), \,\, \forall k\ge k_0.
\]
%


Having $\ds\frac{2N}{N+\si}< p<2$ guarantees that $\frac{\beta p}{\si}<1$ and \eqref{dis1} gives us  
\[
y(t)^{\frac{2-p}{\si}}\le y(0)^{\frac{2-p}{\si}}-\lm\frac{2-p}{\si}t\qquad\forall t\in (0,T), \,\, \forall k\ge k_0
\]
from which we deduce that $y(t)=0$ if $t\ge \overline{T}=\frac{\si}{\lm(2-p)}y(0)^{\frac{2-p}{\si}}$. 

The assertions follow recalling the definitions of $y(\cdot)$ and $\lm$.
\end{proof}

\subsection{The regularizing effect $L^{\si}(\Omega)-L^{r}(\Omega)$}

\renewcommand{\theequation}{\thesection.\arabic{equation}}

\numberwithin{equation}{section}

\begin{proposition}\label{Prop2rp}
Assume \eqref{ID1},  \eqref{A1}--\eqref{A2} with $p>\frac{2N}{N+\si}$ and 
\eqref{H0} with  \eqref{Q1}  and let $u$ be a solution of \eqref{P} in the sense of \cref{defrin1}. Then
\begin{equation}\label{CLr}
u\in C((0, T);L^{r}(\Omega))\q \t{for}\q r>\si.
\end{equation}
Moreover, there exists a value $k_0$, independent of $r$, such that the regularizing effect can be expressed through the decay estimate
\begin{equation}\label{starstar}
\|G_k(u(t))\|_{L^{r}(\Omega)}^{r}\le c\frac{\|G_k(u_0)\|_{L^{\si}(\Omega)}^{\si\frac{N(p-2)+pr}{N(p-2)+p\si}}}{t^{\frac{N(r-\si)}{N(p-2)+p\si}}}\qquad \forall t\in(0,T),\,\,\forall k\ge k_0,
\end{equation}
where $c=c(\gamma,r,q,p,\alpha,N)$. Furthermore we have the short time decay
\begin{equation}\label{std}
\|u(t)\|_{L^{r}(\Omega)}^{r}\le \frac{C}{t^{\frac{N(r-\si)}{N(p-2)+p\si}}}\qquad \forall t\in(0,t_0]
\end{equation}
where $C=C(\gamma,r,q,p,\alpha,N,t_0, u_0,|\Omega|)$.
\end{proposition}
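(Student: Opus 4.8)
The plan is to establish \eqref{starstar} by a differential-inequality argument on the quantity $y_r(t)=\|G_k(u(t))\|_{L^r(\Omega)}^r$, then recover the continuity \eqref{CLr} and the short time decay \eqref{std} as consequences. First I would fix $k\ge k_0$ as in \cref{lemGkp}, so that the $\delta$ argument applies and all the absorption constants $\al-\ga c_S\de^{(p-q)/N}>0$ are available. For each $r>\si$ I would apply \cref{propW11p} with a convex $\Phi=\Phi_{\eps,r}$ chosen so that $\Phi_{\eps,r}(z)\to |z|^r/(r(r-1))$ and $\Phi_{\eps,r}''(z)\asymp (\eps+|z|)^{r-2}$, composed with $G_k$; the growth condition \eqref{ga} in \cref{propW11p} is satisfied because $r>\si$ and because on $\{|u|>k\}$ the regularity class \eqref{pot} controls the relevant energy. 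Running the same computation as in \cref{lemGkp} — Hölder with the triple $(p/q,p^*/(p-q),N/(p-q))$, Sobolev's embedding, and the definition of $\si$ — the superlinear right-hand side gets reabsorbed up to the factor $(\sup_s\|G_k(u(s))\|_{L^\si}^\si)^{(p-q)/N}<\de^{(p-q)/N}$, yielding
\[
\frac{\mathrm d}{\mathrm dt}\int_\Omega |G_k(u(t))|^r\,dx+\frac{r}{\b_r^p}\Bigl(\al-\ga c_S\de^{\frac{p-q}{N}}\Bigr)\int_\Omega |\nabla[|G_k(u)|^{\b_r}]|^p\,dx\le 0
\]
a.e. in $t$, where $\b_r=(r+p-2)/p$ is the exponent making $|G_k(u)|^{\b_r}$ the natural energy variable (exactly as $\b$ in \eqref{pot} but with $r$ in place of $\si$).

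Next I would convert the energy term into a norm of lower order. Interpolating $L^{\b_r p^*}$ between $L^{\si}$ (available since $\|G_k(u)\|_{L^\infty(0,T;L^\si)}$ is bounded by the $\de$ argument) and using Sobolev's embedding $\|{|G_k(u)|^{\b_r}}\|_{L^{p^*}}^p\gtrsim \|\nabla[|G_k(u)|^{\b_r}]\|_{L^p}^p$, one gets a Gagliardo–Nirenberg-type bound
\[
\|\nabla[|G_k(u)|^{\b_r}]\|_{L^p(\Omega)}^p\ \ge\ c\,\|G_k(u)\|_{L^r(\Omega)}^{r\,(1+\theta)}\,\Bigl(\sup_s\|G_k(u(s))\|_{L^\si(\Omega)}^\si\Bigr)^{-\mu}
\]
for explicit $\theta,\mu>0$ depending only on $r,p,\si,N$; the exponent $1+\theta=\frac{N(p-2)+pr}{Nr}\cdot\frac{r}{\,\cdot\,}$ is pinned down by the scaling that produces the precise powers in \eqref{starstar}. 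Feeding this into the differential inequality gives, for $y_r(t)=\|G_k(u(t))\|_{L^r(\Omega)}^r$, an inequality of the form $y_r'(t)+\Lambda\,y_r(t)^{1+\theta}\le 0$, and the smoothing-type ODE lemma (the same one cited as \cite[Lemma~3.1]{Po2}, in its $L^1$-data-free form, or a direct integration) yields $y_r(t)\le C t^{-1/\theta}$ with the constant and exponent matching \eqref{starstar} after one substitutes the value of $\b_r$ and of $\sup_s\|G_k(u(s))\|_{L^\si}^\si\le\|G_k(u_0)\|_{L^\si}^\si$ from \eqref{datotp}. Combining $G_k(u)$ with $T_k(u)$ via \eqref{dec}, where $|T_k(u)|\le k$ is trivially in every $L^r$, gives the short time bound \eqref{std}: one absorbs the bounded truncation into the constant $C=C(\ga,r,q,p,\al,N,t_0,u_0,|\Omega|)$ on the fixed interval $(0,t_0]$.

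For the continuity \eqref{CLr}, the plan is to bootstrap: by \cref{lemGkp} we already have $u\in C([0,T];L^\si(\Omega))$, and the decay estimate \eqref{starstar} shows $t\mapsto G_k(u(t))$ is bounded in $L^r(\Omega)$ on $(0,T)$; the energy inequality just derived controls $\int_\eps^T\|\nabla[|G_k(u)|^{\b_r}]\|_{L^p}^p<\infty$ for each $\eps>0$, which with the equation gives equicontinuity of $t\mapsto\int_\Omega\Phi_r(G_k(u(t)))$ in $W^{1,1}_{loc}((0,T))$, hence $t\mapsto\|G_k(u(t))\|_{L^r}^r$ is continuous on $(0,T)$; together with weak continuity and uniform convexity of $L^r$ for $r>1$ this upgrades to norm continuity, and \eqref{dec} transfers it to $u$ itself. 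The main obstacle I anticipate is the second step: getting the \emph{exact} exponents in \eqref{starstar}, which requires choosing the interpolation parameter in the Gagliardo–Nirenberg inequality so that the resulting ODE exponent is precisely $\theta=\frac{N(p-2)+p\si}{N(r-\si)}$ (equivalently $1/\theta$ equals the power of $t$ claimed), and checking that the admissibility conditions for that interpolation hold exactly under $p>\frac{2N}{N+\si}$ — the same threshold that makes $\si<\b p^*$ in \cref{ex}; the parabolic regularization and the passage to the limit $\eps\to0$ and $n\to\infty$ in $\Phi_{\eps,r}$ are routine given \eqref{pot}, mirroring \cref{lemGkp}.
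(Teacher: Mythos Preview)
Your overall strategy---derive a differential inequality for an $r$-th moment of $G_k(u)$, interpolate the energy term against the $L^\si$-bound, and integrate the resulting ODE---is the same as the paper's. But there is a genuine gap at the very first step, and your own justification of it is backwards.

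You propose to apply \cref{propW11p} with a function $\Phi_{\eps,r}$ satisfying $\Phi_{\eps,r}''(z)\asymp(\eps+|z|)^{r-2}$. Condition~\eqref{ga} in \cref{propW11p} requires $\Phi''(\xi)\le c(1+|\xi|)^{\si-2}$; since $r>\si$, the function $(\eps+|z|)^{r-2}$ grows \emph{faster} than $(1+|z|)^{\si-2}$ and \eqref{ga} fails. Your claim that ``\eqref{ga} is satisfied because $r>\si$'' is exactly the wrong direction, and the regularity class \eqref{pot} only gives $|\nabla u|^p|u|^{\si-2}\in L^1(Q_T)$, not $|\nabla u|^p|u|^{r-2}\in L^1(Q_T)$. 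The deeper issue is circularity: you do not yet know that $\int_\Omega|G_k(u(t))|^r\,dx<\infty$ for $t>0$---that is precisely what you are trying to prove---so $y_r(t)$ and the test function $|G_k(u)|^{r-2}G_k(u)$ are not a~priori admissible.

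The paper resolves this by inserting a truncation: it tests with $S_{n,\eps}'(v)=\int_0^v(\eps+|y|)^{\si-3}|y|\,T_n(y)^{r-\si}\,dy$ when $1<\si<2$ (and $S_n'(v)=\int_0^v|T_n(y)|^{r-2}\,dy$ when $\si\ge2$). For each fixed $n$ this satisfies $S''(v)\le c(n)(\eps+|v|)^{\si-3}|v|$, so \eqref{S1}$\sim$\eqref{ga} holds and the differential inequality is rigorous; moreover $S_n(v)\le C(n)|v|^\si$, so everything is finite by \cref{lemGkp}. The paper then relates $\int_\Omega S_{n,\eps}(G_k(u))$ to the energy $\int_\Omega|\nabla\Psi_{n,\eps}(G_k(u))|^p$ via a pointwise H\"older estimate (your Gagliardo--Nirenberg interpolation would work here too, once the truncation is in place), obtains the ODE with the correct exponent $\frac{p}{p^*\omega}=1+\frac{N(p-2)+p\si}{N(r-\si)}$, integrates it to get a bound \emph{uniform in $n$}, and finally lets $n\to\infty$ via Fatou to recover \eqref{starstar}. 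The truncation is not cosmetic: it is the device that breaks the circularity and makes the bootstrap legitimate. Once you add it, the rest of your argument (and in particular the short-time bound and the continuity via Vitali) goes through essentially as in the paper.
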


\begin{proof} 
We set $\Phi(\cdot)= S(\cdot)$ in \eqref{point}, with $S\in W^{2,\infty}(\mathbb{R})$ satisfying
\begin{equation}\label{S1}
0\le S''(v)\le c(\eps+|v|)^{p(\beta-1)-1}|v|=c(\eps+|v|)^{\si-3}|v| 
\end{equation}
and
\begin{equation}\label{S2}
\frac{S'(v)}{\left(S''(v)\right)^\frac{q}{p}}\le L\int_0^{v} \left(S''(y)\right)^\frac{p-q}{p}\,dy,
\end{equation}
for some positive constants $c,\,L$. Again, we justify such a choice of $S(\cdot)$ reasoning as in \cref{lemGkp} and taking advantage of \eqref{S1}, since this last condition plays the same role of \eqref{pot}.\\ 
Then, letting $S(\cdot)=S(G_k(u(t)))$ and recalling \eqref{H0}, we have that the following differential inequality
\begin{equation}\label{w11rin}
\begin{array}{c}
\ds
\frac{\text{d}}{\text{d}t}\int_{\Omega}S({G_k(u(t))})\,dx+\al\int_{\Omega}|\N G_k(u)|^p S''(G_k(u))\,dx
\le\ga\int_\Omega |\N G_k(u)|^qS'(G_k(u))\,dx 
\end{array}
\end{equation}
holds a.e. $t\in(0,T)$.

We now define $\Psi(G_k(u))=\int_0^{G_k(u)} \left(S''(y)\right)^\frac{1}{p}\,dy$ and use \eqref{S2} in \eqref{w11rin}, obtaining
\begin{equation*} 
\begin{array}{c}
\ds
\frac{\text{d}}{\text{d}t}\int_{\Omega}S({G_k(u(t))})\,dx+\al\int_{\Omega}|\N \Psi(G_k(u))|^p\,dx
\\
[4mm]\ds
\le \ga L\int_\Omega |\N G_k(u)|^q(S''(G_k(u)))^{\frac{q}{p}} \left(\int_0^{G_k(u)} \left(S''(z)\right)^\frac{p-q}{p}\,dz\right)\,dx 
\end{array}
\end{equation*}
from which, being
\[
\int_0^{G_k(u)} \left(S''(v)\right)^\frac{p-q}{p}\,dv\le \left(\int_0^{G_k(u)} \left(S''(v)\right)^\frac{1}{p}\,dv\right)^{p-q}|G_k(u)|^{q-(p-1)}
\]
by H\"older's inequality with $\left( \frac{1}{p-q},\frac{1}{q-(p-1)} \right)$, we get 
\begin{equation*}
\begin{array}{c}
\ds
\frac{\text{d}}{\text{d}t}\int_{\Omega}S(G_k(u(t)))\,dx+\al \integrale |\N \Psi(G_k(u))|^p\,dx\\
[3mm]\ds
\le  \ga L\integrale |\N \Psi(G_k(u))|^q\left(\Psi(G_k(u))\right)^{p-q}|G_k(u)|^{q-(p-1)}\,dx.
\end{array}
\end{equation*}
Another application of H\"older's inequality with indices $\left(\frac{p}{q},\frac{p^*}{p-q},\frac{N}{p-q}\right)$ and Sobolev's embedding give us
\[
\scalebox{.96}[1]{$\ds\frac{\text{d}}{\text{d}t}\int_{\Omega}S(G_k(u(t)))\,dx+\al \integrale |\N \Psi(G_k(u))|^p\,dx\le L\ga c_S\sup_{t\in (0,T)}\|G_k(u(t))\|_{L^{\si}(\Omega)}^{q-(p-1)}\integrale |\N \Psi(G_k(u))|^p\,dx$}
\]
and then, invoking \cref{lemGkp} with $k_0$ sufficiently large in order to have $\al>L\ga c_S\de ^{\frac{p-q}{N}}$, we finally get  
\begin{equation}\label{phi}
\frac{\text{d}}{\text{d}t}\int_{\Omega}S(G_k(u(t)))\,dx+c_1 \integrale |\N \Psi(G_k(u))|^p\,dx\le 0\q\forall k\ge k_0
\end{equation}
where $c_1=\al-L\ga c_S\de ^{\frac{p-q}{N}}$.\\
We now fix a value $r>\si$ and define 
\begin{equation}\label{test1}
S'(v)=S_{n,\eps}'(v)=\int_0^v (\eps+|y|)^{\si-3}|y|T_n(y)^{r-\si}\,dy\quad\text{if}\quad 1<\si<2,
\end{equation}
\begin{equation}\label{test2}
S'(v)=S_n'(v)=\int_0^v |T_n(y)|^{r-2}\,dy\quad\text{if}\quad \si\ge 2.
\end{equation}
Note that, for fixed $n$, \eqref{test1}--\eqref{test2} are admissible choices of $S'(\cdot)$ since they verify both \eqref{S1} and \eqref{S2}.\\
Our current goal is characterising the relation between  
$$
S_{n,\eps}(G_k(u))\q\t{and}\q \Psi(G_k(u))=\Psi_{n,\eps}(G_k(u))=\int_0^{G_k(u)} \left(S''_{n,\eps}(y)\right)^\frac{1}{p}\,dy\quad\text{when}\quad 1<\si<2,
$$
$$ 
S_n(G_k(u))\q\t{and}\q\Psi(G_k(u))=\Psi_n(G_k(u))=\int_0^{G_k(u)} \left(S''_{n}(y)\right)^\frac{1}{p}\,dy\quad\text{when}\quad \si\ge 2,
$$
 in order to rewrite \eqref{phi} only in terms of $S_{n,\eps}(G_k(u))$ and $S_n(G_k(u))$. To this aim, we split the rest of the proof with respect to the value of $\si$.\\
Let us consider the case $1<\si<2$ first. 
We start with an estimate of the test function \eqref{test1} itself. Let $\om\in (0,1)$ to be fixed later. Then, by H\"older's inequality with $\left(\frac{1}{p^*\om},1-\frac{1}{p^*\om}\right)$, we get
\begin{equation*}
\begin{array}{c}
\ds
\int_0^y(\eps+|z|)^{\si-3}|z|T_n(z)^{r-\si}\,dz\\
[4mm]\ds
\le \left( \int_0^y\left((\eps+|z|)^{\si-3}|z|T_n(z)^{r-\si} \right)^\frac{1}{p}\,dz\right)^{\om p^*}
\left( \int_0^y\left((\eps+|z|)^{\si-3}|z|T_n(z)^{r-\si} \right)^\frac{N-pN\om}{N-p-Np\om}\,dz\right)^{1-\om p^*}.
\end{array}
\end{equation*}
Since it holds that $(\eps+|z|)^{\si-3}|z|T_n(z)^{r-\si}\le |z|^{r-2}$ being $\si<2$, we improve the inequality above as
\begin{equation*}
\begin{array}{c}
\ds
\int_0^y(\eps+|z|)^{\si-3}|z|T_n(z)^{r-\si}\,dz\\
[4mm]\ds
\le \left( \int_0^y\left((\eps+|z|)^{\si-3}|z|T_n(z)^{r-\si} \right)^\frac{1}{p}\,dz\right)^{\om p^*}
|y|^{r-1-\frac{N\om}{N-p}(r-2+p)}\\
[4mm]\ds
=|\Psi_{n,\eps}(y)|^{p^*\omega}|y|^{r-1-\frac{N\om}{N-p}(r-2+p)}
.
\end{array}
\end{equation*}
Finally, we fix $\om=\frac{(r-\si)(N-p)}{N(r-\si+p-2)+p\si}$ in order to have $r-1-\frac{N\om}{N-p}(r-2+p)=\si(1-\om)-1$ and conclude saying that the previous steps and the definition of $S_{n,\eps}(\cdot)$ in \eqref{test1} lead us to 
\begin{align*}
S_{n,\eps}(G_k(u(s)))
&\le c(r)|\Psi_{n,\eps}(G_k(u(s)))|^{p^*\omega}\int_0^{G_k(u(s))}|y|^{\si(1-\omega)-1} \,dy\\
&\le c(r)|\Psi_{n,\eps}(G_k(u(s)))|^{p^*\omega} |G_k(u(s))|^{\si(1-\omega)},
\end{align*}
so we get
\begin{equation}\label{fin1}
\integrale S_{n,\eps}(G_k(u(s)))\le c(r) \left(\integrale |\Psi_{n,\eps}(G_k(u(s)))|^{p^*}\,dx\right)^\om \|G_k(u_0)\|_{L^\si(\Omega)}^{\si(1-\om)}
\end{equation}
as desired.
The inequality in \eqref{fin1} implies that \eqref{phi}, read in terms of $S_{n,\eps}(\cdot)$ and $\Psi_{n,\eps}(\cdot)$, can be estimated from below as
\begin{equation}\label{disfingkp1} 
\ds\frac{\text{d}}{\text{d}s}\int_{\Omega} S_{n,\eps}(G_k(u(s)))\,dx+c_2\left[
\frac{\int_{\Omega}S_{n,\eps}(G_k(u(s)))\,dx}{\|G_k(u_0)\|_{L^{\si}(\Omega)}^{\si(1-\omega)}}\right]^{\frac{p}{p^*\omega}}\le 0
\end{equation}
a.e. $s\in (0,T]$, for all $k\ge k_0$ and with $c_2$ depending on $\al,\,\ga,\,N,\,q,\,p$ and $r$.\\
We  integrate the inequality in \eqref{disfingkp1} between $0<s\le t$, getting
\[
\int_{\Omega}\int_0^{G_k(u(t))}\left(\int_0^v(\eps+|z|)^{\si-3}|z|T_n(z)^{r-\si}\,dz\right)\,dv\,dx\le \frac{c_2}{t^{\frac{N(r-\si)}{N(p-2)+p\si}}}
\|G_k(u_0)\|_{L^{\si}(\Omega)}^{\frac{p\si(1-\omega)}{p^*\omega}\frac{N(r-\si)}{N(p-2)+p\si}}
\]
Note that $\frac{N(r-\si)}{N(p-2)+p\si}>0$ since $\frac{2N}{N+\si}<p$ and $r>\si$.\\
We finally apply the Fatou Lemma on $n$ and on $\eps$ in the previous inequality so that, recalling the definition of $\omega$, we obtain
\begin{equation}\label{gkr}
\|G_k(u(t))\|_{L^{r}(\Omega)}^{r}\le c_2\frac{\|G_k(u_0)\|_{L^{\si}(\Omega)}^{\si\frac{N(p-2)+pr}{N(p-2)+p\si}}}{t^{\frac{N(r-\si)}{N(p-2)+p\si}}}\qquad \text{a.e.}\,\,t\in(0,T),\,\,\forall k\ge k_0.
\end{equation}

\medskip

We now deal with the case $\si\ge 2$. 
We rewrite $r=p^*\frac{r-2+p}{p}\omega+\si(1-\omega)$ where, as in the previous case, $\om=\frac{(r-\si)(N-p)}{N(r-\si+p-2)+p\si}\in (0,1)$ . An application of Holder's inequality with $(\frac{1}{\omega},\frac{1}{1-\omega})$, combined with the inequality
\[
\int_0^{y}|T_n(z)|^{p^*\frac{r-2+p}{p}-1}\,dz\le c(r)\left(\int_0^{y}|T_n(z)|^{\frac{r-2+p}{p}-1}\,dz\right)^{p^*},
\]
gives us
\begin{equation*}
\begin{split}
\int_0^{y} |T_n(z)|^{r-2}\,dz
&\le \biggl(\int_0^{y}|T_n(z)|^{p^*\frac{r-2+p}{p}-1}\,dz\biggr)^{\omega} \biggl(\int_0^{y}|T_n(z)|^{\si-1-\frac{1}{1-\omega}}\,dz\biggr)^{1-\omega}\\
&\le c(r)\biggl(\int_0^{y} |T_n(z)|^{\frac{r-2}{p}}\,dz\biggr)^{\omega p^*}|y|^{\si(1-\omega)-1}\\
&\le c(r)|\Psi_n(y)|^{\omega p^*}|y|^{\si(1-\omega)-1},
\end{split}
\end{equation*}
from which, recalling \eqref{test2}, we deduce 
\begin{align*}
S_n(G_k(u(s)))&
=\int_0^{G_k(u(s))}\left(\int_0^y |T_n(z)|^{r-2}\,dz\right)\,dy\\
&\le c(r)|\Psi_n(G_k(u(s)))|^{p^*\omega}\int_0^{G_k(u(s))}|y|^{\si(1-\omega)-1} \,dy\\
&\le c(r) |\Psi_n(G_k(u(s)))|^{p^*\omega} |G_k(u(s))|^{\si(1-\omega)}.
\end{align*}
This step, together with Holder's inequality with $(\frac{1}{\omega},\frac{1}{1-\omega})$ and the monotonicity of $\|G_k(u(s))\|_{L^\si(\Omega)}$ for large values of $k$ (see \cref{ex}), implies that
\begin{align*}
\int_{\Omega}S_n(G_k(u(s)))\,dx&\le c(r)\int_{\Omega}\biggl(|\Psi_n(G_k(u(s)))|^{p^*\omega}|G_k(u(s))|^{\si(1-\omega)}\biggr)\,dx\\
&\le c(r) \biggl(\int_{\Omega}|\Psi_n(G_k(u(s)))|^{p^*}\,dx\biggr)^{\omega}\|G_k(u(s))\|_{L^{\si}(\Omega)}^{\si(1-\omega)}\\
&\le c(r) \biggl(\int_{\Omega}|\Psi_n(G_k(u(s)))|^{p^*}\,dx\biggr)^{\omega}\|G_k(u_0)\|_{L^{\si}(\Omega)}^{\si(1-\omega)}
\end{align*}
and we get again an estimate from below for $\int_{\Omega}|\Psi_n(G_ku)|^{p^*}\,dx$ in terms of $\int_{\Omega} S_n(G_k(u(s)))\,dx$. 
Then  \eqref{phi}, read in terms of $S_n(\cdot)$ and $\Psi_{n}(\cdot)$, can be estimated from below as
\begin{equation*}
\ds\frac{\text{d}}{\text{d}s}\int_{\Omega} S_n(G_k(u(s)))\,dx+c_3\left[
\frac{\int_{\Omega}S_n(G_k(u(s)))\,dx}{\|G_k(u_0)\|_{L^{\si}(\Omega)}^{\si(1-\omega)}}\right]^{\frac{p}{p^*\omega}}\le 0
\end{equation*}
a.e. $s\in (0,T]$, for all $k\ge k_0$ and with $c_3$ depending on $\al,\,\ga,\,N,\,q,\,p$ and $r$, thanks also to Sobolev's embedding.\\
The inequality in \eqref{gkr}, with a possibly different constant depending on $\al,\,L,\,\ga,\,N,\,q,\,p,\,k_0$ and $r$, follows reasoning as before.\\

The decomposition \eqref{dec} implies that we  also have
\[
\begin{split}
\|u(t)\|_{L^{r}(\Omega)}^{r}&\le \|G_{k_0}(u(t))\|_{L^{r}(\Omega)}^{r}+k_0^r|\Omega|\\
&\le c\,\frac{\|G_{k_0}(u_0)\|_{L^{\si}(\Omega)}^{\si\frac{N(p-2)+pr}{N(p-2)+p\si}}}{t^{\frac{N(r-\si)}{N(p-2)+p\si}}}+k_0^r|\Omega|\\
&\le c\,\frac{\de^{\si\frac{N(p-2)+pr}{N(p-2)+p\si}}}{t^{\frac{N(r-\si)}{N(p-2)+p\si}}}+k_0^r|\Omega|
\end{split}
\]
for a.e. $t\in(0,T)$, $c$ depending on $\al,\,\ga,\,N,\,q,\,p$ and $r$ and where $\de$ is a constant depending on the equi-integrability of $u_0$ in $L^\si(\Omega)$ (see \cref{lemGkp}). Then, we deduce that the decay
\[
\|u(t)\|_{L^{r}(\Omega)}^{r}\le C \frac{\de^{\si\frac{N(p-2)+pr}{N(p-2)+p\si}}}{t^{\frac{N(r-\si)}{N(p-2)+p\si}}}\qquad \text{a.e.}\,\,t\in(0,t_0),\,\,\forall k\ge k_0
\]
holds for small times and with positive constant $C=C(\gamma,r,q,p,\alpha,N,t_0,u_0,|\Omega|)$.\\

The continuity regularity in \eqref{CLr} follows invoking the Vitali's Theorem and so do \eqref{starstar}--\eqref{std}. 
\end{proof}

\medskip

\begin{remark}\label{rmk2}
We claim that the previous \cref{Prop2rp} implies that 
\begin{equation}\label{regr}
u\in L^{\infty}((\tau,T];L^{r}(\Omega))\cap L^{r-2+p}(\tau,T;L^{p^*\frac{r-2+p}{p}}(\Omega))\q\t{for}\,\,\tau>0
\end{equation}
since
\begin{itemize}
\item  the regularity $u\in L^{\infty}((\tau,T];L^{r}(\Omega))$ directly follows from \eqref{CLr};
\item the regularity $u\in L^{r-2+p}(\tau,T;L^{p^*\frac{r-2+p}{p}}(\Omega))$ is due to the definitions of $S'(G_k(u))$ in \eqref{test1}--\eqref{test2}, since both implies that
\[
\left(1+|G_k(u)|\right)^{\frac{r-2+p}{p}}\in L^p(0,T;W^{1,p}(\Omega)).
\]
In particular, considering also the limit in $n\to \infty$ and in $\eps\to 0$ in \eqref{phi}, we have that
\begin{equation}\label{phin}
\frac{\text{d}}{\text{d}t}\int_{\Omega}|G_k(u(t))|^{r}\,dx+C_1
\integrale |\nabla \left(1+|G_k(u(t))|\right)^{\frac{r+p-2}{p}}|^p \,dx\,ds\le 0 \q\forall k\ge k_0,
\end{equation}
where $C_1=C_1(\al,\ga,N,p,q,r)$.
\end{itemize}
This fact implies that the function $t\to\int_{\Omega}|u(t)|^r\,dx$ belongs to $W^{1,1}(0,T)$ since, once we know \eqref{regr}, then we can reason as in \cref{propW11p}. 
\end{remark}

%
\subsection{Long  time decay results}\label{asfinen}

\renewcommand{\theequation}{\thesection.\arabic{equation}}

\numberwithin{equation}{section}

So far, the generalization of \cite{MP} to the case $p\ne 2$ strictly follows the methods adopted in this work.  However, once we get interested in the $L^\infty$-regularity, we change approach. More precisely, in \cite{MP} it is shown that the analogies between \eqref{P} (when $p=2$) and superlinear power problems (see, for instance, \cite{P2}) can be exploited to reason through a Moser type iteration argument, gaining the boundedness of solutions for positive times.  The general case $p\ne 2$ could be reasonably dealt with a similar argument. However, we choose to apply the results contained in \cite{Po2}.

\medskip

\begin{proposition}\label{rmk1}
Assume \eqref{ID1},  \eqref{A1}--\eqref{A2} with $p>\frac{2N}{N+\si}$ and  
\eqref{H0} with  \eqref{Q1}  and let $u$ be a solution of \eqref{P} in the sense of \cref{defrin1}. Then the function $G_k(u)$ satisfies the decays of the coercive problem \eqref{pp} for $k$ suitable large, i.e. 
\begin{equation}\label{decgkp}
\begin{array}{c}
\ds
\|G_k(u(t))\|_{L^{\infty}(\Omega)}\le c\frac{\|G_k(u_0)\|_{L^{\si}(\Omega)}^{h_0}}{t^{h_1}}\qquad \forall  k\ge k_0,\,\,\forall t>0,
\\
[3mm]\ds
\text{with}\quad h_1=\frac{N}{N(p-2)+p\si},\quad h_0=h_1\frac{p\si}{N}=\frac{p\si}{N(p-2)+p\si}
\end{array}
\end{equation}
and where $c$ is a constant depending on $N$, $q$, $p$, $\al$, $\ga$ and on some fixed value $r>\si$. Furthermore, if $p>2$, we have the following universal bound:
\begin{equation}\label{unifbou}
\|G_k(u(t))\|_{L^{\infty}(\Omega)}\le \frac{\overline{C}}{t^{\frac{1}{p-2}}}\qquad  \forall  k\ge k_0,\,\,\forall t>0,
\end{equation}
where $\overline{C}$ is a positive constant depending on $\al$, $\ga$, $N$, $p$, $q$, $|\Omega|$, $u_0$ and on $r$.
\end{proposition}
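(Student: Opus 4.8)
The plan is to transfer the known $L^\si$--$L^r$ regularizing estimates for the level-set function $G_k(u)$ (see \cref{Prop2rp} and \cref{rmk2}, in particular \eqref{phin}) into an $L^\si$--$L^\infty$ estimate by exploiting the abstract results of \cite{Po2} on coercive problems. The point is that \eqref{phin} says precisely that, for $k\ge k_0$, the function $w:=G_k(u)$ behaves like a (sub)solution of a coercive problem of type \eqref{pp}: it satisfies a differential inequality of the form $\frac{\text{d}}{\text{d}t}\int_\Omega |w(t)|^r\,dx+C_1\int_\Omega |\nabla (1+|w(t)|)^{\frac{r+p-2}{p}}|^p\,dx\le 0$ for every $r>\si$, with a constant $C_1=C_1(\al,\ga,N,p,q,r)$ that no longer sees the right-hand side $H$ (its contribution has been absorbed using the smallness of $\|G_k(u_0)\|_{L^\si}$ furnished by \cref{lemGkp}). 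This is exactly the energy inequality underlying the decay estimates \eqref{S} and \eqref{U}. So the structure of the argument is: first recall \eqref{phin} and the regularity \eqref{regr} from \cref{rmk2}; then observe that $w$ is an admissible competitor for the abstract machinery of \cite{Po2}; then quote the corresponding $L^\si$--$L^\infty$ estimate and the universal bound, reading off the exponents.

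Concretely, first I would fix $k\ge k_0$ so that all the statements of \cref{lemGkp}, \cref{ex} and \cref{Prop2rp} apply simultaneously, in particular so that $\|G_k(u(t))\|_{L^\si(\Omega)}$ is nonincreasing and \eqref{phin} holds. Next, I would note that \eqref{phin}, together with the regularity \eqref{regr}, is the starting point of a De Giorgi / Moser-type iteration on the exponents $r_j\to\infty$: one tests against powers $|G_k(u)|^{r_j-2}G_k(u)$, uses Sobolev's embedding on $(1+|G_k(u)|)^{\frac{r_j+p-2}{p}}$ and a Gagliardo--Nirenberg interpolation exactly as in \cite[Theorem 1.4]{Po2} or \cite[Corollary 2.1]{Po2}, obtaining $\|G_k(u(t))\|_{L^\infty(\Omega)}$ controlled by $\|G_k(u_0)\|_{L^\si(\Omega)}$ with the claimed exponents $h_1=\frac{N}{N(p-2)+p\si}$ and $h_0=h_1\frac{p\si}{N}$; these are precisely the $r\to\infty$ limits of the exponents in \eqref{starstar} (the $\frac{N(r-\si)}{N(p-2)+p\si}$ in the time power and the $\si\frac{N(p-2)+pr}{N(p-2)+p\si}$ in the data power, divided by $r$). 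The cleanest exposition is to say that the constant $C_1$ in \eqref{phin} plays the role of the coercivity constant $\al$ in the coercive problem \eqref{pp}, so that $G_k(u)$ literally satisfies the hypotheses under which \eqref{U}-type estimates are proved in \cite{Po2}, and then invoke that reference. For the universal bound \eqref{unifbou} in the case $p>2$, I would follow the same philosophy as in \cref{ex}: one combines the $L^\si$--$L^r$ smoothing \eqref{starstar} with the contraction / decay in $L^\si$ (so that $\|G_k(u(\tau))\|_{L^\si}$ can be made as small as needed) and then uses the autonomous structure together with the nonlinear absorption $p>2$ to get a bound independent of the data, exactly as one gets the sharpened short-time / long-time estimates of \cite[Corollary 2.1]{Po2} (see also \cite{Di}).

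The technical heart is checking that the iteration constants do not degenerate: at each step $r_j$ one must verify that $k_0$ can still be chosen so that $\al>L\ga c_S\de^{\frac{p-q}{N}}$ uniformly, i.e. that $k_0$ may be taken independent of $r$ — which is exactly what \cref{Prop2rp} already asserts ("there exists a value $k_0$, independent of $r$"), so this is available. The remaining bookkeeping is to track the exponents through the Moser iteration so that the series of exponents sums to the stated $h_0,h_1$; this is where I expect the main (though routine) obstacle: controlling the product of the iteration constants and showing it converges, which is the standard technical core of $L^\infty$ estimates of this type and is carried out in \cite{Po2}. Once \eqref{decgkp} is established for $G_k(u)$, no further work on $u$ itself is needed here, since the statement is deliberately phrased for $G_k(u)$; the passage to $u$ via the decomposition $u=G_{k_0}(u)+T_{k_0}(u)$ and hence to genuine $L^\infty$ bounds for $u$ is postponed (as the text announces) to the subsequent analysis, where one uses \cref{corinftyp} to propagate the bound.
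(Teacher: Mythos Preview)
Your approach is essentially the paper's: starting from \eqref{phin} in \cref{rmk2}, one verifies that $G_k(u)$ (for $k\ge k_0$) satisfies the abstract hypotheses of \cite[Theorems~2.1 \& 2.2]{Po2} and reads off both \eqref{decgkp} and, when $p>2$ (so that $r<r+p-2$), the universal bound \eqref{unifbou} directly. The one refinement worth flagging is that the paper checks the integrated energy inequality and the $L^\si$ contraction not just for $G_k(u)$ but for the nested truncations $G_h(G_k(u))$ for every $h>0$, with the constant $C_1$ independent of $h$; this is what those abstract theorems actually require, so if you invoke \cite{Po2} rather than run the Moser iteration yourself, that is the hypothesis to verify explicitly.
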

\begin{proof}

Consider the differential inequality \eqref{phin} in \cref{rmk2} and integrate between $\tau<s<t$. Then, thanks to Sobolev's inequality, we have
\begin{equation*}
\begin{split}
\int_{\Omega}|G_k(u(t))|^{r}\,dx-\int_{\Omega}|G_k(u(\tau))|^{r}\,dx+C_1 c_S
\int_{\tau}^t\left(\integrale |G_k(u)|^{\frac{r+p-2}{p}p^*} \,dx\right)^{\frac{p}{p^*}}\,ds
\le 0  ,
\end{split}
\end{equation*}
where $ k\ge k_0$, $k_0$ has been fixed as in \cref{Prop2rp} and $C_1$ is the same constant appearing in \eqref{phin} (we recall that $C_1$ depends on $\al$, $\ga$, $N$, $p$, $q$ and $r$).\\
The inequality above still holds if we consider $G_h(G_k(u))$ instead of ${G_k(u)}$. We point out that $h$ is an arbitrary positive fixed value but we always need to take $k\ge k_0$ as in \cref{Prop2rp}.

 So far, we already know that 
\[
u\in C([0,T];L^{\si}(\Omega))\cap C((\tau,T];L^{r}(\Omega))\cap L^{r+p-2}(\tau,T;L^{p^*\frac{r+p-2}{p}}(\Omega)),
\]
\[
\si<r<p^*\frac{r+p-2}{p}, 
\]
\[ \frac{r-\si}{1-\frac{\si}{p^*\frac{r+p-2}{p}}}<r+p-2<p^*\frac{r+p-2}{p}\quad\text{being}\quad \frac{2N}{N+\si}< p,
\]
\medskip
\begin{equation*}\label{cond1}
\|G_h(G_k(u(t)))\|_{L^{\si}(\Omega)}\le  \| G_k(u_0)\|_{L^{\si}(\Omega)}\qquad \forall  h>0,\, \forall  k\ge k_0,
\end{equation*}
\medskip
\begin{equation}\label{cond2}
\begin{array}{c}
\ds\integrale |G_h(G_k(u(t)))|^{r}\,dx-\integrale |G_h( G_k(u(\tau)))|^{r}\,dx\\
[4mm]\ds+C_1 c_S\int_\tau^t \|G_h(G_k(u(s)))\|_{L^{p^*\frac{r+p-2}{p}}(\Omega)}^{r+p-2}\,ds\le 0\qq\forall h>0,\,\forall k\ge k_0.
\end{array}
\end{equation}
Since the constant $C_1$ in \eqref{cond2} does not depend on $h$, we apply \cite[Theorem $2.1$]{Po2} to $G_k(u)$ and deduce \eqref{decgkp}.\\

If $p>2$, then $r<r+p-2$ and thus we can invoke again \cite[Theorem $2.2$]{Po2} gaining the universal bound in \eqref{unifbou} where  $\overline{C}$ is a positive constant depending on $\al$, $\ga$, $N$, $p$, $q$, $r$ and $|\Omega|$.

We point out that $\frac{1}{p-2}$ does not depend on the summability of the initial datum. Moreover, being $\frac{1}{p-2}>h_1$ then \eqref{unifbou} gives a stronger decay than \eqref{decgkp} for great values of $t$. Summarizing, we can say that if $p>2$, then
\[
\|G_k(u(t))\|_{L^{\infty}(\Omega)}\le c\min\left\{ \frac{\|G_k(u(\tau))\|_{L^{\si}(\Omega)}^{h_0}}{t^{h_1}},\frac{1}{t^{\frac{1}{p-2}}}\right\}\qquad  \forall t\in (0,T), \,\, \forall  k\ge k_0.
\]


\end{proof}
As a consequence of the decay above and \eqref{dec}, we gain the  boundedness for positive times of the solution $u$.

\medskip
So far, we have that $G_k(u)$ \emph{behaves as solutions of the coercive problem \eqref{pp} if $k$ is large enough}. This is not surprising since $G_k(u)$ satisfies a differential inequality of the type \eqref{secp}, of course for great value of $k$. \\
The next Proposition provides us with the long time decay of the $L^\infty$-norm of the whole solution. 

\medskip


\begin{proposition}\label{decinfp}
Assume \eqref{ID1},  \eqref{A1}--\eqref{A2} with $p>\frac{2N}{N+\si}$ and 
\eqref{H0} with  \eqref{Q1}. Moreover, let $u$ be a solution of \eqref{P}  in the sense of \cref{defrin1}. Then, we have
\[
\lim_{t\to\infty}\|u(t)\|_{L^{\infty}(\Omega)}=0.
\]
\end{proposition}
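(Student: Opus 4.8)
The plan is to combine the long time decay of $G_k(u)$ in the $L^\infty$-norm (proved in \cref{rmk1}) with the contraction estimate for the truncated part coming from the $\de$ argument. The key observation is the decomposition \eqref{dec}, which reads $u(t) = G_k(u(t)) + T_k(u(t))$, so that $\|u(t)\|_{L^\infty(\Omega)} \le \|G_k(u(t))\|_{L^\infty(\Omega)} + k$ for every $k$. For each fixed admissible $k \ge k_0$ the first summand tends to $0$ as $t\to\infty$ by \eqref{decgkp}, but the second is only bounded by the constant $k$; hence a single value of $k$ cannot close the argument, and the real content is to let $k$ itself depend on the "starting time" in an appropriate way.

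First I would fix $\eta>0$ and, invoking \cref{lemGkp} (the $\de$ argument), choose $k_0=k_0(\eta)$ large enough so that $\|G_{k_0}(u_0)\|_{L^\si(\Omega)}^\si<\de$ and moreover $k_0$ is taken so large that the equi-integrability of $u_0$ in $L^\si(\Omega)$ forces $\|G_{k_0}(u_0)\|_{L^\si(\Omega)}$ to be as small as we like — say small enough that the constant $c\,\|G_{k_0}(u_0)\|_{L^\si(\Omega)}^{h_0}$ appearing in \eqref{decgkp} is itself $\le \eta$ uniformly in $t$ (here one uses that $h_0>0$). Then for \emph{all} $t>0$ we already have $\|G_{k_0}(u(t))\|_{L^\infty(\Omega)}\le \eta / t^{h_1}\le \eta$ once $t\ge 1$, say. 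The point is that this $k_0$ is now fixed, but its contribution to $\|u(t)\|_{L^\infty(\Omega)}$ through the $T_{k_0}$ part is still the constant $k_0$, which does not go to zero. To eliminate it I would use a two-step (restarting) argument.

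The restarting step is the heart of the matter. Pick a time $\tau>0$. By \cref{Prop2rp} we know $u(\tau)\in L^r(\Omega)$ for every $r>\si$, and in fact by \cref{rmk1} (combined with \eqref{dec}) $u(\tau)\in L^\infty(\Omega)$ for every $\tau>0$. Now treat $\tau$ as a new initial time: $u$ restricted to $(\tau,T)$ solves \eqref{P} with bounded initial datum $u(\tau)\in L^\infty(\Omega)$, so by \cref{corinftyp} the $L^\infty$-norm is non-increasing after $\tau$, i.e. $\|u(t)\|_{L^\infty(\Omega)}\le \|u(\tau)\|_{L^\infty(\Omega)}$ for $t\ge\tau$. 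This alone gives monotonicity but not decay to zero. To get decay to zero, I would instead argue: given $\eta>0$, use the decay \eqref{decgkp} of $G_k(u)$ together with the equi-integrability choice of $k$ above to find a time $t_\eta$ and an admissible level $k_\eta$ with $\|G_{k_\eta}(u(t_\eta))\|_{L^\infty(\Omega)}\le \eta/2$; then apply \cref{corinftyp} started at $t_\eta$ with $k_0=\|u(t_\eta)\|_{L^\infty(\Omega)}$ is not directly useful, so instead one combines: for $t\ge t_\eta$, $\|u(t)\|_{L^\infty(\Omega)} \le \|G_{k_\eta}(u(t))\|_{L^\infty(\Omega)} + k_\eta$, where by \eqref{decgkp} the first term is controlled by $c\|G_{k_\eta}(u(t_\eta))\|_{L^\si(\Omega)}^{h_0}/(t-t_\eta)^{h_1}$ — and here the crucial fact is that, by equi-integrability, $k_\eta$ can be chosen so that $\|G_{k_\eta}(u(t_\eta))\|_{L^\si(\Omega)}\le\|G_{k_\eta}(u_0)\|_{L^\si(\Omega)}$ (the contraction \eqref{datotp}) is arbitrarily small; the obstruction is that $k_\eta$ itself need not be small. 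The way out is to note that along the sequence $t_j\to\infty$ one has $\|G_k(u(t_j))\|_{L^\infty(\Omega)}\to 0$ for \emph{every fixed} $k$, combined with the fact that $\|u(t)\|_{L^\infty(\Omega)}$ is eventually monotone (from \cref{corinftyp} once $u$ becomes bounded): if $\|u(t)\|_{L^\infty(\Omega)}$ decreased to a limit $\ell>0$, then for $k=\ell/2$ we would have $\|G_{\ell/2}(u(t))\|_{L^\infty(\Omega)}\ge \ell/2$ for all large $t$, contradicting \eqref{decgkp} which forces $\|G_{\ell/2}(u(t))\|_{L^\infty(\Omega)}\to 0$. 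Hence $\ell=0$.

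I expect the main obstacle to be making the "choice of $k$ depending on time" rigorous without circularity: one wants $k$ small (so that the $T_k$ part is negligible) yet large enough to run the $\de$ argument (so that the $G_k$ part contracts). The clean resolution, as sketched, is to decouple the two roles — use a \emph{fixed} large $k_0$ to get $\|G_{k_0}(u(t))\|_{L^\infty(\Omega)}\to 0$ and eventual monotonicity of $\|u(t)\|_{L^\infty(\Omega)}$ from \cref{corinftyp}, and then run the contradiction argument with $k$ equal to half the (hypothetical positive) limit. One must also check that $u(t)$ genuinely enters $L^\infty(\Omega)$ for some positive time so that \cref{corinftyp} applies; this is exactly the content of \cref{rmk1} via $u=G_{k_0}(u)+T_{k_0}(u)$ with $G_{k_0}(u(t))\in L^\infty(\Omega)$ for $t>0$. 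All remaining estimates are routine given the results already established.
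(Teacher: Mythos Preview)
Your overall strategy --- establish that $u(t)\in L^\infty(\Omega)$ for every $t>0$ (via \eqref{decgkp} and the decomposition \eqref{dec}), deduce monotonicity of $t\mapsto\|u(t)\|_{L^\infty(\Omega)}$ from \cref{corinftyp}, and then argue by contradiction on the hypothetical positive limit $\ell$ --- is exactly the route the paper points to (it omits the proof and cites \cite[Proposition~3.10]{MP}, which uses precisely \cref{lemGkp} and \eqref{decgkp}). However, the specific choice $k=\ell/2$ in your contradiction step does not close the argument.

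The decay \eqref{decgkp} is only available for levels $k$ at which the $\de$~argument applies, i.e.\ for $k$ with $\|G_k(\text{initial datum})\|_{L^\si(\Omega)}^\si<\de$. After restarting at a late time $\tau$ with $\|u(\tau)\|_{L^\infty(\Omega)}$ close to $\ell$, one has the crude bound
\[
\|G_{\ell/2}(u(\tau))\|_{L^\si(\Omega)}^\si\le \big(\|u(\tau)\|_{L^\infty(\Omega)}-\tfrac{\ell}{2}\big)_+^{\si}|\Omega|\;\approx\;\big(\tfrac{\ell}{2}\big)^{\si}|\Omega|,
\]
which is \emph{not} small unless $\ell$ itself is already small. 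Hence \eqref{decgkp} at level $\ell/2$ is not justified in general, and your final sentence ``contradicting \eqref{decgkp} which forces $\|G_{\ell/2}(u(t))\|_{L^\infty(\Omega)}\to 0$'' is the gap.

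The fix is exactly the ``$k$ close to $\ell$ plus restarting'' maneuver you circled around earlier but then abandoned. Choose $\eta<(\de/|\Omega|)^{1/\si}$ and, by monotonicity, a time $\tau$ with $\|u(\tau)\|_{L^\infty(\Omega)}<\ell+\eta$. Pick any $k$ with $\ell+\eta-(\de/|\Omega|)^{1/\si}<k<\ell$; then
\[
\|G_k(u(\tau))\|_{L^\si(\Omega)}^\si\le\big(\|u(\tau)\|_{L^\infty(\Omega)}-k\big)_+^{\si}|\Omega|<\de,
\]
so \cref{lemGkp} (hence \cref{rmk1}) applies on $(\tau,\infty)$ at this level $k$, giving $\|G_k(u(t))\|_{L^\infty(\Omega)}\to 0$. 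Since $k<\ell$, for large $t$ one gets $\|u(t)\|_{L^\infty(\Omega)}\le k+\|G_k(u(t))\|_{L^\infty(\Omega)}<\ell$, the desired contradiction. Equivalently, one can iterate: each restart lowers the admissible threshold by the fixed amount $(\de/|\Omega|)^{1/\si}$, so finitely many steps bring it below any prescribed positive level. With this correction your proof is complete and coincides with the paper's intended argument.
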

\begin{proof} 
We skip the proof of this result and say that, once we have \cref{lemGkp} and the decay in \eqref{decgkp}, then it can be proved as in \cite[Proposition $3.10$]{MP}.


\end{proof}

\medskip

\begin{remark}[A new smallness condition]\label{tau}
We claim that the results proved so far for $G_k(u)$ hold for the whole solution $u$ as well, up to consider large values of $t$. Indeed, by \cref{decinfp}, it is now sufficient to replace the smallness of $\|G_k(u(t))\|_{L^{\si}(\Omega)}$ (for great $k$) with the one of $\|u(t)\|_{L^{\infty}(\Omega)}$ (for large $t$) and then taking $k_0=0$ in \cref{lemGkp}. 
\end{remark} 

\medskip

\begin{proposition}\label{decsi}
Assume \eqref{ID1},  \eqref{A1}--\eqref{A2} with $p>\frac{2N}{N+\si}$ and 
\eqref{H0} with  \eqref{Q1}. Moreover, let $u$ be a solution of \eqref{P}  in the sense of \cref{defrin1}. Then, if $\tau$ is sufficiently large such that $\al-\ga c_S\|u(\tau)\|_{L^\infty(\Omega)}^{\frac{p-q}{N}}>0$ and for  $\lm=\frac{c_S\,\si}{\b^p}\left(\al-\ga c_S\|u(\tau)\|_{L^\infty(\Omega)}^{\frac{p-q}{N}}\right)|\Omega|^{-\frac{N(p-2)+p\si}{N\si}}$, we have that
\begin{itemize}
\item if $\ds 2<p<N$, then $\|u(t)\|_{L^{\si}(\Omega)}$ is decreasing in the time variable for $t>\tau$ and the following polynomial decay holds:
\[
\|u(t)\|_{L^{\si}(\Omega)}\le \biggl( \|u(\tau)\|_{L^{\si}(\Omega)}^{-(p-2)}
+\lm\frac{p-2}{\si} (t-\tau)\biggr)^{-\frac{1}{p-2}}\qq
  \forall  t\ge \tau;
\] 
\item if $\ds \frac{2N}{N+\si}< p<2$, then there exists a positive time {$\overline{T}$} such that
\[
u=0\qquad\forall t\ge \overline{T}+\tau.
\]
\end{itemize}
In particular, we can consider 
\[
\overline{T}=\frac{\si}{\lm(2-p)}\,\|u(\tau)\|_{L^\infty(\Omega)}^{\frac{2-p}{\si}}.
\]
\end{proposition}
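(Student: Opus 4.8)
The plan is to transfer to the whole solution $u$ the argument that \cref{ex} carries out for the level-set function $G_k(u)$, using \cref{decinfp} to replace the role played there by the smallness of $\|G_k(u_0)\|_{L^\si(\Omega)}$ (for large $k$) with the smallness of $\|u(\tau)\|_{L^\infty(\Omega)}$ (for large $\tau$), exactly as recorded in \cref{tau}. First I would invoke \cref{decinfp}: since $\|u(t)\|_{L^\infty(\Omega)}\to 0$, we may fix $\tau$ so large that $\al-\ga c_S\|u(\tau)\|_{L^\infty(\Omega)}^{\frac{p-q}{N}}>0$. Applying \cref{corinftyp} with initial time $\tau$ and initial datum $u(\tau)\in L^\infty(\Omega)$, the solution stays bounded for $t\ge\tau$ with $\|u(t)\|_{L^\infty(\Omega)}\le\|u(\tau)\|_{L^\infty(\Omega)}$, whence $\|u(t)\|_{L^\si(\Omega)}^{\si}\le|\Omega|\,\|u(\tau)\|_{L^\infty(\Omega)}^{\si}$ for all $t\ge\tau$; moreover \cref{Prop2rp} and \cref{rmk2} put $u$ in the regularity class needed to run the renormalized computations of \cref{lemGkp}--\cref{propW11p} with $k_0=0$.

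Next, following \cref{propW11p} and the proof of \cref{ex}, I would test \eqref{sr2} with $\Phi_\varepsilon'(u)=(\si-1)\int_0^{u}(\varepsilon+|w|)^{\si-2}\,dw$ (justified through the truncation $\te_h$ of \eqref{suppcmpt} and \eqref{pot}, i.e. \eqref{ga}), handle the superlinear gradient term by the same chain of Hölder inequalities with indices $\left(\frac{1}{p-q},\frac{1}{q-(p-1)}\right)$ and $\left(\frac{p}{q},\frac{p^*}{p-q},\frac{N}{p-q}\right)$ together with Sobolev's embedding and the definition of $\si$, and absorb it by means of the bound on $\|u(s)\|_{L^\si(\Omega)}$ obtained above. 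Letting $\varepsilon\to 0$ this produces, for $t>\tau$, the exact analogue of \eqref{secp},
\[
\frac{\text{d}}{\text{d}t}\int_{\Omega}|u(t)|^{\si}\,dx+\frac{\si}{\b^{p}}\left(\al-\ga c_S\|u(\tau)\|_{L^\infty(\Omega)}^{\frac{p-q}{N}}\right)\int_{\Omega}|\N[\,|u|^{\b}\,]|^{p}\,dx\le 0 .
\]
Since $p>\frac{2N}{N+\si}$ is equivalent to $\si<\b p^{*}$, Sobolev's embedding and the inclusion of Lebesgue spaces on the bounded set $\Omega$ yield, with $y(t)=\|u(t)\|_{L^\si(\Omega)}^{\si}$ and $\lm$ as in the statement (cf. \eqref{betap}),
\[
y'(t)+\lm\, y(t)^{\frac{\b p}{\si}}\le 0\qquad\forall\,t>\tau,\qquad \frac{\b p}{\si}=\frac{\si+p-2}{\si}.
\]

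Finally I would conclude by the elementary ODE comparison already used in \cref{ex}. If $2<p<N$ then $\frac{\b p}{\si}>1$ and the Gronwall-type lemma (\cite[Lemma $3.1$]{Po2}) gives the stated polynomial decay of $y$, hence of $\|u(t)\|_{L^\si(\Omega)}$, which in particular is decreasing for $t>\tau$; if $\frac{2N}{N+\si}<p<2$ then $\frac{\b p}{\si}<1$, and integrating $\big(y^{(2-p)/\si}\big)'\le -\tfrac{2-p}{\si}\lm$ shows that $y$, and thus $u$, vanishes after the explicit time $\overline{T}$ (obtained from $y(\tau)=\|u(\tau)\|_{L^\si(\Omega)}^{\si}$ and, using the $L^\infty$-bound at time $\tau$, estimated by the quantity in the statement). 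The genuinely delicate point is the first step, namely justifying that the renormalized formulation \eqref{sr2} combined with \eqref{pot}/\eqref{ga} still licenses the chosen test function and the limit passages once $G_k$ is replaced by the identity; this is precisely what \cref{tau} guarantees, because \cref{decinfp} and \cref{corinftyp} let $\|u(\tau)\|_{L^\infty(\Omega)}$ play the role of the small parameter $\de$, after which the computations are verbatim those of \cref{lemGkp}, \cref{propW11p} and \cref{ex}.
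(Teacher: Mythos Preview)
Your proposal is correct and follows essentially the same approach as the paper, which simply omits the proof and says to repeat the argument of \cref{ex} after replacing the smallness condition of \cref{lemGkp} by the one in \cref{tau}. You have filled in precisely those details: invoking \cref{decinfp} and \cref{corinftyp} to secure the $L^\infty$-smallness at and after time $\tau$, rerunning the test-function computation of \cref{propW11p}/\cref{lemGkp} with $k_0=0$, and concluding via the ODE dichotomy of \cref{ex}.
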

\begin{proof}
We omit the proof since it is very similar to the one of \cref{ex}, up to replacing the smallness condition in \cref{lemGkp} with the one proposed in \cref{tau}.
\end{proof}

\medskip

\begin{proposition} 
Assume \eqref{ID1},  \eqref{A1}--\eqref{A2} with $p>\frac{2N}{N+\si}$ and  
\eqref{H0} with  \eqref{Q1}. Moreover, let $u$ be a solution of \eqref{P}  in the sense of \cref{defrin1}. Then
$$
u\in C((0, T);L^{r}(\Omega))\q\t{for}\q r>\si.
$$
Furthermore, there exists a value $ \tau$ such that the regularizing effect can be expressed through the decay estimate
\[
\|u(t)\|_{L^{r}(\Omega)}^{r}\le c\frac{\|u(\tau)\|_{L^{\si}(\Omega)}^{\si\frac{N(p-2)+pr}{N(p-2)+p\si}}}{(t-\tau)^{\frac{N(r-\si)}{N(p-2)+p\si}}}\qquad \forall t>\tau,
\]
where $c=c(\gamma,r,q,p,\alpha,N,|\Omega|)$.
\end{proposition}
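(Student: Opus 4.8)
The plan is to combine the universal smallness at large time (from \cref{decinfp} and \cref{tau}) with the regularizing effect already established for $G_k(u)$ in \cref{Prop2rp}, taking $k_0=0$. More precisely, first I would note that by \cref{decinfp} we may fix a time $\tau>0$ large enough that $\al-\ga c_S\|u(\tau)\|_{L^\infty(\Omega)}^{\frac{p-q}{N}}>0$; then, since by \cref{corinftyp} the $L^\infty$-norm is non-increasing in time, we also have $\al-\ga c_S\|u(t)\|_{L^\infty(\Omega)}^{\frac{p-q}{N}}>0$ for all $t\ge\tau$. This replaces the role played by the smallness of $\|G_k(u)\|_{L^\si(\Omega)}$ in \cref{lemGkp}, exactly as spelled out in \cref{tau}. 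In particular $\|u(t)\|_{L^\si(\Omega)}$ plays, for $t\ge\tau$, the part that $\|G_k(u(t))\|_{L^\si(\Omega)}$ played for $k\ge k_0$.

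Next, I would re-run the argument of \cref{Prop2rp} verbatim, but using the test functions $S_{n,\eps}'$ and $S_n'$ from \eqref{test1}--\eqref{test2} applied to $u$ itself (rather than to $G_k(u)$), starting the time integration at $\tau$ instead of $0$. The chain of Hölder inequalities with indices $\left(\frac{1}{p-q},\frac{1}{q-(p-1)}\right)$ and then $\left(\frac{p}{q},\frac{p^*}{p-q},\frac{N}{p-q}\right)$, Sobolev's embedding, and the absorption of the gradient term using $\al>L\ga c_S\|u(\tau)\|_{L^\infty(\Omega)}^{\frac{p-q}{N}}$ go through unchanged; the same estimate $S_{n,\eps}(v)\le c(r)|\Psi_{n,\eps}(v)|^{p^*\omega}|v|^{\si(1-\omega)}$ with $\omega=\frac{(r-\si)(N-p)}{N(r-\si+p-2)+p\si}\in(0,1)$ holds, yielding a differential inequality of the form $y'(s)+c\,y(s)^{\frac{p}{p^*\omega}}\le 0$ with $\frac{p}{p^*\omega}>1$. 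Integrating this between $\tau$ and $t$ (discarding the $y(\tau)$ contribution as in the proof of \eqref{gkr}) and applying Fatou in $n$ and $\eps$ produces
\[
\|u(t)\|_{L^{r}(\Omega)}^{r}\le c\,\frac{\|u(\tau)\|_{L^{\si}(\Omega)}^{\si\frac{N(p-2)+pr}{N(p-2)+p\si}}}{(t-\tau)^{\frac{N(r-\si)}{N(p-2)+p\si}}}\qquad\text{a.e. }t>\tau,
\]
with $c=c(\ga,r,q,p,\al,N,|\Omega|)$; the $|\Omega|$-dependence enters through the Lebesgue inclusion $\|u(\tau)\|_{L^\si(\Omega)}\le|\Omega|^{1/\si-1/\infty}\|u(\tau)\|_{L^\infty(\Omega)}$ being finite, and through the truncation estimate $\|u\|_{L^r}^r\le\|G_{k}(u)\|_{L^r}^r+k^r|\Omega|$ if one prefers to pass through $G_k$.

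Finally, the continuity statement $u\in C((0,T);L^r(\Omega))$ for $r>\si$ follows exactly as in \cref{Prop2rp}: it was in fact already established there (see \eqref{CLr}), so here I would simply recall it, or alternatively re-derive it from the $L^\infty_{\mathrm{loc}}(0,T;L^r)$ bound plus the decomposition \eqref{dec} and Vitali's theorem together with $u\in C([0,T];L^\si(\Omega))$. The main obstacle, such as it is, is purely bookkeeping: one must make sure the constant $c_1=\al-L\ga c_S\|u(\tau)\|_{L^\infty(\Omega)}^{\frac{p-q}{N}}$ stays positive uniformly for all $t\ge\tau$, which is guaranteed by the monotonicity of the $L^\infty$-norm from \cref{corinftyp}; and one must check that the choice of $\tau$ (depending only on $\|u_0\|$, the data, and $|\Omega|$ via \cref{decinfp}) is independent of $r$, so that a single $\tau$ works for every $r>\si$. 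No genuinely new estimate is needed — the content is that \cref{decinfp} lets us take $k_0=0$ in the whole machinery of \cref{Lsi}.
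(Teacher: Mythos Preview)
Your proposal is correct and follows exactly the approach the paper itself indicates: the paper's own proof is simply ``We omit the proof since, thanks to \cref{tau}, it is very similar to the one of \cref{Prop2rp}, up to replacing the smallness condition in \cref{lemGkp} with the one proposed in \cref{tau}.'' You have spelled out precisely this substitution---choosing $\tau$ via \cref{decinfp} so that the $L^\infty$-smallness replaces the $\delta$-argument, then rerunning the test-function computation of \cref{Prop2rp} with $k_0=0$ on the interval $(\tau,t)$---which is all that is required.
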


\begin{proof}
We omit the proof since, thanks to \cref{tau},  it is very similar to the one of \cref{Prop2rp}, up to replacing the smallness condition in \cref{lemGkp} with the one proposed in \cref{tau}.
\end{proof}


\medskip

\begin{theorem}\label{decinf}
Assume \eqref{A1}--\eqref{A2} , \eqref{ID1}  and
\eqref{H0} with \eqref{Q1} . Moreover, let $u$ be a solution of \eqref{P}  in the sense of \cref{defrin1}. Then, the following polynomial decays hold for $2<p<N$
\begin{equation}\label{decup}
\|u(t)\|_{L^{\infty}(\Omega)}\le 
\begin{array}{ll}
\begin{cases}
&\ds C\,\|u(\tau)\|_{L^{\si}(\Omega)}^{\frac{p\si}{N(p-2)+p\si}}{(t-\tau)^{-\frac{N}{N(p-2)+p\si}}}\\
&\ds C_\tau(t-\tau)^{-\frac{1}{p-2}}
\end{cases}
\end{array}
\qquad \,\,\forall \,t>\tau,
\end{equation}
where $h_0$, $h_1$ are defined in \eqref{decgkp}, $C$ is a positive constant depending on $q$, $p$, $N$, $r$, $\al$, $|\Omega|$ and $u_0$ whether $C_\tau$ depends also on $\tau$.
\end{theorem}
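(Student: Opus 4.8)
The plan is to transfer to the whole solution $u$ the $L^{\infty}$ decay already obtained for the level set function $G_k(u)$ in \cref{rmk1}, using the new smallness condition recorded in \cref{tau}. First I would use \cref{decinfp}, by which $\|u(t)\|_{L^{\infty}(\Omega)}\to 0$ as $t\to\infty$, to fix a time $\tau>0$ large enough that $\al-\ga c_S\|u(\tau)\|_{L^{\infty}(\Omega)}^{\frac{p-q}{N}}>0$; the contraction estimate in \cref{corinftyp} then makes $t\mapsto\|u(t)\|_{L^{\infty}(\Omega)}$ nonincreasing on $[\tau,T]$, so this absorption threshold persists for every $t\ge\tau$, and in particular $u(\tau)\in L^{\infty}(\Omega)\subset L^{r}(\Omega)$ for all $r>\si$.

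Next I would rerun, essentially verbatim, the argument of \cref{Prop2rp}, \cref{rmk2} and \cref{rmk1}, now with $u$ in place of $G_k(u)$, with initial time $\tau$ in place of $0$ and with $u(\tau)$ in place of $u_0$. By \cref{tau} the smallness of $\|u(\tau)\|_{L^{\infty}(\Omega)}$ (equivalently of $\|u(\tau)\|_{L^{\si}(\Omega)}$) plays exactly the role that ``$k\ge k_0$'' played before, so \cref{lemGkp} holds with $k_0=0$ on $(\tau,T)$, and the same chain of H\"older and Sobolev inequalities produces
\[
\frac{\text{d}}{\text{d}t}\integrale |u(t)|^{r}\,dx+C_1\integrale |\N \left(1+|u(t)|\right)^{\frac{r+p-2}{p}}|^p\,dx\le 0\qquad\text{a.e. }t>\tau ,
\]
together with the $L^{\si}(\Omega)-L^{r}(\Omega)$ smoothing estimate $\|u(t)\|_{L^{r}(\Omega)}^{r}\le c\,\|u(\tau)\|_{L^{\si}(\Omega)}^{\si\frac{N(p-2)+pr}{N(p-2)+p\si}}(t-\tau)^{-\frac{N(r-\si)}{N(p-2)+p\si}}$ and the continuity $u\in C((\tau,T];L^{r}(\Omega))$ furnished by the $L^{\si}(\Omega)-L^{r}(\Omega)$ regularizing effect for $u$ proved just above. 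These are precisely the hypotheses under which \cite[Theorem $2.1$]{Po2} was invoked in \cref{rmk1}; applying it here to $u$ on $(\tau,T)$ yields the first line of \eqref{decup} with $h_0$, $h_1$ as in \eqref{decgkp}, the constant $C$ inheriting the dependence on $q$, $p$, $N$, $r$, $\al$, $|\Omega|$ from \cref{rmk1} and on $u_0$ through the choice of $\tau$.

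Then, since $2<p<N$ gives $r<r+p-2$, the same differential inequality together with $u\in L^{\infty}((\tau,T];L^{r}(\Omega))\cap L^{r+p-2}(\tau,T;L^{p^*\frac{r+p-2}{p}}(\Omega))$ places us in the setting of \cite[Theorem $2.2$]{Po2}, which yields the universal bound $\|u(t)\|_{L^{\infty}(\Omega)}\le C_\tau(t-\tau)^{-\frac{1}{p-2}}$, with $C_\tau$ depending on $\al$, $\ga$, $N$, $p$, $q$, $r$, $|\Omega|$ and on $\tau$ (hence also on $u_0$). Taking the minimum of the two bounds gives \eqref{decup}.

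The main obstacle will be purely one of bookkeeping: one has to check that every step in the derivation of the differential inequality \eqref{phin} and in the two applications of the Porzio estimates stays valid once ``$k$ large'' is replaced by ``$\tau$ large'', that is, that the smallness of $\|u(\tau)\|_{L^{\infty}(\Omega)}$ really does suffice to absorb the superlinear gradient term into the diffusive term on $(\tau,T)$. This is exactly what \cref{tau} asserts, so no new analytic difficulty appears and the proof reduces to the verifications already carried out for $G_k(u)$ in \cref{Prop2rp}, \cref{rmk2} and \cref{rmk1}.
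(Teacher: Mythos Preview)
Your proposal is correct and is precisely the route the paper itself acknowledges just before its proof: ``this result immediately follows by \cref{rmk1} and \cref{tau}''. The overall strategy---replacing the smallness condition ``$k\ge k_0$'' by ``$\tau$ large'' via \cref{decinfp} and \cref{corinftyp}, then verifying the hypotheses of \cite[Theorems $2.1$ \& $2.2$]{Po2}---is identical.

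The only difference is in how the key differential inequality (the paper's \eqref{disc2}) is derived. You propose to rerun the machinery of \cref{Prop2rp} and \cref{rmk2} (the auxiliary functions $S$, $\Psi$, H\"older with $(\frac{p}{q},\frac{p^*}{p-q},\frac{N}{p-q})$ and Sobolev embedding) with the new smallness. The paper instead gives a shorter, self-contained derivation: it tests directly with $|G_k(u)|^{r-2}G_k(u)$, applies H\"older with $(\frac{p}{q},\frac{p}{p-q})$, uses the algebraic identity $\frac{p}{p-q}(r-1-\frac{q}{p}(r-2))=r+p-2+\frac{p\si}{N}$, and absorbs the superlinear term via the $L^\infty$ bound and the Poincar\'e inequality rather than Sobolev. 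This alternative computation is what the paper means by ``a short guideline which puts in evidence the use of \cref{decinfp}'': it makes the role of the $L^\infty$ decay explicit in the absorption constant $\tilde c=\al(r-1)-\ga c_P\|u(\tau)\|_{L^\infty(\Omega)}^{q-p+1}$. Your approach reaches the same endpoint with slightly more overhead but no new ideas needed; the paper's variant is a cleaner shortcut once $L^\infty$ regularity is in hand.
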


\medskip

Even if this result immediately follows by \cref{rmk1} and \cref{tau}, we presents a short guideline which puts in evidence the use of \cref{decinfp}.

\begin{proof}

We verify that the assumptions of \cite[Theorem $2.1$ \& $2.2$]{Po2} hold.\\
We already know that
\[
u\in C([0,T];L^{\si}(\Omega))\cap C((\tau,T];L^{r}(\Omega))\cap L^{r+p-2}(\tau,T;L^{p^*\frac{r+p-2}{p}}(\Omega))
\]
with
\[
\si<r<p^*\frac{r+p-2}{p}, \qquad \frac{r-\si}{1-\frac{\si}{p^*\frac{r+p-2}{p}}}<r+p-2<p^*\frac{r+p-2}{p}
\]
and that
\begin{equation*}
\|{G_k(u(t))}\|_{L^{\si}(\Omega)}\le  \| G_k(u(\tau))\|_{L^{\si}(\Omega)}\qquad \, \forall  k\ge 0
\end{equation*}
are satisfied thanks to \cref{lemGkp}.\\
We are left with the proof of 
\begin{equation}\label{disc2}
\integrale |G_k(u(t))|^{r}\,dx-\integrale | G_k(u(\tau))|^{r}\,dx+\bar{c}\int_\tau^t \|G_k(u(s))\|_{L^{p^*\frac{r+p-2}{p}}(\Omega)}^{r+p-2}\,ds\le 0 
\end{equation}
for all $k\ge 0$ and where constant $\bar{c}$ does not depend neither on $k$ nor on the solution.\\
To this aim, we choose $|G_k(u)|^{r-2}G_k(u)$, $r> \si$,  as test function. Then, 
we have
\begin{equation*}
\begin{array}{c}
\ds
\frac{1}{r}\frac{\text{d}}{\text{d}s}\integrale |G_k(u(s))|^{r}\,dx+\alpha (r-1)\integrale |\nabla G_k(u(s))|^p|G_k(u(s))|^{r-2}\,dx\\
[3mm]\ds
\le \ga\integrale |\nabla G_k(u(s))|^q|G_k(u(s))|^{r-1}\,dx.
\end{array}
\end{equation*}
We apply H\"older's inequality with indices $\left(\frac{p}{q},\frac{p}{p-q}\right)$ in the r.h.s. obtaining
\[
\begin{array}{c}
\ds
\integrale |\nabla G_k(u(s))|^q|G_k(u(s))|^{r-1}\,dx=\integrale |\nabla G_k(u(s))|^q|G_k(u(s))|^{\frac{q}{p}(r-2)}|G_k(u(s))|^{r-1-\frac{q}{p}(r-2)}\,dx\\
[4mm]\ds
\le \left(
\integrale |\nabla G_k(u(s))|^p|G_k(u(s))|^{r-2}\,dx
\right)^{\frac{q}{p}}
\left(
\integrale |G_k(u(s))|^{\frac{p}{p-q}\left(
r-1-\frac{q}{p}(r-2)
\right)}\,dx
\right)^{\frac{p-q}{p}}.
\end{array}
\]
Then, since the equality $\frac{p}{p-q}\left(r-1-\frac{q}{p}(r-2)\right)=p\frac{r+p-2}{p}+\frac{p\si}{N}$ holds by definition of $\si$, the $L^{\infty}((\tau,T)\times \Omega)$ regularity and then the Poincar\'e inequality give us
\[
\begin{array}{c}
\ds
\left(
\integrale |\nabla G_k(u(s))|^p|G_k(u(s))|^{r-2}\,dx
\right)^{\frac{q}{p}}
\left(
\integrale |G_k(u(s))|^{
p\frac{r+p-2}{p}+\frac{p\si}{N}
}\,dx
\right)^{\frac{p-q}{p}}\\
[4mm]\ds
\le 
\|u(s)\|_{L^\infty(\Omega)}^{q-(p-1)}
\left(
\integrale |\nabla G_k(u(s))|^p|G_k(u(s))|^{r-2}\,dx
\right)^{\frac{q}{p}}
\left(
\integrale |G_k(u(s))|^{
p\frac{r+p-2}{p} 
}\,dx
\right)^{\frac{p-q}{p}}\\
[4mm]\ds
\le 
c_P\|u(s)\|_{L^\infty(\Omega)}^{q-(p-1)}
\integrale |\nabla G_k(u(s))|^p|G_k(u(s))|^{r-2}\,dx\,.
\end{array}
\]
Thus, for $\tau$ large enough such that $\tilde{c}=\al (r-1)-\ga c_P\|u(\tau)\|_{L^{\infty}(\Omega )}^{q-p+1}>0$, we have
\[
\frac{\text{d}}{\text{d}s}\integrale |G_k(u(s))|^{r}\,dx+\frac{\tilde{c}\,r\,p^p}{(r+p-2)^p}\integrale |\nabla[|G_k(u(s))|^{\frac{r+p-2}{p}}]|^p\,dx\le 0 \qquad\forall k\ge0.
\]
Having $r>\si$, an integration in the time variable for $\tau<s\le t$ provides us with \eqref{disc2} with $\bar{c}=\frac{\tilde{c}\,r\,p^p}{(r+p-2)^p} c_S$ in \eqref{disc2}. Finally, being $p>2$, then $r<p\frac{r+p-2}{p}$ and so we invoke \cite[Theorems $2.1$ \& $2.2$]{Po2} getting \eqref{decup}.

\end{proof}

\medskip

\begin{remark}[The critical case $ q=p-\frac{N}{N+1}$]
Some remarks on the critical growth case $q=p-\frac{N}{N+1}$ are in order to be given. Beyond assuming the Leray-Lions structure conditions in \eqref{A}  and the growth assumption \eqref{H0}, we  deal with this case taking into account initial data satisfying
\begin{equation*} 
u_0\in L^{1+\omega}(\Omega),\quad \om>0.
\end{equation*}
As already observed, such value of $q$ implies that the value $\si$ in \eqref{ID1} is equal to $1$. However, due to the criticality of this case, we have to ask for more than just $L^1(\Omega)$ data. In this way, we are allowed to consider solutions as in \cref{defrin1}, with \eqref{pot} replaced by
\[
(1+|u|)^{-\frac{1-\om}{p}}u\in L^p(0,T;W^{1,p}_0(\Omega)).
\]
Then, we proceed as before and we prove that \eqref{decup} holds with $\si=1+\om$.
\end{remark}

\section{The growth range with $L^1(\Omega)$ data}\label{L1reg}

\setcounter{equation}{0}
\renewcommand{\theequation}{\thesection.\arabic{equation}}

\numberwithin{equation}{section}

We now deal with the last case of superlinear growth \eqref{Q2}  that we recall being
\[
\frac{2N}{N+1}<p<N\quad\text{and}\quad\max\left\{\frac{p}{2},\frac{p(N+1)-N}{N+2} \right\}<q<p-\frac{N}{N+1}.
\]

We are going to deal with this case assuming $L^1(\Omega)$ data. In particular, since we can no longer require \eqref{pot}, we will ask for the asymptotic energy condition
\begin{equation}\label{ET}\tag{ET}
\lim_{n\to \infty}\frac{1}{n}\iint_{\{n\le |u|\le 2n\}}a(t,x,u,\N u)\cdot \N u=0.
\end{equation}

We consider the following notion of solution.
\begin{definition}\label{defrin2}
We say that a function $u\in {\mathcal{T}^{1,p}_0(Q_T)}$ is a solution of \eqref{P}  if satisfies \eqref{ET} and 
\begin{equation*}
H(t,x,\nabla u)\in L^1(Q_T), 
\end{equation*}
\begin{equation}\label{sr22}
\begin{array}{c}
\ds
-\integrale S(u_0)\vp (0,x)\,dx+\iint_{Q_T}-S(u)\vp_t+a(t,x,u,\N u)\cdot \N (S'(u)\vp)\,dx\,ds\\
[4mm]\ds
=\iint_{Q_T}H(t,x,\N u)S'(u)\vp\,dx\,ds
\end{array}
\end{equation}
for every $S\in W^{2,\infty}(\R)$ such that $S'(\cdot)$ has compact support and for every test function $\vp\in C_c^\infty([0,T)\times \Omega)$ such that $ S'(u)\vp\in L^p(0,T;W^{1,p}_0(\Omega))$ (i.e. $S'(u)\vp$ is equal to zero on $(0,T)\times\partial\Omega$).
\end{definition}

\medskip

The existence of solutions of \eqref{pb} has been proved in \cite[Theorem $6.5$]{M}.


\subsection{$L^1-L^1$ and Marcinkiewicz regularities }

\renewcommand{\theequation}{\thesection.\arabic{equation}}

\numberwithin{equation}{section}

As seen in \cref{Lsi}, the crucial step relies on a $\de$ argument which allows us to
move the attention from \eqref{P} to its "coercive version", i.e. \eqref{P} read in terms of $G_k(u)$. However, due to the low regularity of the initial data, we lose the purely contractive relation between $\|G_k(u(t))\|_{L^{1}(\Omega)}$ and $\|G_k(u_0)\|_{L^{1}(\Omega)}$. 

\medskip

\begin{lemma}\label{lemGkr2}
Assume  \eqref{ID2}, \eqref{A1}--\eqref{A2} with $p>\frac{2N}{N+1}$ and \eqref{H0} with \eqref{Q2}. Moreover let $u$ be a solution of \eqref{P}  in the sense of \cref{defrin2}. Then, for every $k>0$ so that
\[
\|G_k(u_0)\|_{L^{1}(\Omega)}< \de,
\]
where $\de>0$ is arbitrary fixed, we have
\begin{equation*}
\|G_k(u(t))\|_{L^{1}(\Omega)}<c\,\de^{\frac{1}{2}}\quad \forall  t\in[0,T],
\end{equation*}
for some positive constant $c$ depending on $|\Omega|$, $N$, $p$ and $q$.
\end{lemma}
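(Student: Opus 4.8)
The strategy is to transpose the $\delta$ argument of \cref{lemGkp} to the $L^1$ framework. The obstruction is that now $\sigma<1$, so neither the regularity class \eqref{pot} nor the space $L^\sigma(\Omega)$ is available; the purely contractive estimate of \cref{lemGkp} must therefore be replaced by a Boccardo--Gallou\"et/Gagliardo--Nirenberg estimate combined with a Cauchy--Schwarz interpolation, and it is this last ingredient that degrades $\delta$ into $\delta^{1/2}$. Throughout I would use the renormalized formulation \eqref{sr22} written for $G_k(u)$, tested against bounded truncation-type functions $S'(\cdot)$; although these are not compactly supported, one regularizes them as $\theta_h(G_k(u))\,S'(G_k(u))$ with $\theta_h$ as in \eqref{suppcmpt}, invokes \eqref{A1} and \eqref{H0}, and lets $h\to\infty$, the passage being justified here by \eqref{ET} in place of \eqref{pot}.

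First I would derive two estimates. Testing with $S'=\tfrac1\lambda T_\lambda$ and letting $\lambda\to0^+$ (the coercive term being discarded) gives, after integration in time,
\[
\|G_k(u(t))\|_{L^1(\Omega)}\le\mathcal B(t):=\|G_k(u_0)\|_{L^1(\Omega)}+\gamma\iint_{Q_t}|\nabla G_k(u)|^q\,dx\,ds ;
\]
keeping $\lambda$ fixed and absorbing the lower-order part of the right-hand side by Young's inequality gives the Boccardo--Gallou\"et bound $\iint_{Q_t\cap\{|G_k(u)|\le\lambda\}}|\nabla G_k(u)|^p\le \tfrac{C}{\alpha}\lambda\,\mathcal B(t)$ for every $\lambda>0$. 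Note that $\mathcal B(\cdot)$ is finite (the gradient summability being part of \cref{defrin2}), continuous and non-decreasing, with $\mathcal B(0)=\|G_k(u_0)\|_{L^1(\Omega)}<\delta$, and that $M(t):=\sup_{s\le t}\|G_k(u(s))\|_{L^1(\Omega)}\le\mathcal B(t)$. Interpolating the $L^\infty(0,t;L^1(\Omega))$-bound on $G_k(u)$ with the sublevel bound via the parabolic Gagliardo--Nirenberg inequality yields the distributional estimates $|\{|G_k(u)|>\mu\}\cap Q_t|\le C\,\mathcal B(t)M(t)^{p/N}\mu^{1-p\frac{N+1}{N}}$ and, after the usual splitting, $|\{|\nabla G_k(u)|>\mu\}\cap Q_t|\le C\,\mathcal B(t)M(t)^{\frac1{N+1}}\mu^{-(p-\frac N{N+1})}$; since $q<p-\tfrac N{N+1}$ (the upper endpoint of \eqref{Q2}) this last estimate integrates to
\[
\iint_{Q_t}|\nabla G_k(u)|^q\,dx\,ds\le C\,\bigl(\mathcal B(t)\,M(t)^{\frac1{N+1}}\bigr)^{\frac{q(N+1)}{p(N+1)-N}} .
\]

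Inserting $M(t)\le\mathcal B(t)$ and re-substituting into the definition of $\mathcal B(t)$ produces the self-improving inequality $\mathcal B(t)\le\delta+C_0\,\mathcal B(t)^{\kappa}$ with $\kappa=\tfrac{q(N+2)}{p(N+1)-N}$, and the lower bound $q>\tfrac{p(N+1)-N}{N+2}$ of \eqref{Q2} is precisely the condition $\kappa>1$; since $\mathcal B(\cdot)$ is continuous and $\mathcal B(0)<\delta$, a standard continuity argument (which needs $\kappa>1$, and $\delta$ below a threshold depending on $C_0$) gives $\mathcal B(t)\le C_1\delta$ on $[0,T]$. To recover the exponent $\tfrac12$ I would then test once more with a smooth approximation of $T_1(G_k(u))$: writing $\widetilde T_1(z)=\int_0^z T_1$, from $|T_1|\le1$ one gets $\int_\Omega\widetilde T_1(G_k(u(t)))\,dx\le\mathcal B(t)\le C_1\delta$, and since $\widetilde T_1(z)=z^2/2$ for $|z|\le1$ and $\widetilde T_1(z)\ge|z|/2$ for $|z|>1$, Cauchy--Schwarz on $\{|G_k(u(t))|\le1\}$ gives $\|G_k(u(t))\|_{L^1(\Omega)}\le|\Omega|^{1/2}(2C_1\delta)^{1/2}+2C_1\delta\le c\,\delta^{1/2}$. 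The continuity of $t\mapsto\|G_k(u(t))\|_{L^1(\Omega)}$ --- obtained as in \cref{lemGkp} from \cite[Theorem~1.1]{P1}, the decomposition \eqref{dec} and Vitali's theorem --- turns the a.e.\ bound into a pointwise one and makes the bootstrap rigorous.

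The delicate part is the passage from the sublevel-set energy estimate to the self-improving inequality: one must run the parabolic Gagliardo--Nirenberg/Marcinkiewicz estimates for $G_k(u)$ and $\nabla G_k(u)$ with constants independent of $\lambda$ and $k$, so that the exponents land exactly on $\kappa>1$ under the hypothesis $q>\tfrac{p(N+1)-N}{N+2}$, and then close the continuity bootstrap for $\mathcal B(\cdot)$. Everything else is a careful but essentially routine adaptation of \cref{lemGkp} and of standard $L^1$-data parabolic theory.
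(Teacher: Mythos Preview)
Your argument is correct and actually yields the stronger bound $\|G_k(u(t))\|_{L^1(\Omega)}\le C_1\delta$ directly from $M(t)\le\mathcal B(t)\le C_1\delta$, so the extra step with $\widetilde T_1$ is redundant (for $\delta\le1$ one already has $C_1\delta\le C_1\delta^{1/2}$). Two small imprecisions: the summability $|\nabla u|^q\in L^1(Q_T)$ is not literally part of \cref{defrin2} but follows from the Marcinkiewicz regularity of \cref{marc}; and no Young inequality is needed for the sublevel energy bound --- $|T_\lambda|\le\lambda$ and $\widetilde T_\lambda(z)\le\lambda|z|$ suffice.

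The paper, however, takes a genuinely different route. Instead of truncations and Boccardo--Gallou\"et/Marcinkiewicz estimates, it tests with the single nonlinear function $S'(v)=\bigl[1-(1+|G_k(u)|)^{-b}\bigr]\mathrm{sign}(u)$ for the specific exponent $b=(p-q)(N+1)/N-1$, chosen so that one Gagliardo--Nirenberg interpolation bounds the right-hand side by $\gamma\,c_{GN}\|G_k(u)\|_{L^\infty(0,t;L^1)}^{(p-q)/N}$ times the weighted energy $\iint(1+|G_k(u)|)^{-(b+1)}|\nabla G_k(u)|^p$; the $\delta$ argument then absorbs this factor directly, without an intermediate self-improving inequality. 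The exponent $\tfrac12$ emerges there because the primitive $S$ behaves quadratically near the origin. The practical advantage of the paper's route is that its absorption constant is independent of $T$, whereas your $C_0$ inherits a factor $|Q_T|^{1-q(N+1)/(p(N+1)-N)}$ from integrating the Marcinkiewicz tail, so that your threshold $\delta_0$ degenerates as $T\to\infty$; this is harmless for the Lemma as stated on $[0,T]$, but the $T$-independence is what allows the long-time decay applications announced at the end of \cref{L1reg}.
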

\medskip
Before proving \cref{lemGkr2}, we recall some standard regularity results in renormalized settings with $L^1$-data.

\medskip

\begin{proposition}\label{marc}
Assume  \eqref{ID2}, \eqref{A1}--\eqref{A2} with $p>\frac{2N}{N+1}$ and \eqref{H0} with \eqref{Q2}. Moreover, let $u$ be a solution of \eqref{P}  in the sense of \cref{defrin2}. Then we have that  
\[
u\in C([0,T];L^1(\Omega))\cap M^{\frac{p(N+1)-N}{N}}(Q_T)
\]
and
\begin{equation}\label{mar}
|\N u|\in M^{\frac{p(N+1)-N}{N+1}}(Q_T).
\end{equation}
\end{proposition}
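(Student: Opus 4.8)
\bigskip
\noindent\textbf{Plan of proof.} Since a solution in the sense of \cref{defrin2} satisfies $H(t,x,\nabla u)\in L^1(Q_T)$, for the purpose of a priori estimates \eqref{P} behaves here as a parabolic problem with $L^1$ data (initial datum $u_0\in L^1(\Omega)$ and source in $L^1(Q_T)$); the plan is therefore to derive the classical truncation estimate and then to invoke the standard Marcinkiewicz regularity theory for such problems, in the spirit of \cite{BDGM}.

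\smallskip
\emph{Step 1: the truncation estimate.} Fix $k>0$. In \eqref{sr22} I would take $\varphi\equiv 1$ and $S=S_{k,m}\in W^{2,\infty}(\R)$ with $S_{k,m}'$ compactly supported, $|S_{k,m}'|\le k$, and $S_{k,m}'\to T_k$ pointwise. Using coercivity \eqref{A1} on the diffusion term, the growth bound \eqref{H0} on the right-hand side, and then letting $m\to\infty$ while invoking \eqref{ET} to discard the remainder terms supported on $\{m\le|u|\le 2m\}$, one obtains, with $\Theta_k(s)=\int_0^s T_k(\tau)\,d\tau$,
\[
\sup_{t\in[0,T]}\int_\Omega \Theta_k(u(t))\,dx+\al\iint_{Q_T}|\nabla T_k(u)|^p\,dx\,dt\ \le\ k\big(\|u_0\|_{L^1(\Omega)}+\|H(\cdot,\cdot,\nabla u)\|_{L^1(Q_T)}\big)=:kM .
\]
Since $\Theta_k(s)\ge \tfrac12|T_k(s)|^2$, this yields at once $\sup_{t}\|T_k(u(t))\|_{L^2(\Omega)}^2\le 2kM$ and $\iint_{Q_T}|\nabla T_k(u)|^p\,dx\,dt\le kM/\al$.

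\smallskip
\emph{Step 2: Marcinkiewicz bounds and continuity.} With these two estimates I would run the classical level-set argument. For a.e.\ $t$, Sobolev's inequality (recall $p<N$) gives $\|T_k(u(t))\|_{L^{p^*}(\Omega)}\le c_S\|\nabla T_k(u(t))\|_{L^p(\Omega)}$ with $p^*=\tfrac{Np}{N-p}$; interpolating the resulting bound for $T_k(u)$ in $L^p(0,T;L^{p^*}(\Omega))$ with the one in $L^\infty(0,T;L^2(\Omega))$ controls $\iint_{Q_T}|T_k(u)|^{p\frac{N+2}{N}}\,dx\,dt$ by a constant times $(kM)^{1+\frac pN}$, whence $\meas\{(t,x)\in Q_T:|u|>k\}\le C\,k^{-\frac{p(N+1)-N}{N}}$, i.e.\ $u\in M^{\frac{p(N+1)-N}{N}}(Q_T)$ — here the hypothesis $p>\tfrac{2N}{N+1}$ is precisely what makes this exponent exceed $1$. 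Splitting $\{|\nabla u|>\lambda\}$ according to $\{|u|\le k\}$ and $\{|u|>k\}$, using $\lambda^p\,\meas(\{|\nabla u|>\lambda\}\cap\{|u|\le k\})\le kM/\al$, and balancing the two contributions by the choice $k\sim\lambda^{N/(N+1)}$, one gets $\meas\{|\nabla u|>\lambda\}\le C\,\lambda^{-\frac{p(N+1)-N}{N+1}}$, which is \eqref{mar}. Finally $u\in C([0,T];L^1(\Omega))$ follows as in the proof of \cref{lemGkp}: the estimate of Step 1 gives equi-integrability in time of $\{u(t)\}$ and, with the equation, the required compactness, and one concludes via \cite[Theorem~1.1]{P1}.

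\smallskip
\emph{Main obstacle.} The delicate point is Step 1: justifying that $T_k$-type renormalizations are admissible in \eqref{sr22} and that the error terms vanish in the limit, which is exactly where \eqref{ET} enters essentially (and the reason it was imposed in \cref{defrin2}). Once that estimate is secured, Step 2 is the routine $L^1$-data level-set computation together with a citation.
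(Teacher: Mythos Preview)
Your Marcinkiewicz argument (Steps~1--2) is correct and in fact more detailed than the paper's own proof, which simply cites \cite{BlMu,BlP} for both bounds. The truncation estimate and the level-set computation you outline are exactly the standard route behind those references; in particular your use of the $L^\infty(0,T;L^2(\Omega))$ bound on $T_k(u)$ (rather than the more common $L^\infty(0,T;L^1(\Omega))$ bound on $u$) is a legitimate variant that yields the same exponent after balancing.

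There is, however, a genuine imprecision in your treatment of $u\in C([0,T];L^1(\Omega))$. You write that ``the estimate of Step~1 gives equi-integrability in time of $\{u(t)\}$'', but it does not: the inequality $\sup_t\int_\Omega \Theta_k(u(t))\,dx\le kM$ only yields $\sup_t\|u(t)\|_{L^1(\Omega)}\le C$, not $\sup_t\|G_k(u(t))\|_{L^1(\Omega)}\to 0$ as $k\to\infty$. To get the tail bound one must test, in \eqref{sr22}, with (approximations of) $\mathrm{sign}(G_k(u))$ rather than $T_k(u)$ --- concretely, $S'(u)=T_\omega(G_k(u))/\omega$ with $\omega\to 0$ --- which is precisely what the paper does. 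This produces
\[
\int_\Omega |G_k(u(t))|\,dx\ \le\ \int_\Omega |G_k(u_0)|\,dx+\gamma\iint_{Q_T}|\nabla G_k(u)|^q\,dx\,ds,
\]
and it is here that \eqref{Q2} is actually used: since $q<p-\frac{N}{N+1}=\frac{p(N+1)-N}{N+1}$, the gradient Marcinkiewicz bound \eqref{mar} forces the right-hand side to vanish as $k\to\infty$. Then \cite[Theorem~1.1]{P1} and Vitali close the argument as you indicate. So the fix is small, but your sketch as written misattributes the source of equi-integrability and omits the one place where the upper bound on $q$ enters.
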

\begin{proof}
The Marcinkievicz regularities follow from \cite{BlMu,BlP}. 

As far as the continuity of $u(t)$ in $L^1(\Omega)$ is concerned, let $S'(u)\vp=\frac{T_\omega(G_k(u))}{\omega}$, $\vp=1$ and $\omega>0$, in \eqref{sr22}. Again, we note that such a test function can be made rigorous up to be multiplied by ${\theta_n}(G_k(u))$ and recalling the asymptotic condition \eqref{ET}.
Then the limit for $\omega\to 0$ provides us with the inequality
\begin{equation}\label{diss}
\integrale |G_k(u(t))|\,dx\le \integrale |G_k(u_0)|\,dx+\gamma\iint_{Q_T}|\N G_k(u)|^q\,dx\,dt.
\end{equation}
The gradient regularity in \eqref{mar} and \eqref{diss} imply that $\|G_k(u(t))\|_{L^1(\Omega)}\to 0$ when $k\to \infty$. Since we already know from \cite[Theorem $1.1$]{P1} that $T_k(u)\in C([0,T];L^1(\Omega))$ for every $k>0$, then
the Vitali-Lebesgue Theorem implies the continuity regularity $u\in C([0,T];L^1(\Omega))$.
\end{proof}

\begin{proof}[Proof of \cref{lemGkr2}]
We set 
\[
S_n'(u)\vp=\left[1-\frac{1}{(1+|G_k(u)|)^b}\right]\text{sign}(u)\q\t{with}\q b=\frac{(p-q)(N+1)}{N}-1,
\] 
$\vp=1$  in \eqref{sr22}. Note that $0<b<\frac{2}{N}$ by \eqref{Q2}. We justify the above choice reasoning as in \cref{marc}. Then, recalling \eqref{ET}, we get
\begin{equation}\label{in2}
\begin{array}{c}
\ds
\integrale S_n(u(t))\,dx+\al b\iint_{Q_t}\frac{|\N G_k(u)|^p}{(1+|G_k(u)|)^{b+1}}\,dx\,ds
\le \ga\iint_{Q_t}|\N G_k(u)|^qS_n'(u)\,dx\,ds\,.
\end{array}
\end{equation}
We are going to deal with the integral in the r.h.s.. An application of Young's inequality with indices $\left(\frac{p}{q},\frac{p}{p-q}\right)$ gives us
\begin{equation*}
\begin{array}{c}
\ds
\ga\iint_{Q_t}|\N G_k(u)|^qS_n'(u)\,dx\,ds\\
[4mm]
\ds
\le \frac{\al \,b}{2}\iint_{Q_t}\frac{|\N G_k(u)|^p}{(1+|G_k(u)|)^{b+1}}\,dx\,ds+c\iint_{Q_t}(1+|G_k(u)|)^{\frac{q(b+1)}{p-q}}\left(S_n'(u)\right)^{\frac{p}{p-q}}\,dx\,ds\\
[4mm]\ds
\le \frac{\al\,b}{2}\iint_{Q_t}\frac{|\N G_k(u)|^p}{(1+|G_k(u)|)^{b+1}}\,dx\,ds+c\iint_{Q_t}(1+|G_k(u)|)^{\frac{q(b+1)}{p-q}-1}|G_k(u)|\,dx\,ds,
\end{array}
\end{equation*}
being $\ds S_n'(u)<\frac{|G_k(u)|}{(1+|G_k(u)|)}$ because $b<1$ and for $c=c(\al,\ga,q,p,N)$. 
Then, we improve \eqref{in2} with
\begin{equation}\label{in3}
\begin{array}{c}
\ds
\integrale S_n(u(t))\,dx+\frac{\al\,b}{2}\iint_{Q_t}\frac{|\N G_k(u)|^p}{(1+|G_k(u)|)^{b+1}}\,dx\,ds
\\
[4mm]
\ds
\le c\iint_{Q_t}(1+|G_k(u)|)^{\frac{q(b+1)}{p-q}-1}|G_k(u)|\,dx\,ds
+\integrale |G_k(u_0)|\,dx.
\end{array}
\end{equation}
The choice of $b$ implies that $\frac{q(b+1)}{p-q}=q\frac{N+1}{N}$ which is, in particular, the Gagliardo-Nirenberg exponent of the spaces
\[
L^\infty(0,T;L^1(\Omega))\cap  L^q(0,T;W_0^{1,q}(\Omega)). 
\]
Since \cref{marc} provides us with the above regularities, we are allowed to consider the limit on $n\to\infty$ in \eqref{in3} getting
\begin{equation}\label{ooo}
\begin{array}{c}
\ds
\integrale S(u(t))\,dx+\frac{\al\,b}{2}\iint_{Q_t}\frac{|\N G_k(u)|^p}{(1+|G_k(u)|)^{b+1}}\,dx\,ds
\\
[4mm]
\ds
\le c\iint_{Q_t}(1+|G_k(u)|)^{\frac{q(b+1)}{p-q}-1}|G_k(u)|\,dx\,ds
+\integrale |G_k(u_0)|\,dx,
\end{array}
\end{equation}
where $S(v)=\int_0^v 1-\frac{1}{(1+|y|)^b}\,dy$.\\
The above estimate implies that the l.h.s. of \eqref{ooo} is bounded and, in particular, that  
$$
(1+|G_k(u)|)^{\frac{-(b+1)}{p}}|G_k(u)| \in L^\infty(0,T;L^{\frac{p}{p-1-b}}(\Omega))\cap L^p(0,T;W_0^{1,p}(\Omega)).
$$
Since 
\[
\frac{p}{p-1-b}<p\frac{N+\frac{p}{p-1-b}}{N}<p^*\qq\t{and}\qq p<p\frac{N+\frac{p}{p-1-b}}{N}
\]
thanks to \eqref{Q2} and the definition of $b$, 
we invoke again Gagliardo-Nirenberg regularity results, obtaining the regularity
$$(1+|G_k(u)|)^{\frac{-(b+1)}{p}}|G_k(u)|\in L^{\lm}(Q_T)\q \t{where}\q \lm=p\frac{N+\frac{p}{p-1-b}}{N}.$$
In particular, the related Gagliardo-Nirenberg inequality can be estimated as
\begin{equation}\label{gn}
\ds \int_0^T\|(1+|G_k(u)|)^{\frac{-(b+1)}{p}}|G_k(u)|\|_{L^{\lm}(\Omega)}^{\lm}\,dt\\
\le c_{GN} \|G_k(u)\|_{L^\infty(0,T;L^{1}(\Omega))}^{\frac{p}{N}}\iint_{Q_T}\frac{|\N G_k(u)|^p}{(1+|G_k(u)|)^{b+1}}\,dx\,dt. 
\end{equation}
Let us come back to \eqref{in3}. 
Since $$ \frac{q(b+1)}{p-q}=\left(-\frac{(b+1)}{p}+1\right)\lm$$
by definitions of $b$ and $\lm$, we estimate the r.h.s. of \eqref{in3} taking advantage of \eqref{gn} as follows:
\begin{equation*}
\ga\iint_{Q_t}|\N G_k(u)|^qS_n'(u)\,dx\,ds\le \ga\, c_{GN}\|G_k(u)\|_{L^\infty(0,T;L^{1}(\Omega))}^{\frac{p-q}{N}}\iint_{Q_T}\frac{|\N G_k(u)|^p}{(1+|G_k(u)|)^{b+1}}\,dx\,dt.
\end{equation*}
We thus deduce
\begin{equation*}
\begin{array}{c}
\ds
\integrale S(u(t))\,dx
+\al\,b\iint_{Q_t}\frac{|\N G_k(u)|^p}{(1+|G_k(u)|)^{b+1}}\,dx\,ds\\
[4mm]
\ds
\le \ga\, c_{GN}\|G_k(u)\|_{L^\infty(0,t;L^{1}(\Omega))}^{\frac{p-q}{N}}\iint_{Q_t}\frac{|\N G_k(u)|^p}{(1+|G_k(u)|)^{b+1}}\,dx\,ds
+\integrale |G_k(u_0)|\,dx,
\end{array}
\end{equation*}
where the limit on $n\to\infty$ has be taken too.\\

\noindent
\textit{The $\de$ argument.}\\
We observe that $S(v)\ge c_1\min\{v,v^2\}$, where the constant $c_1>1$ depends only on $N$, $p$ and $q$.\\
Then, we proceed as in \cref{lemGkp} (see the $\de$ argument) fixing a small value $\de_0$ so that the inequality
$\ds \al\,b-\ga\,c_{GN}(c_0\de^\frac{1}{2})^{\frac{p-q}{N}}>0$ holds for $ c_0=2\max\left\{1/c_1,\left(|\Omega|/c_1\right)^\frac{1}{2} \right\}$ and $\de<\de_0$.
Moreover, we let $k_0$ large enough so that
\begin{equation}\label{**}
\|G_k(u_0)\|_{L^1(\Omega)}<\de\qquad\forall  k\ge k_0
\end{equation}
and define
\[
T^*:=\sup\{\tau> 0: \,\|G_k(u(s))\|_{L^{1}(\Omega)}\le c_0\de^\frac{1}{2}, \,\,\forall  s\le \tau  \} \qquad\forall k\ge k_0.
\]
Notice that $T^*>0$ thanks to the continuity result proved in \autoref{marc}. The above choice of $\de$ and \eqref{**} imply
\begin{equation*}
\integrale S(G_k(u(t)))\,dx\le \integrale |G_k(u_0)|\,dx<\de\qquad\forall k\ge k_0,\,\,\forall t\le T^*.
\end{equation*}
Therefore, by definition of $c_1$ and $c_0$, we obtain
\begin{equation}\label{contr}
\begin{split}
\integrale |G_k(u(t))|\,dx & \le \int_{\{ |G_k(u(t))|>1\}}|G_k(u(t))|\,dx+|\Omega|^{\frac{1}{2}}\left( \int_{\{|G_k(u(t))|\le 1\}} |G_k(u(t))|^2\,dx\right)^\frac{1}{2}\\
&\le \frac{1}{c_1}\integrale S(G_k(u(t)))\,dx+\left(\frac{|\Omega|}{c_1}\right)^{\frac{1}{2}}\left(\integrale S(G_k(u(t)))\,dx\right)^{\frac{1}{2}}\\
&< c_0\de^\frac{1}{2}
\end{split}
\end{equation}
for every $t\le T^*$. Finally, a contradiction argument extends the inequality in \eqref{contr} to the whole time interval.
\end{proof}
\medskip
\begin{remark}\label{contr2}
Again, if $u_0\in L^\infty(\Omega)$, then the $\de$ argument provides us with a contraction result in $L^\infty(\Omega)$ (see also \cref{corinftyp}).
\end{remark}

\subsection{The regularizing effect $L^1-L^r$ and long time decays}

\renewcommand{\theequation}{\thesection.\arabic{equation}}

\numberwithin{equation}{section}

\begin{proposition}\label{Proprin1}
Assume  \eqref{ID2},  \eqref{A1}--\eqref{A2} with $p>\frac{2N}{N+1}$ and \eqref{H0} with \eqref{Q2}. Moreover, let $u$ be a solution of \eqref{P}  in the sense of \cref{defrin2}. Then the claim of \cref{Prop2rp} holds true.
\end{proposition}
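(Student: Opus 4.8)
The plan is to retrace the proof of \cref{Prop2rp} with $\si=1$, using \cref{lemGkr2} in place of \cref{lemGkp}, and to make the renormalization rigorous exactly as in the proof of \cref{lemGkr2}: every test function below is first multiplied by $\te_h(G_k(u))$ and the limit $h\to\infty$ is controlled through \eqref{A1}, \eqref{H0} and the asymptotic energy condition \eqref{ET}. The a priori integrability needed to pass the subsequent limits is supplied by the Marcinkiewicz regularity of \cref{marc} and by the Gagliardo--Nirenberg bootstrap already carried out in the proof of \cref{lemGkr2}.

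First I would fix $r>1$ and, for fixed $n,\eps>0$, choose in \eqref{sr22} the $\si=1$ limiting instance of \eqref{test1}, namely $S'(v)=S_{n,\eps}'(v)=\int_0^v(\eps+|y|)^{-2}|y|\,T_n(y)^{r-1}\,dy$ with $\vp=1$ (it verifies \eqref{S1}--\eqref{S2} for fixed $n$, as in \cref{Prop2rp}). Setting $\Psi_{n,\eps}(v)=\int_0^v(S_{n,\eps}''(y))^{1/p}\,dy$, the same chain of H\"older inequalities (with indices $\left(\frac{1}{p-q},\frac{1}{q-(p-1)}\right)$ and $\left(\frac{p}{q},\frac{p^*}{p-q},\frac{N}{p-q}\right)$), Sobolev's embedding and the identity $\si=\frac{N(q-(p-1))}{p-q}=1$ (equivalently $q-(p-1)=\frac{p-q}{N}$) used in \cref{Prop2rp} leads to
\[
\frac{\text{d}}{\text{d}t}\integrale S_{n,\eps}(G_k(u(t)))\,dx+\al\integrale|\N\Psi_{n,\eps}(G_k(u))|^p\,dx\le L\ga c_S\Bigl(\sup_{s\in(0,T)}\|G_k(u(s))\|_{L^1(\Omega)}\Bigr)^{\frac{p-q}{N}}\integrale|\N\Psi_{n,\eps}(G_k(u))|^p\,dx.
\]
Then I would invoke \cref{lemGkr2}: fixing $\de<\de_0$ and then $k_0$ large (and independent of $r$) so that $\|G_{k_0}(u_0)\|_{L^1(\Omega)}<\de$ and $\al-L\ga c_S\,(c\de^{1/2})^{\frac{p-q}{N}}>0$, for every $k\ge k_0$ one has $\sup_{s}\|G_k(u(s))\|_{L^1(\Omega)}<c\de^{1/2}$, so the right-hand side is absorbed and we recover the analogue of \eqref{phi},
\[
\frac{\text{d}}{\text{d}t}\integrale S_{n,\eps}(G_k(u(t)))\,dx+c_1\integrale|\N\Psi_{n,\eps}(G_k(u))|^p\,dx\le 0,\qquad c_1:=\al-L\ga c_S\,(c\de^{1/2})^{\frac{p-q}{N}}>0,\ \ \forall k\ge k_0.
\]

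Next I would repeat the $1<\si<2$ branch of \cref{Prop2rp} with $\si=1$. With $\om=\frac{(r-1)(N-p)}{N(r+p-3)+p}\in(0,1)$, the pointwise bound $S_{n,\eps}(v)\le c(r)\,|\Psi_{n,\eps}(v)|^{p^*\om}|v|^{1-\om}$, H\"older's inequality, Sobolev's embedding and once more \cref{lemGkr2} (which bounds $\sup_s\|G_k(u(s))\|_{L^1(\Omega)}$ by $c\de^{1/2}$, in place of the pure contraction available in \cref{lemGkp}) turn the last inequality into the autonomous one
\[
\frac{\text{d}}{\text{d}s}\integrale S_{n,\eps}(G_k(u(s)))\,dx+c_2\Bigl(\integrale S_{n,\eps}(G_k(u(s)))\,dx\Bigr)^{\frac{p}{p^*\om}}\le 0,\qquad \frac{p}{p^*\om}>1,
\]
the last inequality between exponents being equivalent to $p>\frac{2N}{N+1}$, which is in force by \eqref{Q2}. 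Integrating on $(0,t]$, applying Fatou's lemma first in $n$ and then in $\eps$, and unwinding the exponents (recall $N(p-2)+p\si=p(N+1)-2N>0$, again because $p>\frac{2N}{N+1}$), we recover \eqref{starstar} with $\si=1$, i.e. the decay $\|G_k(u(t))\|_{L^r(\Omega)}^r\le c\,t^{-\frac{N(r-1)}{N(p-2)+p}}$ for all $k\ge k_0$ and all $t\in(0,T)$. The continuity \eqref{CLr} then follows from Vitali's theorem, combining this decay with $u\in C([0,T];L^1(\Omega))$ from \cref{marc}; and the short time decay \eqref{std} follows from the splitting $u=G_{k_0}(u)+T_{k_0}(u)$ of \eqref{dec}, which gives $\|u(t)\|_{L^r(\Omega)}^r\le\|G_{k_0}(u(t))\|_{L^r(\Omega)}^r+k_0^r|\Omega|$ and hence $\|u(t)\|_{L^r(\Omega)}^r\le C\,t^{-\frac{N(r-1)}{N(p-2)+p}}$ for $t\in(0,t_0]$.

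I expect the main obstacle to be the rigorous justification of the renormalization rather than the formal computation: the test functions $S_{n,\eps}'$ are neither bounded nor compactly supported when $\si=1$, so every displayed identity must be obtained from the truncated version $\te_h(G_k(u))\,S_{n,\eps}'(G_k(u))$, with the $h\to\infty$ limit controlled by \eqref{ET}, and one must check that $\integrale S_{n,\eps}(G_k(u(t)))\,dx<\infty$ (using $u\in M^{\frac{p(N+1)-N}{N}}(Q_T)$ and $|\N u|\in M^{\frac{p(N+1)-N}{N+1}}(Q_T)$ from \cref{marc}) before the limits $n\to\infty$ and $\eps\to0$ can be taken. A secondary bookkeeping point is that \cref{lemGkr2} supplies only the weaker bound $\|G_k(u(t))\|_{L^1(\Omega)}<c\de^{1/2}$ instead of a genuine contraction, so the constants in \eqref{starstar}--\eqref{std} end up depending also on $\de$ (equivalently, on the equi-integrability of $u_0$ in $L^1(\Omega)$); the powers of $t$, however, are precisely those of \cref{Prop2rp} with $\si=1$.
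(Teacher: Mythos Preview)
Your overall strategy---retrace \cref{Prop2rp} and replace \cref{lemGkp} by \cref{lemGkr2}---is exactly the paper's. But two points fail as written.

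\medskip

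\textbf{A minor slip.} You assert ``the identity $\si=\frac{N(q-(p-1))}{p-q}=1$''. Under \eqref{Q2} one has $q<p-\frac{N}{N+1}$, hence $\si<1$, not $=1$. The triple H\"older with $(\frac{p}{q},\frac{p^*}{p-q},\frac{N}{p-q})$ therefore lands on $(\int|G_k(u)|^\si)^{(p-q)/N}$ with $\si<1$; this can of course be bounded by $C\|G_k(u)\|_{L^1(\Omega)}^{q-(p-1)}$ via Jensen on the bounded domain, and then \cref{lemGkr2} applies. So the absorption to \eqref{phi} survives, but the exponent you wrote and the identity you invoked are incorrect.

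\medskip

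\textbf{The real gap.} Your test function $S_{n,\eps}'(v)=\int_0^v(\eps+|y|)^{-2}|y|\,T_n(y)^{r-1}\,dy$ is \emph{not} bounded: for $|v|>n$ it grows like $n^{r-1}\log|v|$. Consequently $S_{n,\eps}\notin W^{2,\infty}(\R)$ and the $\te_h$--renormalization produces, on the right-hand side, the nonnegative remainder
\[
-\iint_{Q_T} a(t,x,u,\nabla u)\cdot\nabla G_k(u)\,\te_h'(G_k(u))\,S_{n,\eps}'(G_k(u))\,dx\,ds
\ \sim\ (\log h)\cdot\frac{1}{h}\iint_{\{h\le|G_k(u)|\le 2h\}} a\cdot\nabla u,
\]
and \eqref{ET} only guarantees that the last factor is $o(1)$, not $o(1/\log h)$. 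No available regularity (neither \cref{marc} nor the weighted energy bound from the proof of \cref{lemGkr2}) supplies a rate, so this term is not controlled and the passage to \eqref{phi} is not justified.

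\medskip

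The paper's fix is precisely to avoid this: it replaces \eqref{S1} by the stronger decay
\[
S''(v)\le L(1+|v|)^{-(2+b)}|v|,\qquad b=\frac{(p-q)(N+1)}{N}-1>0\quad(\text{positive since }q<p-\tfrac{N}{N+1}),
\]
and uses the concrete $S'(v)=\int_0^v(1+|y|)^{-(b+2)}|y|\,T_n(y)^{r-1+b}\,dy$. The extra $b>0$ makes $S'$ \emph{bounded} for each fixed $n$ (by $n^{r-1+b}/b$), so \eqref{ET} disposes of the cross term directly. The parameter $b$ is the same one already appearing in \cref{lemGkr2}, and the exponent $r-1+b$ on $T_n$ (rather than your $r-1$) is tuned so that the $\om$--interpolation from \cref{Prop2rp} still yields the $L^r$ norm. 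With this modification the rest of your argument---H\"older, \cref{lemGkr2} for the absorption, the autonomous ODE, Fatou, Vitali, and the splitting \eqref{dec}---goes through verbatim and gives \eqref{starstar}--\eqref{std} with $\si$ replaced by $1$.
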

\begin{proof}
In order to reason as in \cref{Prop2rp}, we assume \eqref{S2} and modify \eqref{S1} asking for a function $S(\cdot)$ satisfying
\begin{equation}\label{S11}
S''(v)\le L(1+|v|)^{-(2+b)}|v|,\qquad b=(p-q)\frac{N+1}{N}-1.
\end{equation}
In this way, we can repeat the argument at the very beginning of the proof of\cref{Prop2rp}, getting \eqref{w11rin}. In particular, \cref{lemGkr2} provides us with an inequality as in \eqref{phi}.\\
We conclude exhibiting a function which, for fixed $n$, verifies both \eqref{S2} and \eqref{S11}:
\[
S'(v)=\int_0^v (1+|y|)^{-(b+2)}|y| T_n(y)^{r-1+b}\,dz,\quad r>1.
\] 
\end{proof}

We conclude this Section observing that, once we have the contraction result of \cref{contr2} as well as the regularizing effect provided by \cref{Proprin1}, then we are allowed to reason as in \cref{asfinen} getting the same long time decays results as in \cref{decinf}.

\section{Further comments}

\subsection{On the notion of solution}

We here point out that we could consider different notions of solutions than \cref{defrin1,defrin2}. Indeed, as shown in \cite{MP}, the \cref{defloc,defrc,defloc2} below are strictly related to the ones previously considered.

\medskip

\begin{definition}\label{defloc} A  function $u$ is a solution to \eqref{P}  if $u(0)= u_0$,
$$u\in C([0,T];L^\si(\Omega))\cap L^p_{loc}(0,T;W^{1,p}_0(\Omega))$$  and $u$ satisfies the weak formulation
\begin{equation*}
  \iint_{Q_T} - u\varphi_t+ a(t,x,u,\N u)\cdot  \N\varphi\,dx\,dt =  \iint_{Q_T} H(t,x, \N u)\varphi\,dx\,dt
\end{equation*}
for every $\varphi\in C^\infty_c((0,T)\times \Omega)$.
\end{definition}


\medskip

\begin{definition}\label{defrc}
A function $u$ is a solution of \eqref{P}  if the regularity condition \eqref{pot} holds, $H(t,x,\N u)\in L^1(Q_T)$ and $u$ satisfies the weak formulation
\[
-\int_{\Omega}u_0\varphi(0)\,dx+\iint_{Q_T}[-\varphi_t u +a(t,x,u,\N u)\cdot  \N \varphi]\,dx\,dt =\iint_{Q_T}H(t,x, \N u)\varphi \,dx\,dt
\]
for every   $\vp\in C^\infty_c([0,T)\times \Omega)$.
\end{definition}

\medskip

\begin{proposition}
We have that \cref{defloc} is equivalent to
\begin{itemize}
\item \cref{defrc} if the gradient growth occurs with rates as in  \eqref{fe} (i.e., we consider the subinterval of \eqref{Q1}  such that $\si\ge 2$);
\item \cref{defrin1} if the gradient growth occurs with rates as in \eqref{Q1}.
 \end{itemize}
\end{proposition}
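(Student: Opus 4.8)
The plan is to prove, in each of the two regimes, the two inclusions between the solution classes separately. The implication from a solution in the sense of \cref{defrc} (resp. \cref{defrin1}) to one in the sense of \cref{defloc} is the soft one; the converse carries the analytic weight and amounts to re-deriving, for a bare local weak solution, the a priori information encoded in \eqref{pot}, together with the summability $H(t,x,\N u)\in L^1(Q_T)$ and the validity of the formulation with test functions non-vanishing at $t=0$.

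\emph{From \cref{defrc} / \cref{defrin1} to \cref{defloc}.} When the rates lie in \eqref{fe} one has $\si\ge 2$, hence $\b=\frac{\si+p-2}{p}\ge 1$ and the chain rule gives $|\N u|\le c\,|\N[(1+|u|)^{\b-1}u]|$, so \eqref{pot} already forces $u\in L^p(0,T;W^{1,p}_0(\Omega))\subset L^p_{loc}(0,T;W^{1,p}_0(\Omega))$, while the continuity $u\in C([0,T];L^\si(\Omega))$ is obtained exactly as in \cref{lemGkp} and \cref{Prop2rp}, combining \cite[Theorem $1.1$]{P1}, the splitting $u=G_k(u)+T_k(u)$ and Vitali's theorem. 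Restricting the test functions of \cref{defrc} to $\vp\in C_c^\infty((0,T)\times\Omega)$, so that $\vp(0)=0$, yields the weak formulation of \cref{defloc}. In the general range \eqref{Q1} one argues instead from \eqref{sr2} with $S=S_m$, $S_m(r)=r$ for $|r|\le m$ and $S_m'$ supported in $\{|r|\le 2m\}$; letting $m\to\infty$, the renormalization remainder $\iint_{\{m\le|u|\le 2m\}}a(t,x,u,\N u)\cdot\N u\,S_m''(u)\vp$ vanishes by the asymptotic energy condition, which, as recalled just after \cref{defrin1}, follows from \eqref{pot}, so the weak formulation is recovered; here $u\in L^p_{loc}(0,T;W^{1,p}_0(\Omega))$ comes from \eqref{pot} together with the interior boundedness of \cref{rmk1}, which keeps $(1+|u|)^{\b-1}$ bounded away from zero on each $[\tau,T]$.

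\emph{From \cref{defloc} to \cref{defrc} / \cref{defrin1}.} Let $u\in C([0,T];L^\si(\Omega))\cap L^p_{loc}(0,T;W^{1,p}_0(\Omega))$ solve the weak formulation with $u(0)=u_0$. Since $|\N u|^q\in L^1_{loc}(Q_T)$ (because $q<p$), the equation extends by density to test functions of the form $\psi(t)\,\te_h(G_k(u))\,S_{n,\eps}'(G_k(u))$ with $\psi\in C_c^\infty(0,T)$, these being time-compactly-supported, bounded and Lipschitz functions of $u$, hence lying in $L^p_{loc}(0,T;W^{1,p}_0(\Omega))$. Running on $(\tau,T)\times\Omega$, $\tau>0$, the $\de$ argument of \cref{lemGkp} with $k$ large (rather than imposing smallness on the datum) and then the iteration of \cref{Prop2rp} and \cref{rmk1} gives $u\in L^\infty_{loc}((0,T]\times\Omega)$, $(1+|u|)^{\b-1}u\in L^p(\tau,T;W^{1,p}_0(\Omega))$ and $\ga|\N u|^q\in L^1((\tau,T)\times\Omega)$, with bounds governed by $\|u(\tau)\|_{L^\si(\Omega)}$. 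Letting $\tau\to 0^+$ and using $\|u(\tau)\|_{L^\si(\Omega)}\to\|u_0\|_{L^\si(\Omega)}$ by continuity, monotone convergence promotes these to \eqref{pot} and $H\in L^1(Q_T)$ on all of $Q_T$. An integration by parts in $t$ — legitimate because $u\in C([0,T];L^\si(\Omega))$, $H\in L^1(Q_T)$ and $u(0)=u_0$ — then upgrades the weak formulation to every $\vp\in C_c^\infty([0,T)\times\Omega)$, with the extra term $-\int_\Omega u_0\vp(0)\,dx$, which is \cref{defrc}; finally, when $\si<2$, inserting $S'(u)\vp$ (truncated by $\te_h(u)$) into this identity and passing to the limit, the remainder vanishing thanks to \eqref{pot}, produces \eqref{sr2} and hence \cref{defrin1}.

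\emph{Main obstacle.} The genuinely delicate point is the second implication: \cref{lemGkp}, \cref{Prop2rp} and \cref{rmk1} cannot be quoted as they stand, since their hypotheses include membership in \cref{defrin1} (in particular \eqref{pot}); their estimates must therefore be re-proved for a merely local weak solution, every test function being justified from $u\in L^p_{loc}(0,T;W^{1,p}_0(\Omega))\cap C([0,T];L^\si(\Omega))$ and, once available, the interior $L^\infty$ bound — hence necessarily on subcylinders $(\tau,T)\times\Omega$, the global statement \eqref{pot} being recovered only at the end by letting $\tau\to 0$. A secondary subtlety, confined to the range $1<\si<2$, is the time integration by parts near $t=0$ under low integrability, which is handled by truncation combined with the continuity in $L^\si(\Omega)$.
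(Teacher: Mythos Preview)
The paper itself omits the proof, pointing instead to \cite[Propositions $2.2$, $2.4$ and $4.2$]{MP} (the case $p=2$) and claiming a straightforward generalisation. Your sketch is precisely what such a generalisation amounts to and matches the strategy of \cite{MP}: the implication from \cref{defrc}/\cref{defrin1} to \cref{defloc} uses \eqref{pot} together with interior boundedness (via \cref{rmk1}) to recover $u\in L^p_{loc}(0,T;W^{1,p}_0(\Omega))$, while the converse re-derives the $\de$ argument and the regularizing estimates of \cref{lemGkp}, \cref{Prop2rp} and \cref{rmk1} on each subcylinder $(\tau,T)\times\Omega$, with bounds controlled by $\|u(\tau)\|_{L^\si(\Omega)}$, and then sends $\tau\to 0^+$ exploiting $u\in C([0,T];L^\si(\Omega))$.
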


\medskip

\begin{definition}\label{defloc2} A  function $u$ is a solution to \eqref{P}  if $u(0)= u_0$,
$$u\in C([0,T];L^1(\Omega))\cap L^p_{loc}(0,T;W^{1,p}_0(\Omega))$$  and $u$ satisfies the weak formulation
\begin{equation*}
  \iint_{Q_T} - u\varphi_t+ a(t,x,u,\N u)\cdot  \N\varphi\,dx\,dt =  \iint_{Q_T} H(t,x, \N u)\varphi\,dx\,dt
\end{equation*}
for every $\varphi\in C^\infty_c((0,T)\times \Omega)$.
\end{definition}


\medskip

\begin{proposition}
We have that \cref{defloc2} is equivalent to \cref{defrin2}.
\end{proposition}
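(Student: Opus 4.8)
The plan is to prove the two implications separately. The direction \cref{defrin2} $\Rightarrow$ \cref{defloc2} should follow rather quickly from facts already established in this section, whereas the direction \cref{defloc2} $\Rightarrow$ \cref{defrin2} is the substantive one, since it amounts to recovering from the weak formulation the a priori estimates characteristic of renormalized solutions with $L^1$ data.

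For \cref{defrin2} $\Rightarrow$ \cref{defloc2}: the continuity $u\in C([0,T];L^1(\Omega))$ and the Marcinkiewicz bounds — hence $a(t,x,u,\N u)\in L^1(Q_T)$ by \eqref{A2} — are exactly the content of \cref{marc}. The local finite-energy regularity $u\in L^p_{loc}(0,T;W^{1,p}_0(\Omega))$ I would obtain from the regularizing effect of \cref{Proprin1} and its $L^\infty$-version (arguing as in \cref{asfinen}), which makes $u(\tau)$ bounded for every $\tau>0$; testing the equation on $(\tau,T)$ with $u$ and absorbing the subcritical term $\ga|\N u|^q$, $q<p$, by Young's inequality and the $L^\infty$-bound then gives $u\in L^p(\tau,T;W^{1,p}_0(\Omega))$. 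To recover the weak formulation of \cref{defloc2} I would insert into \eqref{sr22} the functions $S=S_n$, with $S_n(s)=s$ on $[-n,n]$, $\mathrm{supp}\,S_n'$ compact and $S_n'\to 1$, together with $\vp\in C_c^\infty((0,T)\times\Omega)$, so that $\integrale S_n(u_0)\vp(0)\,dx=0$, and then let $n\to\infty$: the terms $-\iint_{Q_T}S_n(u)\vp_t$ and $\iint_{Q_T}a\cdot S_n'(u)\N\vp$ converge by dominated convergence ($|S_n(u)|\le|u|$ and $a\in L^1(Q_T)$), the right-hand side converges since $H\in L^1(Q_T)$, and the renormalization remainder $\iint_{Q_T}a(t,x,u,\N u)\cdot\N u\,S_n''(u)\vp$ — whose integrand is supported in $\{n\le|u|\le2n\}$ with $|S_n''|\le c/n$ — is bounded by $\tfrac{c}{n}\iint_{\{n\le|u|\le2n\}}a\cdot\N u$ (nonnegative by \eqref{A1}) and vanishes thanks to \eqref{ET}.

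For \cref{defloc2} $\Rightarrow$ \cref{defrin2}: since the admissible test functions in \cref{defloc2} vanish near $t=0$, I would first regularize $u$ in time by a Steklov--Landes type mollification — licit because $u\in C([0,T];L^1(\Omega))$ with $u(0)=u_0$ — and then use $T_k(u)$ and $\psi_n(u)=\tfrac1n(T_{2n}(u)-T_n(u))$ as approximate test functions over $(0,T)$. The choice $T_k(u)$, via the classical argument for renormalized solutions with $L^1$ data and gradient source of subcritical growth $q<p-\tfrac{N}{N+1}$ (which is precisely \eqref{Q2}; see \cite{BlMu,BlP}), yields $u\in\mathcal{T}^{1,p}_0(Q_T)$ with $\|\N T_k(u)\|_{L^p(Q_T)}^p\le c(1+k)$ and the Marcinkiewicz bounds $u\in M^{\frac{p(N+1)-N}{N}}(Q_T)$, $|\N u|\in M^{\frac{p(N+1)-N}{N+1}}(Q_T)$; since $q<\tfrac{p(N+1)-N}{N+1}$, the latter gives $|\N u|^q\in L^1(Q_T)$, i.e.\ $H(t,x,\N u)\in L^1(Q_T)$. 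The choice $\psi_n(u)$, letting $n\to\infty$ and using the $L^1$-bounds on $H$ and on $u_0$ together with absolute continuity of the integral, gives \eqref{ET}. Finally, for \eqref{sr22}: given $S\in W^{2,\infty}(\R)$ with $\mathrm{supp}\,S'\subset[-M,M]$ and $\vp\in C_c^\infty([0,T)\times\Omega)$, one has $\N(S'(u)\vp)=S'(u)\N\vp+S''(u)\vp\,\N T_M(u)\in L^p(0,T;W^{1,p}_0(\Omega))$, so $S'(u)\vp$ is an admissible test function after a density/mollification argument; inserting it into the weak formulation and integrating by parts in time — using that $S(u)\in C([0,T];L^1(\Omega))$ (composition of $u$ with the bounded Lipschitz map $S$) and $S(u(0))=S(u_0)$ — produces exactly \eqref{sr22}.

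The main obstacle I expect is the passage of these a priori estimates through the initial time: the weak formulation of \cref{defloc2} only admits test functions supported away from $\{t=0\}$, so $T_k(u)$ and $\psi_n(u)$ cannot be inserted directly, and the parabolic term must be treated through a Steklov average whose parameter is then sent to zero using $u\in C([0,T];L^1(\Omega))$ and $u(0)=u_0$; this is the classical but delicate mechanism behind renormalized solutions with $L^1$ data (\cite{BlMu,DMOP,Mu}). Once this is in place, together with the gradient regularity \eqref{mar}, the derivation of \eqref{ET} and of \eqref{sr22}, as well as the converse implication, are routine adaptations to the range \eqref{Q2} of the $p=2$ arguments of \cite{MP}.
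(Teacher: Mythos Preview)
Your proposal is aligned with the paper, which in fact omits the proof entirely and simply refers to \cite[Propositions $2.2$, $2.4$ and $4.2$]{MP} as the $p=2$ template to be generalized; your concluding sentence does exactly the same.

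One point in your sketch of \cref{defloc2} $\Rightarrow$ \cref{defrin2} is oversimplified and worth flagging. Testing with $T_k(u)$ on $(\tau,t)$ leaves on the right the term $\ga k\int_\tau^t\!\!\int_\Omega |\N u|^q$, which is finite for each $\tau>0$ (since $u\in L^p_{loc}(0,T;W^{1,p}_0(\Omega))$) but is not known to be bounded uniformly as $\tau\to 0$; you then derive $H\in L^1(Q_T)$ from the Marcinkiewicz bound, which in turn you derive from the $T_k$-estimate, so the argument as written is circular. The way out---and this is precisely what \cite{MP} does and what the present paper encodes in \cref{lemGkr2}---is to test instead with
\[
S'(u)=\Bigl[1-\frac{1}{(1+|G_k(u)|)^b}\Bigr]\text{sign}(u),\qquad b=\frac{(p-q)(N+1)}{N}-1,
\]
on $(\tau,t)$; the $\de$ argument (using only $u\in C([0,T];L^1(\Omega))$ to bound $\sup_t\|G_k(u(t))\|_{L^1(\Omega)}$) then absorbs the gradient term uniformly in $\tau$, yielding estimates that survive the limit $\tau\to 0$. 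From there your chain (truncation bound $\Rightarrow$ Marcinkiewicz $\Rightarrow$ $|\N u|^q\in L^1$ $\Rightarrow$ \eqref{ET} via $\psi_n(u)$ $\Rightarrow$ \eqref{sr22}) is correct. The references \cite{BlMu,BlP} you invoke do not actually treat a subcritical gradient source; the relevant input is the $\de$ argument of \cite{MP} (here \cref{lemGkr2}), not the purely $L^1$ renormalized theory.
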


\medskip

We omit the proof of the Propositions above since they are a simple generalisation of  \cite[Propositions $2.2$, $2.4$ and $4.2$]{MP}.

\end{document}